\documentclass[11pt]{amsart}
\usepackage{amsmath, amssymb, amscd, mathrsfs, slashed, enumerate, url}
\usepackage{xcolor}
\usepackage[all,cmtip]{xy}
\usepackage{multicol}
\usepackage{indentfirst}
\usepackage{latexsym}
\usepackage{bm}
\usepackage{graphicx}
\usepackage{subfigure,esint}
\usepackage{float}
\usepackage{verbatim}
\usepackage{comment}
\usepackage[top=1in, bottom=1in, left=1.25in, right=1.25in]{geometry}
\usepackage{epsfig,dsfont,amsthm,amsfonts,amsbsy}
\usepackage[colorlinks]{hyperref}
\newcommand{\gpp}{\mathfrak{g}_P}
\newcommand{\gee}{\mathfrak{g}_E}

\newcommand{\MGC}{\mathcal{G}_{\mathbb{C}}}

\newcommand{\lan}{\langle }
\newcommand{\ran}{\rangle}

\newcommand{\MD}{\mathcal{D}}

\newcommand{\MM}{\mathcal{M}}

\newcommand{\MO}{\mathcal{O}}

\newcommand{\End}{\mathrm{End}}

\newtheorem{theorem}{Theorem}[section]

\newtheorem{corollary}[theorem]{Corollary}

\newtheorem{definition}[theorem]{Definition}

\newtheorem{lemma}[theorem]{Lemma}

\newtheorem{proposition}[theorem]{Proposition}
\newtheorem*{remark}{Remark}

\newcommand{\MC}{\mathcal{C}}
\newcommand{\Tr}{\mathrm{Tr}}

\newcommand{\st}{\star}
\newcommand{\we}{\wedge}
\newcommand{\pa}{\partial}

\newcommand{\RP}{\mathbb R^+}
\newcommand{\EBE}{extended Bogomolny equations\;}
\newcommand{\ti}{\times}

\newcommand{\Si}{\Sigma}
\newcommand{\bz}{\bar{z}}
\newcommand{\vp}{\varphi}

\newcommand{\ME}{\mathcal{E}}
\newcommand{\da}{\dagger}
\newcommand{\al}{\alpha}

\newcommand{\na}{\nabla}
\newcommand{\ep}{\epsilon}
\newcommand{\hA}{\widehat{A}}

\newcommand{\hP}{\widehat{\Phi}}

\newcommand{\calC}{\mathcal C}

\newcommand{\RR}{\mathbb R}
\newcommand{\CC}{\mathbb C}

\newcommand{\NP}{\mathrm{NP}}

\newcommand{\mft}{\mathfrak{t}}

\newcommand{\MG}{\mathcal{G}}

\newcommand{\md}{\mathrm{mod}}

\newcommand{\SL}{\mathrm{SL}}

\newcommand{\NPK}{\mathrm{NPK}}
\newcommand{\mfd}{\mathfrak{d}}

\newcommand{\Hit}{\mathrm{Hit}}


\newcommand{\mfg}{\mathfrak{g}}

\newcommand{\mbC}{\mathbb{C}}
\newcommand{\Me}{{M_{\ep}}}
\newcommand{\Mod}{{\mathrm{mod}}}
\newcommand{\nk}{\mathbf{k}}
\newcommand{\mEBE}{\mathrm{EBE}}
\newcommand{\Pinf}{P_{\infty}}
\newcommand{\PNP}{P_{\infty}^{\NP}}
\newcommand{\PNPK}{P_{\infty}^{\NPK}}
\newcommand{\mKW}{\mathrm{KW}}
\newcommand{\SUn}{\mathrm{SU(n)}}


\usepackage[utf8]{inputenc}
\usepackage[english]{babel}
\usepackage{fancyhdr}

\begin{document}
\title[Classification of Nahm pole solutions]{Classification of Nahm
  pole solutions of the Kapustin-Witten equations on $S^1\ti\Si\ti\RP$}
\author{Siqi He} 
\address{Simons Center for Geometry and Physics, StonyBrook University\\Stonybrook, NY 11794}
\email{she@scgp.stonybrook.edu}
\author{Rafe Mazzeo}
\address{Department of Mathematics, Stanford University\\Stanford,CA 94305}
\email{rmazzeo@stanford.edu}

\begin{abstract}
In this note, we classify all solutions to the $\SUn$ Kapustin-Witten equations on $S^1\ti\Si \ti \RP$,
where $\Si$ is a compact Riemann surface, with Nahm pole singularity at $S^1\ti\Si \ti \{0\}$. 
We provide a similar classification of solutions with generalized Nahm pole singularities along a simple divisor (a ``knot'')
in $S^1\ti\Si \ti \{0\}$.
\end{abstract}

\maketitle

\section{Introduction}
An important conjecture by Witten \cite{witten2011fivebranes} posits a relationship between the Jones polynomial of a knot
and a count of solutions to the Kapustin-Witten equations.  More specifically, let $K$ be a knot in $X = \RR^3$ or $S^3$,
and fix an $\SUn$ bundle $P$ over $X \ti \RP$ with associated adjoint bundle $\gpp$.  The Kapustin-Witten (KW) equations
\cite{KapustinWitten2006} are equations for a pair $(A, \Phi)$, where $A$ is a connection on $P$ and $\Phi$ is a $\gpp$-valued
$1$-form. We augment these with the singular Nahm pole boundary conditions at $y=0$ (where $y$ is a linear variable on
the $\RP$ factor), and with an additional singularity imposed along $K \ti \{0\}$ . The conjecture states that an appropriate count of
solutions to the KW equations with these boundary conditions computes the Jones polynomial. One can define these equations
when $X$ is a more general Riemannian $3$-manifold, and in that case this gauge-theoretic enumeration 
may lead to new $3$-manifold invariants when $K = \emptyset$, or to a generalization of the Jones polynomial
for $K$ lying in a general $3$-manifold, see \cite{Witten2014LecturesJonesPolynomial, gaiotto2012knot}.

The core of all of this is to investigate the properties of the moduli space of solutions.  Significant partial
progress has been made,  see \cite{MazzeoWitten2013,MazzeoWitten2017,He2017,RyosukeEnergy,taubes1982self},
as well as Taubes' recent advance \cite{Taubescompactness} regarding compactness properties.

As usual in gauge theory, it is reasonable to seek to understand a dimensionally reduced version of this problem.
Thus suppose that $X = S^1\ti \Si$, where $\Si$ is a compact Riemann surface of genus $g$.  Solutions which
are invariant in the $S^1$ direction are solutions of the so-called extended Bogomolny equations.  General existence
theorems for solutions of these dimensionally reduced equations were proved in \cite{HeMazzeo2017,MazzeoHe18}.
In the present paper, we adapt arguments from \cite{MazzeoWitten2017} and prove that every solution to
the KW equation on $S^1\ti \Si \ti \RP$ satisfying Nahm pole boundary conditions is necessarily invariant in
the $S^1$ direction.  This leads to a complete classification of solutions in this special case.

\begin{theorem}
Consider the Kapustin-Witten equations on $S^1\ti \Si \ti \RP_y$ for fields satisfying the Nahm pole boundary
condition at $y=0$ (with no knot singularity) and which converge to a flat $\SL(n,\CC)$ connection as $y\to\infty$.  
\begin{itemize}
\item [i)]  There are no solutions if $g=0$;
\item [ii)] There is a unique solution (up to unitary gauge equivalence) if $g=1$;
\item[iii)] If $g>1$, there exists a solution if and only if the limiting flat connection as $y \to \infty$
  lies in the Hitchin section in the $\SL(n,\CC)$
 Hitchin moduli space, and in that case, this solution is unique up to unitary gauge.
\end{itemize}
\end{theorem}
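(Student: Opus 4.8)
The plan is to reduce the theorem to the classification of Nahm pole solutions of the extended Bogomolny equations on $\Si\ti\RP$. The substantive new input, and the step I expect to be the main obstacle, is to show that every solution $(A,\Phi)$ of the Kapustin--Witten equations on $M=S^1\ti\Si\ti\RP$ satisfying the Nahm pole boundary condition at $y=0$ (no knot) and converging to a flat $\SL(n,\CC)$-connection as $y\to\infty$ is invariant under the rotation action of $S^1$ on the first factor. Granted this, $(A,\Phi)$ descends to a solution of the extended Bogomolny equations on the $3$-manifold $\Si\ti\RP$, with the Nahm pole condition along $\Si\ti\{0\}$ and the same flat limit at $y\to\infty$; the converse implication, that such a reduced solution lifts to an $S^1$-invariant Kapustin--Witten solution, is immediate. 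The trichotomy in $g$ then follows from the classification of reduced solutions, whose existence part is supplied by \cite{HeMazzeo2017,MazzeoHe18}.

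For the $S^1$-invariance I would adapt the Weitzenb\"ock / energy-identity argument of \cite{MazzeoWitten2017}. Write $M=S^1_\theta\ti Z$ with $Z=\Si\ti\RP$, and split $A=A_\theta\,d\theta+\hat A$, $\Phi=\Phi_\theta\,d\theta+\hat\Phi$ along this product. The (renormalized) Kapustin--Witten energy $\mathcal E(A,\Phi)=\int_M\big(|F_A-\Phi\we\Phi|^2+|d_A\Phi|^2+|d_A^{\st}\Phi|^2\big)$ decomposes as $\int_{S^1}\mathcal E_Z(\hat A,\hat\Phi,\Phi_\theta)\,d\theta$ plus a manifestly non-negative integral $\mathcal Q$ built from the covariant $\theta$-derivatives of the fields and the $d\theta$-components of $F_A$, $d_A\Phi$ and $d_A^{\st}\Phi$. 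On the other hand, the Bogomolny completion of the square shows that for a solution $\mathcal E(A,\Phi)$ equals a boundary integral $\mathcal B$ over $\partial M$ --- of complex Chern--Simons type in $A+i\Phi$, suitably renormalized at the Nahm pole --- while the same identity on each slice $Z\ti\{\theta\}$ gives $\mathcal E_Z\ge\mathcal B_Z$, with equality precisely on extended Bogomolny solutions. Hence $\mathcal Q=\mathcal E(A,\Phi)-\int_{S^1}\mathcal E_Z\,d\theta\le\mathcal B-\int_{S^1}\mathcal B_Z\,d\theta$, and the crux is to show that $\mathcal B=\int_{S^1}\mathcal B_Z\,d\theta$, that is, that this boundary term does not feel the $\theta$-dependence of the solution. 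Near $y=0$ this is because the divergent and finite parts of $\mathcal B$ are determined by the leading Nahm pole model, which is $S^1$-invariant; near $y=\infty$ it is because $\mathcal B$ there is a topological quantity attached to the limiting flat connection, and such a flat connection on $S^1\ti\Si$ is gauge-equivalent to one pulled back from $\Si$. Granting this, $\mathcal Q=0$; being a sum of squares, all covariant $\theta$-derivatives and all $d\theta$-field strengths vanish, and a final gauge transformation placing $A$ in temporal gauge along $S^1$ makes $(A,\Phi)$ literally $\theta$-independent. The analytic points that need care --- and where the structure theory of Nahm pole solutions from \cite{MazzeoWitten2013,MazzeoWitten2017} enters --- are the a priori regularity and decay needed to justify the integrations by parts and the renormalization of $\mathcal B$ at $y=0$, and the verification that the limit at $y=\infty$ is gauge-equivalent to a pullback from $\Si$.

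Once $S^1$-invariance is established, the problem reduces to classifying solutions of the extended Bogomolny equations on $\Si\ti\RP$ with Nahm pole along $\Si\ti\{0\}$ and prescribed flat $\SL(n,\CC)$ limit at $y\to\infty$. The Nahm pole at $y=0$ forces the associated complex flat connection to degenerate, in the directions transverse to $\RP$, to an oper on $\Si$; opers correspond via the non-abelian Hodge correspondence to points of the Hitchin section of the $\SL(n,\CC)$ Hitchin moduli space of $\Si$ (equivalently, to points of the Hitchin base $\bigoplus_{k=2}^{n}H^0(\Si,K_\Si^{\,k})$), and the $y$-evolution interpolates between this oper and the flat connection at $y=\infty$. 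Therefore, when $g>1$, a solution exists if and only if the limiting flat connection lies in the Hitchin section --- existence being exactly the content of \cite{HeMazzeo2017,MazzeoHe18} --- and it is unique up to unitary gauge: two solutions with the same Nahm pole behavior and the same limit give, after a complex gauge transformation to a common underlying Higgs bundle, two Hermitian metrics solving the same Hermitian--Einstein-type second-order equation with identical boundary data at both ends, and a maximum-principle argument forces them to agree.

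It remains to treat the degenerate genera. For $g=0$ the holomorphic data that a Nahm pole solution would require does not exist --- on $\mathbb P^1$ there is no stable Higgs bundle underlying the Hitchin section, the obstruction being the failure of $2g-2\ge 0$ --- so there are no solutions. For $g=1$ the relevant holomorphic moduli degenerate to a single point, reflecting the rigidity of the flat uniformizing structure on a torus, so the reduced equations have exactly one solution up to unitary gauge, again by \cite{HeMazzeo2017}. Combining the three cases with the $S^1$-invariance statement completes the proof.
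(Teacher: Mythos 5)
Your overall strategy coincides with the paper's: the heart of the matter is $S^1$-invariance, proved by expanding $\int_M|\mathrm{KW}(A,\Phi)|^2$ into the $S^1$-invariant (extended Bogomolny) part plus the squares $|\na_1\phi|^2+|B_A|^2+|\na_1\phi_1|^2$ plus a cross term that reduces to boundary integrals, which vanish at $y=0$ by the polyhomogeneous regularity of Nahm pole solutions and at $y=\infty$ because a flat $\SL(n,\CC)$ connection on $S^1\ti\Si$ is gauge-equivalent to a pullback from $\Si$; the classification for $g=0$ and $g>1$ is then exactly the Kobayashi--Hitchin correspondence of \cite{HeMazzeo2017,MazzeoHe18}, as you say. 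The one place your argument has a real soft spot is $g=1$: you assert that "the relevant holomorphic moduli degenerate to a single point" and invoke \cite{HeMazzeo2017} again, but on a torus the canonical bundle is trivial, the Hitchin-section Higgs bundle $(\MO^{\oplus n},\vp)$ is only semistable (degree-zero $\vp$-invariant subbundles exist), and the stability hypotheses underlying the existence/uniqueness theory you are quoting fail. The paper sidesteps this by a separate direct argument on $T^3\ti\RP$: a different energy identity (with a $\langle\mathrm{Ric}(\phi),\phi\rangle$ term that vanishes for the flat metric) forces $F_A=0$, $\na_A^\perp\Phi=0$ and reduces the problem to the Nahm equations in $y$, whose unique Nahm pole solution is $(0,e/y)$ with $e$ a parallel dreibein. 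You would need some such substitute argument (or an appeal to the $\RR^3\ti\RP$ uniqueness theorem of \cite{MazzeoWitten2013}) to close the $g=1$ case; the rest of the proposal matches the paper.
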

Part ii) here largely comes from the uniqueness theorem in \cite{MazzeoWitten2013} for solutions
on $\RR^3 \ti \RP$. The Hitchin section in part iii) is also known as the Hitchin component of
the $\mbox{SL(n, $\mathbb R$)}$ representation variety, cf.\ \cite{hitchin1992lie}. We recall that there
are in fact $n^{2g}$ equivalent Hitchin components, depending on the different choices of spin structure.

Next suppose that the knot $K \subset S^1 \ti \Si$ is a union of `parallel' copies of $S^1$, $K = \sqcup_i(S^1 \ti \{p_i\})$.
The Nahm boundary conditions at a knot require that we specify a weight $\nk^i$, i.e., an $(n-1)$-tuple of positive integers
$(k_1^i, \ldots, k_{n-1}^i) \in \mathbb N^{n-1}$ for each component $K_i$. 
\begin{theorem}
Consider the Kapustin-Witten equations on $S^1 \ti \Si \ti \RP_y$ for fields which satisfy the Nahm pole boundary condition
with knot singularities with weights $\nk^i$, as described above, along $K\ti\{0\}$, where $K = \sqcup_i (S^1 \ti \{p_i\})$, 
and which converge to a flat $\SL(n,\CC)$ connection, corresponding to a stable Higgs pair $(\ME,\vp)$, as $y \to \infty$. 
\begin{itemize}
\item [i)] There are no solutions when $g=0$; 
\item [ii)] If $g>1$ and $\rho$ is irreducible, there exists a solutions with these boundary conditions at $K$ if and only if there exists a holomorphic line subbundle $L$ of
  $\ME$ such that the data set $\mfd (\ME,\vp,L)=\{(p_i,\nk_i)\}$,
\end{itemize}
\end{theorem}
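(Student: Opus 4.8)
The plan is to follow the same two-step strategy as in the proof of the knot-free Theorem above: first show that every Kapustin--Witten solution with these boundary conditions is invariant in the $S^1$-direction, and then deduce the classification from the theory of the extended Bogomolny equations with generalized Nahm pole singularities developed in \cite{HeMazzeo2017,MazzeoHe18}. For the first step, given a solution $(A,\Phi)$ on $S^1\ti\Si\ti\RP_y$ with Nahm pole boundary condition and knot singularities of weight $\nk^i$ along $K\ti\{0\}$, differentiating the fields along the $S^1$-factor produces a solution of the linearized equations; one then writes the associated Weitzenböck-type energy identity and integrates by parts over $S^1\ti\Si\ti\RP$, exactly as in \cite{MazzeoWitten2017}. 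The interior terms are non-negative, so $S^1$-invariance follows once one shows that every boundary contribution vanishes: at $y\to\infty$ this uses the exponential convergence to the flat $\SL(n,\CC)$ connection of $(\ME,\vp)$, on the smooth part of $\{y=0\}$ it uses the standard Nahm pole asymptotics, and along $K$ it requires the precise leading behavior of $(A,\Phi)$ in a tubular neighborhood of the knot.

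The new analytic point, and the step I expect to be the main obstacle, is the boundary analysis along $K$. Because $K=\sqcup_i(S^1\ti\{p_i\})\ti\{0\}$ is itself $S^1$-invariant, the relevant model solution---the generalized Nahm pole model attached to the weight $\nk^i$---is $S^1$-invariant, so the $S^1$-derivative of $(A,\Phi)$ vanishes on the model and can only come from the lower-order terms in the expansion of $(A,\Phi)$ transverse to $K$. What is needed is a polyhomogeneous expansion of solutions near $K$ sharp enough to conclude that the cross terms in the integration by parts over a small tube about $K$ tend to zero as the tube radius shrinks; this is where the bulk of the technical work lies, and it is the place where the argument of \cite{MazzeoWitten2017} must genuinely be extended rather than merely quoted. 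Granting this, $S^1$-invariance of $(A,\Phi)$ is established.

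Once $(A,\Phi)$ is $S^1$-invariant it descends to a solution of the extended Bogomolny equations on $\Si\ti\RP_y$ with Nahm pole boundary condition at $y=0$, generalized Nahm pole singularities of weight $\nk^i$ at the points $(p_i,0)$, and convergence to the stable Higgs pair $(\ME,\vp)$ as $y\to\infty$. When $g>1$ and $\rho$ is irreducible, $(\ME,\vp)$ is stable, and the existence and uniqueness theory of \cite{HeMazzeo2017,MazzeoHe18}---applied here in its $\SL(n,\CC)$, multi-component-knot form---identifies such solutions, up to unitary gauge, with the holomorphic data of a line subbundle $L\subset\ME$ whose non-invariance divisor $\mfd(\ME,\vp,L)$, the divisor recording with $(n-1)$-tuple multiplicities the failure of $L,\vp L,\dots,\vp^{\,n-1}L$ to span $\ME$, equals the prescribed $\{(p_i,\nk_i)\}$. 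Existence then reduces to solving a Hermitian--Einstein-type equation for a metric adapted to $L$, which is solvable precisely under this matching condition by the usual continuity/variational argument using stability of $(\ME,\vp)$, and uniqueness up to unitary gauge comes from convexity of the associated energy functional; this proves part ii). For part i), the same reduction shows a solution would descend to an extended Bogomolny solution on $S^2\ti\RP_y$ converging to a stable $\SL(n,\CC)$ Higgs pair on $\mathbb P^1$, but by Grothendieck's splitting theorem the top line-bundle summand of any holomorphic bundle on $\mathbb P^1$ is $\vp$-invariant and destabilizing, so no such stable pair exists and the hypothesis is vacuous.
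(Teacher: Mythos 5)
Your proposal follows the paper's proof essentially verbatim: a Weitzenb\"ock/integration-by-parts identity forces $S^1$-invariance of any solution, after which part ii) reduces to the Kobayashi--Hitchin correspondence for the extended Bogomolny equations with knot singularities from \cite{HeMazzeo2017,MazzeoHe18}, and part i) to the nonexistence of stable Higgs pairs on $\mathbb{P}^1$. The one step you flag as requiring a genuine extension of \cite{MazzeoWitten2017} --- the boundary contribution along $K$ --- is in fact handled in the paper by directly quoting the polyhomogeneous expansions already established there: the model solution is $S^1$-invariant with $B_{A^{\mathrm{mod}}}=0$, so $\Tr(B_A\wedge\phi)=\MO(R^{-1})$ near the knot, which vanishes against the area element $\sim R\,dR\,dx_1$ as the tube radius shrinks.
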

\noindent The definition of data sets $\mfd (\ME,\vp,L)$ is recalled in Section \ref{datasetdefinition}.

\begin{remark}
We do not discuss the case $g=1$ here. Indeed, it is not clear what the correct existence theory 
for solutions with knot singularities should be in this case.
  \end{remark}

\begin{corollary}
There exists, up to unitary gauge, at most $n^{2g}$ solutions to the KW equations which converge to the given flat
connection associated to $(\ME, \vp)$ and with Nahm singularity along $K = \sqcup S^1 \ti \{p_i\}$. 
\end{corollary}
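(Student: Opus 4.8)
The plan is to deduce the corollary from Theorem~2, reducing the count of solutions first to a count of holomorphic line subbundles of $\ME$ and then to a count of $n$-th roots of a single fixed line bundle on $\Si$. Throughout one takes $g\ge 2$ and $\rho$ irreducible: for $g=0$ Theorem~2 produces no solutions, so the bound is vacuous, and $g=1$ is excluded by the Remark above; one also fixes the Nahm weights $\nk_i$ along $K=\sqcup_i(S^1\ti\{p_i\})$, which are part of the prescribed boundary data. By Theorem~2 --- more precisely, by the correspondence between unitary-gauge-equivalence classes of solutions and line subbundles established in its proof --- such solutions converging to the flat connection attached to the stable pair $(\ME,\vp)$ are matched with the holomorphic line subbundles $L\subset\ME$ satisfying $\mfd(\ME,\vp,L)=\{(p_i,\nk_i)\}$. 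It therefore suffices to show that there are at most $n^{2g}$ such subbundles.

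The second step extracts the relevant rigidity from the data-set condition. Given a valid $L$, the Higgs field $\vp\colon\ME\to\ME\oti K$ together with $L$ generates a complete flag $0=F_0\subset L=F_1\subset F_2\subset\cdots\subset F_n=\ME$, where $F_{j+1}$ is the saturation of the subsheaf generated by $F_j$ and the part of $\ME$ selected by $\vp(F_j)$. With $M_j:=F_j/F_{j-1}$, the Higgs field induces nonzero morphisms $M_j\to M_{j+1}\oti K$, hence isomorphisms $M_{j+1}\cong M_j\oti K(-D_j)$ for effective divisors $D_j$; prescribing where and to what order the iterated $\vp$-images of $L$ degenerate is precisely the content of the data set $\mfd(\ME,\vp,L)=\{(p_i,\nk_i)\}$, so the $D_j$ are supported on $\{p_i\}$ and are determined by the weights. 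Since $(\ME,\vp)$ is an $\SL(n,\CC)$ pair we have $\det\ME\cong\MO$, and multiplying these isomorphisms together forces $L^{\oti n}\cong\Lambda_0$, where $\Lambda_0=\Lambda_0(\Si,\{\nk_i\})$ is assembled only from $K$ and the $D_j$. Consequently $[L]$ lies in the fibre of multiplication-by-$n$ on $\mathrm{Jac}(\Si)$ over $[\Lambda_0]$; this fibre, if nonempty, is a torsor over $\mathrm{Jac}(\Si)[n]\cong(\mathbb{Z}/n\mathbb{Z})^{2g}$, and so has at most $n^{2g}$ elements. This is the same $n$-torsion ambiguity that accounts for the $n^{2g}$ Hitchin sections in Theorem~1.

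The last step, which I expect to be the main obstacle, is to promote this count of isomorphism classes to a count of honest subbundles, i.e.\ to show that two valid subbundles $L_1,L_2\subset\ME$ with $L_1\cong L_2$ must coincide, so that the assignment sending a solution to $[L]$ is injective. The mechanism should be a rigidity statement: the data set, which pins down exactly where and to what order the flag $F_\bullet$ generated by $L$ degenerates under $\vp$, together with the stability of $(\ME,\vp)$, ought to force $F_\bullet$ --- and in particular its bottom piece $L=F_1$ --- to be recovered from the isomorphism type alone, any competing choice producing a subobject that violates the slope inequalities for $(\ME,\vp)$. Making this precise, for instance by checking that the relevant $\Hom$-space is one-dimensional modulo the ideal of $\{p_i\}$ so that the embedding $L\hookrightarrow\ME$ realizing the prescribed data set is unique up to scale, is where the precise definition of $\mfd$ from Section~\ref{datasetdefinition} and the stability hypothesis must be used in earnest; this is the only genuinely delicate point. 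Granting it, each of the at most $n^{2g}$ admissible isomorphism classes supports at most one valid subbundle, hence at most one solution, and the corollary follows.
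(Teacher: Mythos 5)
This is essentially the paper's argument. The paper also reduces, via the knot Kobayashi--Hitchin correspondence, to counting line subbundles $L\subset\ME$ with $\mfd(\ME,\vp,L)=\{(p_i,\nk_i)\}$, and it pins down $L^n$ more directly than you do: since $f_n=1\we\vp\we\cdots\we\vp^{n-1}$ is a section of $L^{-n}\oti\we^n\ME\oti K^{n(n-1)/2}\cong L^{-n}\oti K^{n(n-1)/2}$ with zero divisor exactly $D=\sum_i p_i\sum_j k_j^i$, one gets $L^n\cong L_D^{-1}\oti K^{n(n-1)/2}$ in one line, without building the flag $F_\bullet$. (In your flag version the induced maps go $M_j\to M_{j+1}\oti K$, so the correct relation is $M_{j+1}\cong M_j\oti K^{-1}(D_j)$ rather than $M_j\oti K(-D_j)$; the sign slip does not affect the conclusion that $L^{\oti n}$ is a fixed line bundle.) From there both arguments conclude identically: $[L]$ is determined up to tensoring by an $n$-torsion line bundle, of which there are $n^{2g}$. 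As for the ``genuinely delicate point'' you isolate at the end --- whether two distinct subbundles of $\ME$ in the same isomorphism class could both realize the prescribed data set, which matters because for a stable pair the only automorphisms are scalars and hence distinct subbundles give distinct moduli points --- the paper does not address this at all: its proof simply equates the number of possible solutions with the number of isomorphism classes of $L$, remarking only in the opposite direction that some of the $n^{2g}$ classes may fail to embed. So you have omitted nothing that the published proof supplies; you have instead flagged a step at which the published argument is itself informal.
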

The knot points $p_i$ and the weights $\nk^i$ determine the divisor $D=\sum_ip_i\sum_jk_j^i$. 
\begin{theorem}
  If $\deg D$ is not divisible by $n$, there exist no Nahm pole solutions to the KW equations with knot singularity along $K$.
  In particular, there are no solutions to the $\mathrm{SU}(2)$ \EBE with only a single knot singularity of weight $1$. 
\end{theorem}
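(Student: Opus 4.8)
The plan is to derive this from the classification theorem for knot singularities stated above, together with a short Chern class computation on $\Si$; the $\mathrm{SU}(2)$ assertion will then be immediate. When $g=0$ there are no solutions at all, so the statement is vacuous; we may therefore assume $g>1$ and that the limiting representation is irreducible, so that by that theorem a Nahm pole solution with knot data $\{(p_i,\nk^i)\}$ exists if and only if there is a holomorphic line subbundle $L\subset\ME$ with $\mfd(\ME,\vp,L)=\{(p_i,\nk^i)\}$. It therefore suffices to prove that no such $L$ can exist once $n\nmid\deg D$.

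To see this I would unwind the definition of the data set from Section~\ref{datasetdefinition}. A line subbundle $L$ with nonempty data set is cyclic for $\vp$ and generates an increasing filtration $L=V_1\subset V_2\subset\cdots\subset V_n=\ME$ by subbundles, with line bundle graded pieces $G_j=V_j/V_{j-1}$ (so $G_1=L$) and nonzero induced maps $\psi_j\colon G_j\to G_{j+1}\otimes K_\Si$ for $1\le j\le n-1$. Letting $D_j\ge 0$ be the zero divisor of $\psi_j$, the weight $\nk^i$ at $p_i$ is recorded from the multiplicities of the $D_j$ at $p_i$, and the total knot divisor becomes a fixed non-negative combination $D=\sum_{j=1}^{n-1}(n-j)\,D_j$ (equivalently $\sum_j j\,D_j$, depending on the convention). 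On the other hand $\psi_j\ne 0$ gives $G_{j+1}\otimes K_\Si\cong G_j\otimes\mathcal O(D_j)$, hence $G_j\cong L\otimes K_\Si^{1-j}\otimes\mathcal O(D_1+\cdots+D_{j-1})$; since $(\ME,\vp)$ is an $\SL(n,\CC)$ Higgs bundle we have $\det\ME\cong\mathcal O$, and comparing degrees in $\det\ME=\bigotimes_{j}G_j$ gives
\[ n\deg L\;=\;\binom{n}{2}(2g-2)-\sum_{j=1}^{n-1}(n-j)\deg D_j. \]
As $\binom{n}{2}(2g-2)=n(n-1)(g-1)$ is divisible by $n$, this forces $\sum_j (n-j)\deg D_j\equiv 0\pmod n$, and therefore $\deg D\equiv 0\pmod n$. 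In the rank-two case this is transparent: $G_2=\ME/L\cong L^{-1}$, the second fundamental form is a nonzero section of $L^{-2}\otimes K_\Si$ with zero divisor $D$, so $\mathcal O(D)\cong L^{-2}\otimes K_\Si$ and $\deg D=2(g-1-\deg L)$ is even.

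This proves the theorem: if $n\nmid\deg D$ then no admissible line subbundle $L$ exists, hence there is no Nahm pole solution; and for $\mathrm{SU}(2)$ with a single knot of weight $1$ we have $D=p$, so $\deg D=1$ is odd and there are no solutions to the \EBE. The step I expect to require the most care is not this arithmetic but matching it to the definitions in Section~\ref{datasetdefinition}: one must confirm that the knot weights are indeed recorded as the stated combination of the defect divisors $D_j$ and that these enter $\det\ME$ in exactly the pattern $\sum_j(n-j)\deg D_j$, so that no exotic choice of $L$, of the extensions assembling the flag $V_\bullet$, or of how a given knot multiplicity is apportioned among the graded pieces, can evade the congruence. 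Once that bookkeeping is pinned down, the rest is the elementary Chern class computation above.
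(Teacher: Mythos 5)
Your proposal is correct and follows essentially the same route as the paper: reduce via the Kobayashi--Hitchin classification to the existence of a line subbundle $L\subset\ME$ with the prescribed data set, then use $\det\ME\cong\MO$ to get $n\deg L=n(n-1)(g-1)-\deg D$, forcing $n\mid\deg D$. The only cosmetic difference is that you re-derive the relation $\MO(D)\cong L^{-n}\otimes K^{\frac{n(n-1)}{2}}$ through the $\vp$-filtration and its graded pieces, whereas the paper reads it off directly from $Z(f_n)=D$ and $f_n\in H^0(L^{-n}\otimes K^{\frac{n(n-1)}{2}})$.
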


\textbf{Acknowledgements.} The first author would like to thank Simon Donaldson for numerous helpful discussions.
The second author was supported by the NSF grant DMS-1608223.

\section{The Kapustin-Witten Equations and the Nahm Pole Boundary Conditions}
We begin with some background materials on the Kapustin-Witten equations \cite{KapustinWitten2006} and Nahm pole boundary conditions:
\subsection{The Kapustin-Witten Equations}
Let $(M,g)$ be a Riemannian $4$-manifold, and $P$ an $\SUn$ bundle over $M$ with the adjoint bundle $\gpp$. The Kapustin-Witten equations 
for a connection $A$ and a $\gpp$-valued $1$-form $\Phi$ are
\begin{equation}
\begin{split}
F_A-\Phi\we\Phi+\st d_A\Phi=0,\ \ d_A^{\st}\Phi=0.
\label{KW}
\end{split}
\end{equation}
When $M$ is closed, all solutions to the KW equations are flat $\SL(n,\CC)$ connections \cite{KapustinWitten2006}. Indeed, in this setting, a Weitzenb\"ock formula shows that solutions must satisfy the decoupled equations
\begin{equation}
  F_A-\Phi\we\Phi=0,\;d_A\Phi=0,\;d_A\st\Phi=0,
  \label{flatconnection}
\end{equation}
or equivalently, $F_{\mathcal A} = 0$ where $\mathcal A := A + i\Phi$ and $d_A \st \Phi = 0$. 

Following \cite{witten2011fivebranes,Witten2014LecturesJonesPolynomial}, the main case of interest here is when $M=X\ti\RP$,
where $X$ is a closed 3-manifold and $\RP:=(0,\infty)$ with linear coordinate $y$. From now on, we fix a Riemannian metric
on $X$ with volume $1$, and endow $X\ti\RP$ with the product metric.

\subsubsection{The Nahm Pole Boundary Condition}
Let $G:=\SUn$, with Lie algebra $\mfg$ and choose a principal embedding $\varrho:\mathfrak{su}(2)\to\mfg$ as well as a
global orthonormal coframe $\{\mathfrak{e}^*_a,\ a=1,2,3\}$ of $T^*X$, which is possible since $X$ is parallelizable.
Next, choose a section $e$ of $T^*X \otimes \gpp$, $e = \sum \mathfrak t_a e_a^*$ for some everywhere nonvanishing
sections $\mathfrak{t}_a$, $a=1,2,3$, of the adjoint bundle $\gpp$ which satisfy the commutation relations
$[\mathfrak{t}_a,\mathfrak{t}_b]=\epsilon_{abc}\mathfrak{t}_c$, and which lie in the conjugacy class of the 
image of $\varrho$. This choice of $e$ is called a {\it dreibein form}.

\begin{definition}
With all notation as above, the pair $(A,\Phi)$ satisfies the \textbf{Nahm pole boundary condition} at $y=0$ if, in some gauge,
$A=A_0 + \MO(y^{\ep})$ and $\Phi=\frac{e}{y}+\MO(y^{-1+\ep})$ for some $\ep>0$.
\end{definition}

The rationale for this name is that the dimensional reduction of the KW equations to $\RP$ are the Nahm equations, and in
this case $(0,\frac{e}{y})$ is a `standard' solution of the Nahm equations with a so-called pole at $y=0$.  We remark
also that as proved in \cite{MazzeoWitten2013}, it is sufficient to assume that $A = \MO(y^{-1+\ep})$, since the regularity
theory for solutions shows that there is automatically a leading coefficient $A_0$. 
\subsubsection{The Nahm Pole Boundary Condition with Knot Singularities}
A generalization of this boundary condition incorporates certain `knot' singularities at $y=0$. Before describing this,
recall from \cite{witten2011fivebranes} the model solution when $G = \mathrm{SU(2)}$ and $X=\RR^3 =
\mathbb{R}\ti\mathbb{C}$ with coordinate $(x_1,z = x_2 + ix_3)$. Introduce spherical coordinate $(R,s,\theta)$ in the $(z,y)$
half-space: $z=re^{i\theta}$, $R=\sqrt{r^2+y^2}$, $y=R\sin s$, $r= |z| = R\cos s$. The model knot is the line $(x_1,0,0)\subset
\mathbb{R}^3 \ti \{0\}$. Writing $\Phi=\phi_zdz+\phi_{\bz} d\bz+\phi_1 dx_1+\phi_ydy$, the model solution of weight $k$ takes the form
\begin{equation}
\begin{split}
A&=-(k+1)\cos^2 s\frac{(1+\sin s)^k-(1-\sin s)^k}{(1+\sin s)^{k+1}-(1-\sin s)^{k+1}}d\theta \left(\begin{array}{cc}
\frac i2& 0\\
0& \frac i2
\end{array}\right),\\
\phi_z&=\frac{2(k+1)e^{ik\theta}\cos^k s}{R(1+\sin s)^{k+1}-R(1-\sin s)^{k+1}}\left(\begin{array}{cc}
0& 1\\
0& 0
\end{array}\right),\\
\phi_1&=\frac{k+1}{R}\frac{(1+\sin s)^{k+1}+(1-\sin s)^{k+1}}{(1+\sin s)^{k+1}-(1-\sin s)^{k+1}}\left(\begin{array}{cc}
\frac i2& 0\\
0& \frac i2
\end{array}\right),\;\phi_y=0.
\end{split}
\end{equation}
There is a less explicit model solution when $G= \SUn$, due to Mikhaylov \cite{Mikhaylov2012solutions}. The weight in that case
is an $(n-1)$-tuple $\nk=(k_1,\cdots,k_{n-1})$, and the corresponding solution is denoted $(A^{\md}_{\nk},\Phi^{\md}_{\nk})$.
As in the case $n=2$, $|A^{\md}_{\nk}|\sim R^{-1}s^0$ and $|\Phi^{\md}_\nk|\sim R^{-1}s^{-1}$ near $z=0, y=0$. 

In general, given a knot $K\subset X\ti\{0\}$, introduce local coordinates $(x_1,z = x_2 + i x_3 ,y)$
near $K$, where $K = \{z = y = 0\}$ and $t$ is a coordinate along $K$. We can use 
cylindrical coordinates $(R,s,\theta, x_1)$ near $K$, where $y=R\sin s$, $z=R\cos s e^{i\theta}$. Then, as in \cite{witten2011fivebranes,MazzeoWitten2017}, we make the 
\begin{definition}
With $P$ and $G$ as above, and $K \subset X$ a knot, then $(A,\Phi)$ satisfies \textbf{Nahm pole boundary condition with knot $K$ and
weight $\nk$} if in some gauge 
	\begin{itemize}
		\item [i)] $(A,\Phi)$ satisfies the Nahm pole boundary condition.away from knots $K$, 
		\item [ii)] near $K$, $A=A^{\md}_{\nk}+\MO(R^{-1+\ep}s^{-1+\ep})$, $\Phi=\Phi^{\md}_{\nk}+\MO(R^{-1+\ep}s^{-1+\ep}).$
	\end{itemize}
\end{definition}

\subsubsection{The Boundary Condition at $y=\infty$}
We must also impose an asymptotic boundary condition at the cylindrical end, as $y \to \infty$. We change to a temporal gauge,
i.e., so that $A_y  \equiv 0$. Then writing $\Phi=\phi+\phi_ydy$ (so $\phi$ includes the $\phi_1$ part), the KW equations
become flow equations
\begin{equation}
\begin{split}
&\pa_y A=\st d_A\phi+[\phi_y,\phi],\\
&\pa_y\phi=d_A\phi_y+\st(F_A-\phi\we\phi),\\
&\pa_y\phi_y=d_A^{\st}\phi.
\label{flowequation}
\end{split}
\end{equation}

We shall assume that $(A,\Phi)$ converges to a "steady-state" ($y$-independent) solution as $y \to \infty$, which
is then necessarily a flat $SL(n,\CC)$ connection. The $y$-independence, together with the equations \eqref{flatconnection}
yield that $[\phi,\phi_y]=d_A\phi_y=0$; this shows that if $\phi_y \neq 0$, then $\mathcal A$ is reducible.
\begin{proposition}
  If $(A,\Phi)$ satisfies the KW equations together with Nahm pole boundary conditions (possibly with knots), and converges to an irreducible
  flat $\SL(n,\CC)$ connection as $y \to \infty$, then $\phi_y \equiv 0$.
  \label{vanphiy}
\end{proposition}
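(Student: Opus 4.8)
The plan is to derive a Bochner‑type identity exhibiting $|\phi_y|^2$ as a subharmonic function on $X\ti\RP$, and then to run a maximum principle using the prescribed behaviour at $y=0$, along $K$, and at $y=\infty$.

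\emph{Step 1: a differential identity for $\phi_y$.} I would work in the temporal gauge, so that the equations read as in \eqref{flowequation}. Differentiating the third equation $\pa_y\phi_y=d_A^{\st}\phi$ in $y$ and substituting the expressions for $\pa_yA$ and $\pa_y\phi$ furnished by the first two equations, one gets a second‑order expression for $\phi_y$. Besides $\pa_y^2\phi_y-d_A^{\st}d_A\phi_y$ it contains a collection of terms involving $d_A\phi$ (equivalently the curvature $F_A-\phi\we\phi$, via the first flow equation); the key observation is that after applying the Hodge star identities on the three‑manifold these terms cancel in pairs, leaving only a zeroth‑order term quadratic in $\Phi$:
\[
\pa_y^2\phi_y-d_A^{\st}d_A\phi_y+\st\bigl[\phi\we\st[\phi,\phi_y]\bigr]=0 ,
\]
that is, $\phi_y$ is annihilated by the Laplace‑type operator attached to the complexified connection $\mathcal{A}=A+i\Phi$ on $M=X\ti\RP$. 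The Bochner (Kato) formula then gives, at every interior point,
\[
\tfrac12\,\Delta|\phi_y|^2=-|d_A\phi_y|^2-|[\Phi,\phi_y]|^2=-|d_{\mathcal{A}}\phi_y|^2\le 0 ,
\]
where $\Delta=\nabla^{*}\nabla\ge 0$ and $d_A$ denotes the covariant derivative on all of $M$; thus $|\phi_y|^2$ is subharmonic on $M$ away from $K$, the defect being exactly $|d_{\mathcal{A}}\phi_y|^2$. This cancellation is really the only computation in the argument.

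\emph{Step 2: boundary behaviour.} As $y\to\infty$ the hypothesis that $(A,\Phi)$ converges to an irreducible flat $\SL(n,\CC)$ connection $\mathcal{A}_\infty$ forces $|\phi_y|^2\to 0$ uniformly on the slices: by the discussion preceding the proposition the limit $\phi_y^{\infty}$ is $A_\infty$‑parallel and commutes with $\phi_\infty$, hence with $\mathcal{A}_\infty$, so it lies in the commutant of an irreducible $\SL(n,\CC)$ connection, which meets $\su(n)$ only in $0$. As $y\to 0$ away from $K$, the Nahm pole condition gives only $\phi_y=\MO(y^{-1+\ep})$ a priori, but the leading term $e/y$ has no $dy$‑component and the principal $\su(2)$ inside $\su(n)$ has no trivial summand; consequently the regularity theory of \cite{MazzeoWitten2013} upgrades this to $\phi_y=\MO(y^{\gamma})$ for some $\gamma>0$, so $|\phi_y|^2\to 0$ on $\{y=0\}\setminus K$. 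Finally, near $K$ the model solutions (the explicit $\mathrm{SU}(2)$ family written above, and Mikhaylov's $\SUn$ solutions) also have vanishing $dy$‑component, and the refined regularity near knots of \cite{MazzeoWitten2017} shows that $\phi_y$ stays bounded — indeed tends to $0$ — as one approaches $K$.

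\emph{Step 3: conclusion.} By Step 2, $u:=|\phi_y|^2$ is a bounded, nonnegative, subharmonic function on $X\ti\RP$ which is bounded near $K\ti\{0\}$ — a set of measure zero in the face $\{y=0\}$ — has vanishing boundary trace on $\{y=0\}$, and tends to $0$ as $y\to\infty$; the maximum principle on the half‑cylinder, with decay imposed at the cylindrical end, then yields $u\le 0$, i.e. $\phi_y\equiv 0$. (Equivalently, integrate the identity of Step~1 over $X\ti\RP$: the boundary contributions at $y=0$, along $K$, and at $y=\infty$ vanish by the asymptotics of Step~2, so $d_{\mathcal{A}}\phi_y\equiv 0$; in particular $\pa_y\phi_y=0$, $d_A\phi_y=0$ and $[\Phi,\phi_y]=0$, so $\phi_y$ is $y$‑independent and $A$‑parallel and commutes with $\Phi$, hence its constant value lies in the commutant of the irreducible $\mathcal{A}_\infty$ and is therefore $0$.) I expect the main obstacle to be precisely the boundary analysis of Step~2: converting the crude a priori bounds $\phi_y=\MO(y^{-1+\ep})$ near $\{y=0\}$ and $\phi_y=\MO(R^{-1+\ep}s^{-1+\ep})$ near $K$ into the genuine vanishing, respectively boundedness, required by the maximum principle. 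This is where the polyhomogeneous regularity theory of \cite{MazzeoWitten2013,MazzeoWitten2017} is indispensable, and it is also the reason the knot case is more delicate than the knotless one: away from $K$ the estimate $\phi_y=\MO(y^{\gamma})$ is elementary, whereas along $K$ one must analyse the normal operator of the linearised problem in the blown‑up coordinates $(R,s,\theta)$.
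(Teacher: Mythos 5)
Your argument is correct and coincides with the paper's: the paper handles the limit at $y\to\infty$ exactly as in your Step 2 (a parallel section commuting with an irreducible flat $\SL(n,\CC)$ connection must vanish) and then invokes the ``well-known vanishing theorem'' of \cite{taubes2013compactness,He2017}, which is precisely the Bochner identity plus maximum principle you reconstruct in Steps 1 and 3. The delicate boundary analysis near $K$ that you flag is likewise delegated to the polyhomogeneous regularity theory of \cite{MazzeoWitten2013,MazzeoWitten2017} in the cited proofs, so there is no substantive difference between the two arguments.
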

Indeed, the hypothesis and the remark above shows that $\lim_{y\rightarrow +\infty}\phi_y=0$.  A well-known vanishing theorem
then implies that $\phi_y \equiv 0$, see \cite[Page 36]{taubes2013compactness} or \cite[Corollary 4.7]{He2017} for a proof.
We assume henceforth, as in \cite{Taubescompactness}, that $\phi_y
\equiv 0$.

We now define the moduli spaces
\begin{equation}
\begin{split}
\MM_{\mathrm{NP}}^{\mKW}:=\{(A,\Phi): \ \mKW(A,\Phi)=0, \  (A,\Phi) \mbox{ converges to a flat }
\SL(n,\CC)\;connection\\ \mbox{as} \ y\to\infty\ \mbox{and}\ \mbox{ satisfies the Nahm Pole boundary condition at} \ y=0\}/\MG_0,
\end{split}
\end{equation}
and
\begin{equation}
\begin{split}
  \MM^{\mKW}_{\mathrm{NPK}}  :=\left\{ (A,\Phi): \mKW(A,\Phi)=0,\ (A,\phi,\phi_1)\ \mbox{ satisfies the Nahm pole} \right. \\
\mbox{ boundary condition with knot $K$ and converges to a flat } \\SL(n,\CC) \ \textrm{connection as} \ y\to\infty \}/\mathcal{G}_0,
\label{complexgeometrymodulispace}
\end{split}
\end{equation}
where $\MG_0$ is the space of gauge transformations preserving the boundary conditions.

\subsection{The Regularity theorems of Nahm pole Solutions}

We next recall the regularity theory for this singular boundary condition at $y=0$, as developed in \cite{MazzeoWitten2013,MazzeoWitten2017}.
Still working on $X\ti\RP$, fix a smooth background connection $\nabla$, and write $\na_x, \na_y$ for the covariant derivatives in the $x \in X$ and
$y$ directions. 
\begin{theorem}{\cite{MazzeoWitten2013, MazzeoWitten2017}}
\label{expansions} 
Let $(A,\Phi)$ satisfy the KW equations with Nahm pole boundary condition, and write $A = A_0 + a$, $\Phi=\frac{e}{y}+b$ near $y=0$ where $a =
\MO(y^\ep)$, $b = \MO(y^{-1 + \ep})$. Then $a$ and $b$ are polyhomogeneous. 
Furthermore, the leading term 
$A_0 $ of $A$ must correspond, under the intertwining provided by the dreibein $e$, with the Levi-Civita connection on $X$.


If $(A,\Phi)$ satisfies the Nahm pole boundary condition with a knot singularity along $K$ of weight $\nk$ at $y=0$,
then writing
$A=A^{\md}_\nk+a,\;\Phi=\Phi^{\md}_{\nk}+b$, where $(A^{\md}_\nk,\Phi^{\md}_\nk)$ is the model solution and $a, b = \MO(R^{-1 + \ep} s^{-1+\ep})$,
then $a, b$ are polyhomogeneous, i.e., have expansions in positive powers of $R$ and $s$, and nonnegative integer
powers of $\log R$ and $\log s$, with coefficients smooth in the tangential variables. These expansions are of product type at the corner
$R = s= 0$. 
\end{theorem}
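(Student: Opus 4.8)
The plan is to reduce the statement to an application of elliptic edge regularity for a suitably gauge-fixed version of the Kapustin--Witten operator, following the strategy of \cite{MazzeoWitten2013,MazzeoWitten2017}. First I would break the gauge invariance: the system \eqref{KW} is overdetermined-elliptic only modulo the action of $\MG_0$, so I augment the linearization with a Coulomb-type gauge-fixing condition imposed relative to the relevant model. Writing the solution as a perturbation of the model---$(A_0,\tfrac{e}{y})$ in the pole case, $(A^{\md}_\nk,\Phi^{\md}_\nk)$ in the knot case---the combined operator ``linearized $\mKW$ $\oplus$ gauge fixing'' becomes a genuinely elliptic first-order operator $L$ acting on the perturbation $(a,b)$. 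The Nahm pole contributes a summand $\tfrac{1}{y}\ad_e$ (and its analogue near the knot), so that $L$ is not merely elliptic but a degenerate-elliptic \emph{edge} operator in the sense of Mazzeo, with the boundary $y=0$ (respectively the knot $K$) as its edge.

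Next I would analyze the indicial operator of $L$. Freezing coefficients at $y=0$ and testing $y^\gamma$ reduces the problem to an indicial family $I(\gamma)$ of fibre endomorphisms whose invertibility fails exactly at the indicial roots. Because $e$ lies in the conjugacy class of the principal $\su(2)$, the zeroth-order term $\ad_e$ decomposes $\gpp$ into irreducible $\su(2)$-summands, and the indicial roots are computed explicitly from the eigenvalues of $\ad_e$ on each summand; they form a discrete subset of $\RR$. This is the key structural fact: the a priori hypotheses $a=\MO(y^\ep)$, $b=\MO(y^{-1+\ep})$ place $(a,b)$ strictly above the relevant indicial roots, so no obstruction to the expansion sits at the boundary. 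Granting edge-ellipticity and this indicial data, polyhomogeneity of the bounded solution follows from the standard edge bootstrap: one writes $L(a,b)=Q(a,b)$, where $Q$ collects the quadratic terms $b\we b$, $a\we b$, etc.; since products of polyhomogeneous distributions on the edge are again polyhomogeneous, one iterates, each step improving the expansion by the indicial gap, with nonnegative integer powers of $\log y$ appearing exactly when indicial roots coincide or differ by integers. The claim that $A_0$ is intertwined with the Levi-Civita connection is read off from the subleading balance of \eqref{KW}, which forces $d_{A_0}e=0$; since $e=\sum\mathfrak t_a\mathfrak e_a^*$ with $[\mathfrak t_a,\mathfrak t_b]=\epsilon_{abc}\mathfrak t_c$, this pins $A_0$ down to the connection making the orthonormal coframe parallel, i.e.\ the Levi-Civita connection under the dreibein intertwining.

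For the knot case the new feature is the corner $R=s=0$, where the Nahm pole face $s=0$ meets the knot face $R=0$. Here I would work in the double-edge, product-type calculus: $L$ is elliptic as an edge operator along \emph{each} face, and crucially its normal operators at the two faces commute up to lower order, so that the model near the corner is of product type. One computes two families of indicial roots, one per face, from the model $(A^{\md}_\nk,\Phi^{\md}_\nk)$, whose asymptotics $|A^{\md}_\nk|\sim R^{-1}s^0$ and $|\Phi^{\md}_\nk|\sim R^{-1}s^{-1}$ give the leading exponents there. The assumed decay $a,b=\MO(R^{-1+\ep}s^{-1+\ep})$ again lands strictly inside the allowed range at both faces, and the iterative bootstrap now produces a \emph{product-type} polyhomogeneous expansion: expansions in nonnegative powers of $R$ and $s$ with nonnegative integer powers of $\log R$ and $\log s$, the product structure reflecting the commuting normal operators.

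The main obstacle is the corner analysis. Establishing that $L$ is elliptic uniformly up to $R=s=0$---and that the two normal operators genuinely commute to leading order, so that the expansion is of product type rather than merely polyhomogeneous on each face separately---requires care, as does verifying the non-resonance condition at the corner (that none of the two-parameter lattice of combined exponents $R^{\gamma_1}s^{\gamma_2}$ collides with a model exponent in a way that would stall the iteration). A further subtlety is that the Mikhaylov model $(A^{\md}_\nk,\Phi^{\md}_\nk)$ for $G=\SUn$ is given only implicitly, so the indicial roots there must be extracted from its $\su(2)$ weight structure rather than from a closed form. Once these points are secured, the nonlinear iteration and the passage to product-type polyhomogeneity are routine within the edge calculus.
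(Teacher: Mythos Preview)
The paper does not prove this theorem at all: it is quoted as a result from \cite{MazzeoWitten2013,MazzeoWitten2017}, stated with those citations in the theorem header, and followed only by an explanatory remark on what polyhomogeneity means and which weak consequences of it are actually needed later. So there is no ``paper's own proof'' to compare against.

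That said, your sketch is a faithful outline of the strategy in the cited references: gauge-fix to make the linearized KW operator genuinely elliptic, recognize it as a $0$-elliptic (edge) operator with the $y^{-1}\ad_e$ term, compute indicial roots via the principal $\su(2)$ decomposition (Kostant), and run the nonlinear bootstrap to obtain the polyhomogeneous expansion; near the knot, pass to the blown-up space and use the iterated-edge/product-type calculus of \cite{MazzeoWitten2017}. The identification of $A_0$ with the Levi-Civita connection via $d_{A_0}e=0$ is also the mechanism used there. One small correction: in the knot case the indicial roots along the $R=0$ face are not computed explicitly from an $\su(2)$ weight decomposition but are characterized only as the spectrum of an auxiliary operator on the front face (as the paper's remark also notes), so your phrase ``extracted from its $\su(2)$ weight structure'' overstates what is available. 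Otherwise, since the present paper merely cites the result, your outline goes well beyond what the paper itself provides.
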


\begin{remark}
We recall that a function (or section of some bundle) $u$ is polyhomogeneous on $X \ti \RP$ at $X \ti \{0\}$ if,
near any boundary point, 
\[
u \sim \sum_j \sum_{\ell = 0}^{N_j}  u_{j\ell}(x) y^{\gamma_j} (\log y)^\ell\ \ \mbox{as}\ y \to 0.
\]
Here $x$ is a local coordinate on $X$ and each coefficient $u_{j\ell}(x)$ is $\calC^\infty$, while $\gamma_j$ is a sequence of complex numbers
with real parts tending to infinity. In our setting, the $\gamma_j$ are explicit real numbers calculated in \cite{MazzeoWitten2013}.

The second polyhomogeneity statement, near $K$, may be phrased similarly once we introduce the blowup $[X \ti \RP; K \ti \{0\}]$. This
is a new manifold with corners of codimension two obtained by replacing the knot $K$ at $y=0$ with its inward-pointing spherical normal
bundle. The cylindrical coordinates $(x_1, R,  s, \theta)$ are nonsingular on this space, and the two boundaries are defined by $\{R=0\}$
and $\{s=0\}$. A function or section $u$ is polyhomogeneous on this space if it admits a classical expansion as described above near
each point in the interior of the codimension one boundaries, while near the corner $\{R = s = 0\}$ it admits a product type expansion
\[
u \sim \sum_{j, k}  \sum_{\ell=0}^{N_j} \sum_{m = 0}^{M_j}  u_{j k \ell m}(x_1, \theta) s^{\gamma_j} R^{\mu_k} (\log s)^\ell (\log R)^m,
\]
where as before, each coefficient function is smooth in the variables $t,\theta$ along the corner. In our setting the $\gamma_j$ and $N_j$
are the same numbers as in the previous expansion, while the $\mu_k$ are real numbers calculated (somewhat less explicitly, i.e., only
in terms of spectral data of some auxiliary operator) in \cite{MazzeoWitten2017}.

The paper \cite{Heexpansion18} considers various refined aspects of the higher terms in the expansion in $y$. 

We have described this precise regularity for the sake of completeness, but in fact, we do not use the full power of these expansions here, but only the estimates
\begin{equation*}
  \begin{split}
& |\na_x^{\ell}\na_y^m a|_{\MC^0}\leq C_{\ell,m}y^{2-m+\ep},\ \ |\na_x^{\ell}\na_y^m b|_{\MC^0}\leq C_{l,m}y^{1-m+\ep}, \\
& |\na_{x_1}^{\ell}\na_R^{m}\na_s^n a|_{\MC^0}\leq C_{\ell,m,n}R^{-\ep-m}s^{2-\ep-n},\ \ |\na_{x_1}^{\ell}\na_R^{m}\na_s^n b|_{\MC^0}
\leq C_{\ell,m,n}R^{-\ep-m}s^{1-\ep-n}
\end{split}
\end{equation*}
for any $\ep>0$ and any $\ell,m,n \in \mathbb N$.
\end{remark}


\section{The Extended Bogomolny Equations}
We next recall the dimensional reduction of the Kapustin-Witten equations from $S^1\ti\Si\ti \RP$ to $\Si \ti \RP$, obtained by
considering fields invariant in the $S^1$ direction. This was previously studied in \cite{HeMazzeo2017,MazzeoHe18}, and is
closely related to the Atiyah-Floer approach to counting Kapustin-Witten solutions \cite{gaiotto2012knot}.

Assume on the one hand that the bundle $P$ on $S^1 \ti \Si \ti \RP$ is pulled back from $\Si \ti \RP$.  Changing
notation slightly, given a solution
$(\hA,\hP)$ of the KW equations on $S^1 \ti \Si \ti \RP$, choose a gauge for which the $S^1$ component of $A$ vanishes
and $A_y \equiv 0$ as well. By virtue of the Nahm pole boundary conditions at $y=0$ and the asymptotic condition as
$y \to \infty$, Proposition \ref{vanphiy} gives that $\phi_y = 0$, but we cannot gauge away the $S^1$ component $\phi_1$.
Thus the remaining fields are $(\hA_\Si, \hP_1, \hP_\Si)$. We regard $\hA_\Si$ as a connection $A$ on $\Si$, and
write $\hP_1 = \phi_1$, $\hP_\Si = \phi$. These remaining fields satisfy the {\bf extended Bogomolny equations}
\begin{equation}
\label{Eq_EBE}
\begin{split}
&F_A-\phi\we\phi-\st d_A\phi_1=0\\
&d_A\phi +\st [\phi,\phi_1]= 0,\\
&d_A^{\st}\phi= 0.
\end{split}
\end{equation}

On the other hand, given a solution $(A, \phi, \phi_1)$ of the \EBE on $\Si \ti \RP$, then denoting by $\pi:S^1\ti \Si\ti \RP\to \Si\ti \RP$
the natural projection, we define the connection $\hA = \pi^* A$ and Higgs field $\hP = \pi^{\st}\phi+\pi^{\st}\phi_1dx_1$.
It is straightforward to check that $(\hA,\hP)$ satisfies the KW equations.

Let $D=\{(p_i,\nk_i=(k_1^i,\cdots,k_{n-1}^i))\}$ where for each $i$, $k_j^i$ are non-negative integers with
at least one of them nonzero. 
\begin{definition}
Let $(A,\phi,\phi_1)$ be a solution to the \EBE on $\Si \ti \RP$.
\begin{itemize}
\item [i)] The fields $(A,\phi,\phi_1)$ satisfy the \textbf{Nahm pole boundary condition} if the corresponding fields $(\hA,\hP)$
 satisfy the Nahm pole boundary condition on $S^1 \ti \Si \ti \RP$. 
\item [ii)] Similarly, $(A,\phi,\phi_1)$ satisfies the \textbf{Nahm pole boundary condition with knot data $D$} if the
corresponding pull back fields $(\hA,\hP)$ satisfy the Nahm pole boundary condition with knots at $K_i:=S^1\ti \{p_i\}$ with
weight $\nk_i$.
\end{itemize} 
\end{definition} 

The moduli space we shall consider are:
\begin{equation}
\begin{split}
\MM_{\mathrm{NP}}^{\mEBE}:=\{(A,\phi,\phi_1): \ \mathrm{EBE}(A,\phi,\phi_1)=0, \  (A,\phi,\phi_1) \mbox{ converges to a flat }
\mathrm{SL}(n,\mathbb{C}) \\ \mbox{connection as} \ y\to\infty\ \mbox{and}\ \mbox{ satisfies the Nahm Pole boundary condition 
	at} \ y=0\}/\MG_0,
\end{split}
\end{equation}
and
\begin{equation}
\begin{split}
\MM^{\mEBE}_{\mathrm{NPK}} & :=\left\{ (A,\phi,\phi_1): \mathrm{EBE}(A,\phi,\phi_1)=0,\ (A,\phi,\phi_1)\ \textrm{ satisfies the Nahm pole } \right. \\
& \textrm{boundary condition with knot and converges to a flat} \\ & \left. \mathrm{SL}(n,\mathbb{C}) 
\ \textrm{connection as} \ y\to\infty \right\}/\mathcal{G}_0,
\label{complexgeometrymodulispace}
\end{split}
\end{equation}
where $\MG_0$ is the gauge transformations that preserve the boundary condition.

\subsection{Hermitian-Yang-Mills Structure}
In \cite{gaiotto2012knot,witten2011fivebranes}, it is observed that the extended Bogomolny equations have a Hermitian-Yang-Mills
structure. By this we mean the following.   Let $E$ be complex vector bundle of rank $n$ over $\Si\ti\RP$ with $\det E=0$.
A choice of Hermitian metric $H$ on $E$ induces an $\mathrm{SU}(n)$ structure on this bundle, and we denote by
$\gee$ the associated adjoint bundle.  Writing
\[
d_A=\na_2dx_2+\na_3dx_3+\na_ydy,\ \mbox{and}\ \ \phi=\phi_2dx_2+\phi_3dx_3 =\frac{1}{2}(\vp_z dz+\vp_{\bz} d\bz),
\]
we define the operators 
\begin{equation}
\begin{split}
&\MD_1=(\na_2+i\na_3)d{\bz}=(2\pa_{\bz}+A_1+iA_2)d{\bz},\\
&\MD_2=\operatorname{ad}\vp=[\vp,\cdot ]=[(\phi_2 - i \phi_3) \, dz ,\cdot ], \\
&\MD_3=\na_y-i\phi_1=\pa_y+A_y-i\phi_1. 
\end{split}
\end{equation}
Their adjoints with respect to $H$ are denoted $\MD_i^{\dag_H}$. The \EBE can then be written in the elegant form
\begin{equation}
\begin{split}
&[\MD_i,\MD_j]=0, \ \ i,j=1,2,3,\\
& \frac{i}{2}\Lambda \left([\MD_1, \MD_1^{\da_H}]+[\MD_2,\MD_2^{\da_H}]\right)+ [\MD_3,\MD_3^{\da_H}]=0,
\label{Eq_EBE_Hermitian}
\end{split}
\end{equation}
where $\Lambda:\Omega^{1,1}\to\Omega^0$ is the inner product with the K\"ahler form (normalized as $(i/2)dz\we d\bz$ 
when the metric on $\Si$ is flat). 

The action $\MD_i\to g^{-1}\MD_ig$ of the gauge group $\MG$ preserves the Hermitian metric; the
complex gauge group is denoted $\MGC$. The smaller system $[\MD_i,\MD_j]=0$ is invariant under $\MGC$,
while the full set of equations \eqref{Eq_EBE_Hermitian} is invariant only under $\MG$.  The final equation is a
real moment map condition. Following Donaldson \cite{donaldson1985anti} and Uhlenbeck-Yau 
\cite{uhlenbeck1986existence},  geometric data from the $\MGC$-invariant equations play an
important role in understanding the moment map equation.

\subsection{Higgs Bundles and Flat Connections}
The appearance of Higgs bundles over $\Si$ in this story is motivated by the fact that the
$y$-independent versions of the equations of \eqref{Eq_EBE}, when in addition $\phi_1 = 0$, are
simply the Hitchin equations. 

Recall that a Higgs bundle over $\Si$ is a pair $(\ME,\vp)$ where $\ME$ is a holomorphic bundle of rank $n$
with $\det \ME=0$ and $\vp\in H^0(\End(\ME)\otimes K)$. A Higgs pair (which is an alternate phrase for Higgs bundles)
$(\ME,\vp)$ is called stable if for any holomorphic subbundle $V$ with $\varphi(V)\subset V\otimes K$, we have $\deg (V)<0$,
and polystable if it is a direct sum of stable Higgs pairs. 

Setting $\MD_3 = 0$ in the \EBE (or alternately, considering only the equations for $\MD_1$ and $\MD_2$ on each slice $\Si_y := \Si \ti \{y\}$),
we obtain the Hitchin equations: 
\begin{equation}
F_H+[\vp,\vp^{\st_H}]=0,\;\bar{\pa}\vp=0.
\label{Hitchinequation}
\end{equation}
The initial term $F_H$ is the curvature of the Chern connection $\nabla_H$ associated to $H$ and the holomorphic structure, and
$\vp^{\st_H}$ is the adjoint with respect to $H$.  Irreducibility of the fields $(A,\vp+\vp^{\st_H})$ is defined in the obvious way.
One may regard \eqref{Hitchinequation} as an equation for the fields $(A, \vp)$ or else for the Hermitian metric $H$;
we consider $H$ as the variable here.
\begin{theorem}{\cite{Hitchin1987Selfdual}}
\label{nonabelianhodge}
For any Higgs pair $(\ME,\vp)$ on $\Si$, there exists an irreducible solution $H$ to the Hitchin equations 
if and only if this pair is stable, and a reducible solution if and only if it is polystable.
\end{theorem}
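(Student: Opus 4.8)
The plan is to prove this as a Hitchin--Kobayashi correspondence for Higgs bundles on a curve, following the Donaldson--Uhlenbeck--Yau template and carrying the Higgs field $\vp$ through every step. Throughout I normalize $\deg\ME=0$ and $\Tr\vp=0$, and require the Hermitian metric $H$ to induce the fixed metric on $\det\ME$, so that $(A,\vp+\vp^{\st_H})$ has structure group $\SUn$. First I would dispose of the necessity direction: given a solution $H$ of \eqref{Hitchinequation} and a holomorphic subbundle $V\subset\ME$ with $\vp(V)\subset V\oti K$, let $\pi_V$ be the $H$-orthogonal projection onto $V$ and invoke the Chern--Weil subbundle formula
\[
\deg V \;=\; \frac{i}{2\pi}\int_{\Si}\Tr(\pi_V F_H) \;-\; \frac{1}{2\pi}\,\|\pab\pi_V\|_{L^2}^2 .
\]
Pairing \eqref{Hitchinequation} with $\pi_V$ and using the $\vp$-invariance of $V$, the curvature integral becomes a nonpositive multiple of $\|[\vp,\pi_V]\|_{L^2}^2$, so $\deg V\le 0$, with equality only if $\pab\pi_V=0$ and $[\vp,\pi_V]=0$, i.e.\ $V$ is a holomorphic, metric and $\vp$-invariant direct summand. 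Hence an irreducible $H$ forces $\deg V<0$ for every proper $\vp$-invariant $V$, i.e.\ $(\ME,\vp)$ is stable; and a reducible $H$ exhibits $(\ME,\vp)$, by iterating the splitting, as an orthogonal direct sum of Higgs subbundles each of which, by the same computation, is stable, i.e.\ $(\ME,\vp)$ is polystable.

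For the substantive direction, assume $(\ME,\vp)$ is stable. The plan is to fix a background metric $H_0$, parametrize metrics by $H=H_0e^{s}$ with $s$ trace-free and $H_0$-Hermitian, form the Hermitian--Einstein--Higgs tension $e := i\Lam\big(F_H + [\vp,\vp^{\st_H}]\big)$ (which by \eqref{Hitchinequation} vanishes exactly when $H$ solves the Hitchin equation, and which is the gradient of the Donaldson functional on the space of metrics), and run its downward heat flow
\[
H^{-1}\pa_t H \;=\; -2\,e, \qquad H(0)=H_0 .
\]
Short-time existence is routine parabolic theory. For long-time existence I would apply the maximum principle to the evolution equation for $|e|$, bounding $\sup_\Si|e|$ on finite time intervals and keeping the metrics in a fixed quasi-isometry class, so that the flow persists for all $t\ge 0$.

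The crux, and where stability is essential, is the behavior as $t\to\infty$, and I would set it up as a dichotomy. Either $\sup_\Si|s(t)|$ stays bounded, in which case parabolic and then elliptic bootstrapping along a subsequence produces a smooth limiting metric solving \eqref{Hitchinequation}, necessarily irreducible because a parallel $\vp$-invariant splitting of the limit would violate stability via the necessity direction. Or $\sup_\Si|s(t)|\to\infty$, in which case an Uhlenbeck--Yau argument extracts from the renormalized endomorphisms $s/\|s\|$ a nonzero weakly holomorphic projection, which by the Uhlenbeck--Yau regularity theorem represents a coherent subsheaf $V\subset\ME$; an additional estimate on $[\vp,\pi_V]$ lets one take $V$ to be $\vp$-invariant, and bookkeeping of the Donaldson functional along the flow forces $\deg V\ge 0$, contradicting stability. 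I expect this last step --- extracting a $\vp$-invariant destabilizing subsheaf, together with the a priori estimates ($L^1$ control of $|s|$ forcing pointwise control, second fundamental form bounds) that make the Uhlenbeck--Yau machinery run in the Higgs-twisted setting --- to be the main obstacle.

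Finally, if $(\ME,\vp)$ is only polystable, I would write it as an orthogonal direct sum of stable Higgs subbundles, solve \eqref{Hitchinequation} on each summand by the stable case, and take the direct-sum metric, obtaining a reducible solution; conversely, the necessity direction already shows a reducible solution forces polystability, which closes the equivalence.
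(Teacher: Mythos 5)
This statement is not proved in the paper at all: it is quoted verbatim as Hitchin's theorem from \cite{Hitchin1987Selfdual}, so there is no in-text argument to compare against. Your proposal is a correct outline of the standard proof of the Hitchin--Kobayashi correspondence for Higgs bundles on a curve, but it follows the Donaldson--Simpson route (Donaldson functional, metric heat flow $H^{-1}\pa_t H=-2e$, long-time existence by the maximum principle on $|e|$, and the dichotomy on $\sup|s|$ with an Uhlenbeck--Yau-type extraction of a $\vp$-invariant destabilizing subobject) rather than Hitchin's original one, which minimizes the Yang--Mills--Higgs functional over the space of pairs $(A,\Phi)$ in a fixed complex gauge orbit and uses Uhlenbeck weak compactness of a minimizing sequence. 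The two approaches buy different things: Hitchin's argument stays on the gauge-theory side and integrates naturally with his Morse-theoretic study of the moduli space, while the heat-flow/Donaldson-functional argument is the one that generalizes to higher-dimensional bases (Simpson) and makes the role of stability transparent through the convexity of the functional. Your necessity direction via the Chern--Weil formula for $\deg V$ and the sign of $\int\Tr(\pi_V\Lam[\vp,\vp^{\st_H}])$ for $\vp$-invariant $V$ is exactly right, including the equality case giving a parallel $\vp$-invariant splitting. One simplification you are entitled to and do not use: since $\dim_{\CC}\Si=1$, a weakly holomorphic $L^2_1$ projection already defines a genuine holomorphic subbundle by elliptic regularity, so the full Uhlenbeck--Yau coherent-subsheaf regularity theorem you flag as the main obstacle is not needed here; the genuinely delicate point on a curve is only the $\vp$-invariance of the limiting projection and the degree bookkeeping along the flow, both of which are handled in Simpson's paper.
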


To any solution $H$ of \eqref{Hitchinequation} we associate the flat $\SL(n,\CC)$ connection $D = \nabla_H + \vp + \vp^{\st_H}$.
This determines, in turn, a representation $\rho: \pi_1(\Si) \to \mathrm{SL}(n,\CC)$ which is well-defined up to conjugation. Irreducibility
of the solution is the same as irreducibility of the representation, while complete reducibility corresponds to the fact that $\rho$ is reductive.
The map from flat connections back to solutions of the Hitchin system is defined as follows: first find a harmonic metric, cf.\, \cite{corlette1988},
which determines a decomposition $D = D^{\mathrm{skew}} + D^{\mathrm{Herm}}$ into skew-Hermitian and Hermitian parts.  After that,
the further decomposition $D^{\mathrm{Herm}} = \vp + \vp^{\star_H}$ determines $\vp$, and hence the Higgs bundle $( (D^{\mathrm{skew}})^{0,1}, \vp)$.

Denoting by $\MM_{\mathrm{Higgs}}:=\{(\ME,\vp)\}^{\mathrm{stable}}/\MGC$ the moduli space of stable $\SL(n,\mathbb{C})$ Higgs bundle, we are
then led to define
\begin{equation}
\label{Pinf}
\PNP:\MM^{\mEBE}_{\NP}\to\MM_{\mathrm{Higgs}},\;\PNPK:\MM^{\mEBE}_{\mathrm{NPK}}\to\MM_{\mathrm{Higgs}};
\end{equation}
this is the map which assigns to a solution $(A,\phi,\phi_1)$ of the \EBE its limiting flat connection, and then, under Theorem \ref{nonabelianhodge}, the corresponding Higgs bundle.

The Hitchin fibration is the map
\begin{equation}
\begin{split}
&\pi:\MM_{\mathrm{Higgs}}\to \oplus_{i=2}^n H^0(\Si,\;K^{i})\\
&\pi(\vp)=(p_2(\vp),\;\cdots,\;p_{n}(\vp)),
\label{HitchinFiberationMap}
\end{split}
\end{equation}
where $\det (\lambda -\vp) = \sum \lambda^{n-j} (-1)^j p_{j}(\vp)$. By \cite{hitchin1987stable}, this is a proper map.

We next introduce the Hitchin component (also called the Hitchin section). Choose a spin structure $K^{\frac{1}{2}}$ and set $B_{i}=i(n-i)$.
Now define the Higgs bundle $(\ME, \vp)$, where 
\begin{equation}
\begin{aligned}
\ME: & =S^{n-1}(K^{-\frac{1}{2}}\oplus K^{\frac{1}{2}})=K^{-\frac{n-1}{2}}\oplus K^{-\frac{n-1}{2}+1}\oplus\cdots\oplus K^{\frac{n-1}{2}} \\[5mm]
\vp & =\begin{pmatrix}
0 & \sqrt{B_1} & 0 &\cdots& 0\\
0 & 0 & \sqrt{B_2} & \cdots& 0\\
\vdots & \vdots &\ddots & &\vdots\\
0 & \vdots & &\ddots &\sqrt{B_{n-1}}\\
q_{n}& q_{n-1} & \cdots & q_2 &0
\end{pmatrix}.
\end{aligned}
\label{HitchincomponentHiggsfield}
\end{equation}
The constant $\sqrt{B_i}$ in the $(i,i+1)$ entry represents this multiple of the natural isomorphism $K^{-\frac{n-1}{2} + i  } \to 
K^{-\frac{n-1}{2} + i -1 }\otimes K$, and similarly, $H^0(\Si,K^{n-i}) \ni q_{n-i} :K^{- \frac{n-1}{2} + i }\to K^{ \frac{n-1}{2}} \otimes K$. 
The Hitchin component $\MM_{\mathrm{Hit}}$ is the complex gauge orbit of this family of Higgs bundle,
\begin{equation}
\MM_{\mathrm{Hit}}:=\{(\ME:=S^{n-1}(K^{-\frac{1}{2}}\oplus K^{\frac{1}{2}}),\ \vp\ \mbox{as in}\ \eqref{HitchincomponentHiggsfield})\}/\MGC.
\end{equation}
The following theorem explains its importance.
\begin{theorem}{\cite{hitchin1992lie}}
Every element in $\MM_{\mathrm{Hit}}$ is a stable Higgs pair. 
Furthermore, the map assigning to each element of $\oplus_{i=2}^{n}H^0(\Si,K^{i})$ the unique solution of the Hitchin
equations corresponding to the associated Higgs pair is a diffeomorphism to one of the $n^{2g}$
choices for the Hitchin component; thus its inverse, the restriction of the Hitchin fibration
$\pi|_{\MM_{\mathrm{Hit}}}$, is also a diffeomorphism.
\end{theorem}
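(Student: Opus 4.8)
This is Hitchin's theorem \cite{hitchin1992lie}; the plan is to prove it in three stages — stability of the Higgs pairs in the explicit family, identification of the induced map with a section of the Hitchin fibration, and a properness argument upgrading this to a global diffeomorphism.

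\textbf{Stability.} Since stability is $\MGC$-invariant and $\MM_{\mathrm{Hit}}$ is the $\MGC$-orbit of the family \eqref{HitchincomponentHiggsfield}, it suffices to show that $(\ME,\vp)$ is stable for every choice of $q_2,\dots,q_n$ (with $K^{\frac12}$ fixed). So let $V\subset\ME$ be a holomorphic subbundle with $\vp(V)\subset V\otimes K$ and $0<\rank V=:r<n$; I need $\deg V<0$. I would first dispose of the Fuchsian point $q_2=\dots=q_n=0$, where $\vp=e$ is the superdiagonal map, nowhere vanishing because the $\sqrt{B_i}$ are nonzero constants. Fiberwise $e$ is a regular (single-block) nilpotent, whose only invariant subspaces are the steps $\Ker e^k$ of its canonical flag; hence a $\vp$-invariant subbundle of rank $r$ must equal $\Ker e^r=K^{-\frac{n-1}2}\oplus\cdots\oplus K^{-\frac{n-1}2+r-1}$, which has degree $(g-1)\,r(r-n)<0$ by Riemann--Roch. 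Thus the Fuchsian pair is stable. For general $q_i$ I would use that the $\CC^*$-action $(\ME,\vp)\mapsto(\ME,t\vp)$ is realized within the family by $(q_2,\dots,q_n)\mapsto(t^2q_2,\dots,t^nq_n)$ through the constant complex gauge transformation $g_t=\diag(1,t,\dots,t^{n-1})$, and degenerates $\vp$ to $e$ as $t\to0$. A destabilizing $V$ would then produce, for every $t\neq0$, a destabilizing subbundle $g_t^{-1}(V)$ for $(\ME,\vp_{(t^2q_2,\dots,t^nq_n)})$ of the same rank $r$ and degree $\deg V\ge0$; letting $t\to0$ — taking the flat limit in the Quot scheme of rank-$r$, degree-$\deg V$ quotients of $\ME$ and then saturating — produces an $e$-invariant subbundle of rank $r$ with nonnegative degree, contradicting the Fuchsian case. (For the $q_i$ whose spectral curve is smooth, stability follows at once from the Beauville--Narasimhan--Ramanan correspondence; the degeneration is needed only on the complementary locus.)

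\textbf{The section.} With stability in hand, Theorem \ref{nonabelianhodge} attaches to each $(q_2,\dots,q_n)$ a unique metric $H$ solving the Hitchin equations \eqref{Hitchinequation}, hence a unique flat connection $D=\na_H+\vp+\vp^{\st_H}$; the real structure on $(\ME,\vp)$ coming from the principal $\mathfrak{sl}(2,\RR)\subset\mathfrak{sl}(n,\RR)$ is preserved by the unique harmonic metric, so the monodromy of $D$ lies in $\SL(n,\RR)$. This gives a map $\Psi:\bigoplus_{i=2}^nH^0(\Si,K^i)\to\MM_{\mathrm{Higgs}}$ with image $\MM_{\mathrm{Hit}}$. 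Computing the characteristic polynomial of the companion-type matrix $\vp$ shows that $\pi\circ\Psi$ sends $(q_2,\dots,q_n)$ to its entrywise rescaling by nonzero constants built from the $B_i$; in particular $\Theta:=\pi\circ\Psi$ is a linear isomorphism of $\bigoplus_{i=2}^nH^0(\Si,K^i)$.

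\textbf{Global diffeomorphism.} From $\pi\circ\Psi=\Theta$ I get that $\Psi$ is injective with $d\Psi$ everywhere injective. Both the domain and every connected component of the target have real dimension $(2g-2)(n^2-1)=2(g-1)\sum_{i=2}^n(2i-1)$, so $d\Psi$ is an isomorphism and $\Psi$ is a local diffeomorphism. It is also proper: $\pi$ is proper \cite{hitchin1987stable}, $\Theta$ is proper, and $\Psi^{-1}(\text{compact})\subseteq\Theta^{-1}(\pi(\text{compact}))$ is then compact. A proper, injective local diffeomorphism from a connected space is a diffeomorphism onto a connected component of the target; that component is $\MM_{\mathrm{Hit}}$, and running the construction over the discrete choices it depends on (a choice of $K^{\frac12}$, together with an $n$-torsion twist) yields the $n^{2g}$ equivalent Hitchin components. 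Finally $\pi|_{\MM_{\mathrm{Hit}}}=\Theta\circ\Psi^{-1}$ is a composite of diffeomorphisms, hence a diffeomorphism. The one real obstacle is the stability statement for \emph{all} $q_i$: controlling the limits of $\vp$-invariant subbundles under the degeneration to the Fuchsian point. Everything else is formal once Theorem \ref{nonabelianhodge} and the properness of $\pi$ are in place.
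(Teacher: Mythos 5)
The paper does not prove this statement; it is quoted from \cite{hitchin1992lie}, so your proposal can only be measured against Hitchin's original argument, whose architecture (stability of the explicit family, a section of the Hitchin fibration, a properness/dimension argument) you reproduce. Your stability step is sound: the Fuchsian case via the chain of invariant subspaces of a regular nilpotent and the degree count $(g-1)r(r-n)<0$ is correct, and the degeneration $q_i\mapsto t^iq_i$ via $g_t=\diag(1,t,\dots,t^{n-1})$ together with a flat limit in the Quot scheme and saturation is a legitimate (if different from Hitchin's more direct) way to reduce the general case to the Fuchsian one. The identification $\pi\circ\Psi=\Theta$ with $\Theta$ a diagonal linear isomorphism is also correct.

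The genuine gap is in your last step. You declare the target of $\Psi$ to be $\MM_{\mathrm{Higgs}}$ and then assert that ``every connected component of the target'' has real dimension $(2g-2)(n^2-1)$. That is the dimension of the Hitchin \emph{base} and of the $\SL(n,\RR)$ character variety; the $\SL(n,\CC)$ Higgs moduli space has real dimension $4(g-1)(n^2-1)$, twice as large, and is connected. So as written, $d\Psi$ is only injective, not an isomorphism; $\Psi$ is not a local diffeomorphism into $\MM_{\mathrm{Higgs}}$; and $\MM_{\mathrm{Hit}}$ is not a connected component of $\MM_{\mathrm{Higgs}}$, so the ``proper injective local diffeomorphism onto a component'' conclusion fails. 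The properness argument is Hitchin's, but it only works after you replace the target by the locus of flat $\SL(n,\RR)$ connections (the fixed locus of the relevant involution), which does have real dimension $(2g-2)(n^2-1)$ at irreducible points; you have the needed ingredient (the real monodromy via the symmetric pairing on $S^{n-1}(K^{-1/2}\oplus K^{1/2})$) but never actually make that locus the target. Note also that for the literal statement quoted in the paper — that $\Psi$ is a diffeomorphism onto $\MM_{\mathrm{Hit}}$ with inverse $\Theta^{-1}\circ\pi|_{\MM_{\mathrm{Hit}}}$ — no properness is needed at all: since $\pi\circ\Psi=\Theta$ is a linear isomorphism and $\Psi$ is smooth (smooth dependence of the solution of \eqref{Hitchinequation} on stable Higgs data), $\Psi$ is a smooth section of the smooth map $\Theta^{-1}\circ\pi$, hence an embedding onto its image with the stated inverse. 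The properness/component argument is what proves the stronger fact, mentioned after the theorem, that $\MM_{\mathrm{Hit}}$ is a full connected component of the $\SL(n,\RR)$ character variety.
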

Note that the image of this map is only one component of the space of all irreducible flat $\SL(n,\RR)$ connections,
which explains the name `Hitchin component.'

\subsection{The Kobayashi-Hitchin Correspondence}
\label{datasetdefinition}
We now recall the Kobayashi-Hitchin correspondence for the \EBE moduli space \cite{gaiotto2012knot,HeMazzeo2017,MazzeoHe18}.

As noted earlier, from the Hermitian structure in \eqref{Eq_EBE_Hermitian} and the commutation relationship $[\MD_1,\MD_2]=0$,
we obtain a Higgs bundle $(\ME_y,\vp_y)$ on each slice $\Si\ti\{y\}$. The commutation relationship $[\MD_3,\MD_1]=[\MD_3,\MD_2]=0$
means that parallel transport by $\MD_3$ identifies these Higgs bundles for different values of $y$.

Suppose first that the solution of the \EBE satisfies the Nahm pole boundary condition without knots. As explained in more
detail in \cite[Section 4]{MazzeoHe18},  there is a holomorphic line subbundle $L \subset E$ determined by the property that the
parallel transports (under $\MD_3$ parallel transport) of its sections vanish at the fastest possible rate as $y \to 0$, measured with
respect to the Hermitian metric $H$.  In other words, a solution of the \EBE satisfying these boundary conditions determines
a triple $(\ME,\vp,L)$, consisting of a Higgs bundle and a line subbundle.

More generally, consider any triple $(\ME, \vp, L)$ where $L$ is any holomorphic line subbundle of $\ME$. Define holomorphic maps
$$
f_i:=1\we\vp\cdots \we\vp^{i-1} \in H^0(\Si; L^{- i}\otimes \we^{i}E\otimes K^{\frac{i(i-1)}{2}}),\ \ 1 \leq i \leq n.
$$
Note that 
$Z(f_j)-Z(f_{j-1})=\sum_i k^{i}_j p_i$ for some $k^{i}_j \in \mathbb N$. Setting $\nk_i:=(k^i_1,\cdots, k^i_{n-1}) \in \mathbb N^{n-1}$,
then we define the knot data set to be $\mathfrak d(\ME,\vp,L):=\{(p_i,\nk_i)\}$.  Note the important special case
(which holds by noting that $f_n\neq 0$ everywhere): 
\begin{proposition}{\cite[Section 4]{MazzeoHe18}}
If $\mathfrak d(\ME,\vp,L)=\emptyset$, then $(\ME,\vp)\in\MM_{\Hit}$ and $L\cong K^{\frac{n-1}{2}}$.
\end{proposition}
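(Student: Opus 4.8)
The plan is to translate the vanishing of the data set into the statement that a generator of $L$ is everywhere a cyclic vector for $\vp$, to use this to split $\ME$ holomorphically as a sum of line bundles, and then to recognize in that splitting the companion-type Higgs field of \eqref{HitchincomponentHiggsfield}.

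First I would unwind the definition of $\mathfrak d(\ME,\vp,L)$. Since $L$ is a subbundle, $f_1$ is the inclusion $L\hookrightarrow\ME$, which is nowhere vanishing, so $Z(f_1)=\emptyset$; as each divisor $Z(f_j)-Z(f_{j-1})$ is effective and $\mathfrak d(\ME,\vp,L)=\emptyset$ says all of these vanish, it follows that $Z(f_j)=\emptyset$ for every $j$. In particular $f_n$ is a nowhere-vanishing section of $L^{-n}\otimes\wedge^n\ME\otimes K^{n(n-1)/2}=L^{-n}\otimes K^{n(n-1)/2}$, using $\det\ME\cong\MO$ (we are tacitly using $f_n\not\equiv 0$, which is implicit in $\mathfrak d$ being defined). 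Choosing a local generator $\ell$ of $L$, a local coordinate $z$, and writing $\vp=\Psi\,dz$, the section $f_n$ is a nowhere-vanishing scalar multiple of $\ell\wedge\Psi\ell\wedge\cdots\wedge\Psi^{n-1}\ell$; its nonvanishing therefore means precisely that $\{\ell(p),\Psi(p)\ell(p),\dots,\Psi(p)^{n-1}\ell(p)\}$ is a basis of $\ME_p$ at every $p\in\Si$, i.e.\ $\ell$ is everywhere a cyclic vector for $\vp$.

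Next I would build the splitting and read off $\vp$. Consider the holomorphic bundle map
\[
\Theta:\ \bigoplus_{j=0}^{n-1}L\otimes K^{-j}\ \longrightarrow\ \ME,\qquad
\Theta\big|_{L\otimes K^{-j}}=\big(\vp^{j}|_L\big)\otimes\mathrm{id}_{K^{-j}},
\]
where $\vp^{j}|_L:L\to\ME\otimes K^{j}$ is the $j$-fold iterate of $\vp$ restricted to $L$. On each fiber $\Theta$ is the matrix with columns $\ell(p),\Psi(p)\ell(p),\dots,\Psi(p)^{n-1}\ell(p)$, which is invertible by the previous step, so $\Theta$ is an isomorphism. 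Taking determinants and using $\det\ME\cong\MO$ gives $L^{\,n}\cong K^{n(n-1)/2}$; after fixing a spin structure and setting $\xi:=L\otimes K^{-(n-1)/2}$, this says $\xi$ is an $n$-torsion line bundle and
\[
\ME\ \cong\ \xi\otimes\bigoplus_{j=0}^{n-1}K^{\frac{n-1}{2}-j}\ =\ \xi\otimes S^{n-1}\!\big(K^{-\frac12}\oplus K^{\frac12}\big),
\]
which is the underlying bundle $S^{n-1}(K^{-1/2}\oplus K^{1/2})$ of the Hitchin section when $\xi\cong\MO$, and its $n$-torsion twist otherwise. In the $\Theta$-frame, $\vp$ carries the $j$-th summand isomorphically onto the $(j{+}1)$-st for $j\le n-2$ (via the canonical identification $L\otimes K^{-j}\cong (L\otimes K^{-(j+1)})\otimes K$), while on the top summand it is expressed through the rest by the Cayley--Hamilton identity, whose coefficients are the $p_i(\vp)\in H^0(\Si;K^{i})$ of \eqref{HitchinFiberationMap} and whose $\vp^{n-1}$-coefficient vanishes because $\Tr\vp=0$. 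Thus the isomorphism class of $(\ME,\vp)$ depends only on $L$ and on $\pi(\vp)=(p_2(\vp),\dots,p_n(\vp))$. Since the Hitchin-section Higgs bundle over any point of the Hitchin base likewise carries a cyclic line subbundle isomorphic to $K^{(n-1)/2}$ (its top summand, cyclic because the $\sqrt{B_i}$ are nonzero), and $(q_2,\dots,q_n)\mapsto(p_2,\dots,p_n)$ is a linear isomorphism of Hitchin bases, the same construction applied to it produces the same normal form. Hence $(\ME,\vp)$ is complex-gauge equivalent to the Hitchin-section element over $\pi(\vp)$ in the component determined by $\xi$, with $L\cong\xi\otimes K^{(n-1)/2}$; normalizing $\xi\cong\MO$ (consistently with \eqref{HitchincomponentHiggsfield}) gives $(\ME,\vp)\in\MM_{\Hit}$ and $L\cong K^{(n-1)/2}$, the $n^{2g}$ choices of $\xi$ matching the $n^{2g}$ Hitchin components.

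The main obstacle is the globalization in the third paragraph: one must see that the purely pointwise cyclic-vector condition coming from $f_n\neq 0$ assembles — via the explicit map $\Theta$ — into a holomorphic splitting of $\ME$, and then establish the accompanying rigidity, namely that two Higgs bundles each admitting a cyclic line subbundle, with those subbundles isomorphic and with equal characteristic polynomials, are isomorphic Higgs bundles. This is what forces $(\ME,\vp)$ to be the Hitchin-section element even though the companion form $\vp$ naturally wears in the $\Theta$-frame does not literally match \eqref{HitchincomponentHiggsfield}. Once this is in place, reconciling the constants $\sqrt{B_i}$, the signs relating $q_i$ to $p_i(\vp)$, and the $n$-torsion/spin-structure bookkeeping is routine.
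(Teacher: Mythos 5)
Your argument is correct and follows exactly the route the paper indicates: it reduces $\mathfrak d(\ME,\vp,L)=\emptyset$ to the nowhere-vanishing of $f_n$, interprets this as $L$ being a cyclic line subbundle, and uses the resulting splitting $\ME\cong\bigoplus_{j=0}^{n-1}L\otimes K^{-j}$ with $L^n\cong K^{n(n-1)/2}$ to identify $(\ME,\vp)$ with the Hitchin-section normal form (up to the $n$-torsion/spin-structure ambiguity the paper itself acknowledges). The rigidity step you flag is handled correctly by putting both $(\ME,\vp,L)$ and the Hitchin-section element into the same companion normal form via $\Theta$, so no gap remains.
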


We then state the main equivalences between the \EBE moduli spaces and the spaces of triples $(\ME, \vp, L)$, first for data in
the Hitchin component and then for general data.
\begin{theorem}{\cite{HeMazzeo2017,MazzeoHe18}}
\label{Thm_KobayashiHitchinNP}
There is a diffeomorphism of moduli spaces
\[
\MM^{\mEBE}_{\NP}\cong \MM_{\mathrm{Hit}}.
\]
More specifically, recall the map $\Pinf^{\NP}$ from \eqref{Pinf}.
\begin{itemize}
\item [i)]  For any $(\ME,\vp)\in\MM_{\Hit}$, there exists a unique Nahm pole solution $(A,\phi,\phi_1)\in \MM^{\mEBE}_{\NP}$
such that $P_{\infty}^{\NP}(A,\phi,\phi_1)=(\ME,\vp)$; 
\item [ii)]  Given any Higgs bundle $(\ME,\vp)\notin\MM_{\Hit}$, there is no solution to the extended Bogomolny equations which
 converges to the flat connection determined by $(\ME,\vp)$. In other word, $(\Pinf^{\NP})^{-1}(\ME,\vp)=\emptyset$.
\end{itemize}
\end{theorem}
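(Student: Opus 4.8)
The plan is to prove this as a Kobayashi--Hitchin correspondence, in the spirit of Donaldson \cite{donaldson1985anti} and Uhlenbeck--Yau \cite{uhlenbeck1986existence}, adapted to the singular boundary at $y=0$ and the cylindrical end at $y=\infty$. Fix the holomorphic data cut out by the $\MGC$-invariant equations $[\MD_i,\MD_j]=0$ of \eqref{Eq_EBE_Hermitian}: a Higgs bundle $(\ME,\vp)$ on $\Si$, pulled back to $\Si\ti\RP$ with the trivial $\MD_3$-parallel structure in the $y$-direction, together with the holomorphic line subbundle $L\subset\ME$ whose sections decay fastest under $\MD_3$-parallel transport as $y\to0$ (this records how the Hermitian metric degenerates at the Nahm pole). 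Solving the \EBE with Nahm pole boundary condition is then equivalent to producing a Hermitian metric $H$ on $\ME$ over $\Si\ti\RP$ that satisfies the real moment-map equation in \eqref{Eq_EBE_Hermitian}, has the prescribed singular structure at $y=0$ dictated by $L$ and the dreibein, and converges to a Hitchin metric for $(\ME,\vp)$ as $y\to\infty$.

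For existence in part i), given $(\ME,\vp)\in\MM_{\Hit}$ I would take $L=K^{\frac{n-1}{2}}$, as forced by the Proposition preceding the theorem. First I would build an approximate solution by patching: near $y=\infty$ use the $y$-independent solution of the Hitchin equations \eqref{Hitchinequation} for $(\ME,\vp)$ supplied by Theorem \ref{nonabelianhodge} (it is stable, being in the Hitchin component); near $y=0$ use the model Nahm pole metric built from the principal embedding $\varrho:\su(2)\to\mfg$; interpolate on an intermediate collar. Then I would solve the moment-map equation exactly, either by a continuity argument starting from the approximate metric or by minimizing a Donaldson-type functional over Hermitian metrics with the correct weighted asymptotics at both ends. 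The decisive input is a uniform $C^0$ bound on $\log(HH_{\mathrm{app}}^{-1})$ along the minimizing sequence; here stability of $(\ME,\vp)$ and the compatibility condition $\mfd(\ME,\vp,L)=\emptyset$ enter through the Uhlenbeck--Yau dichotomy, since a degenerating sequence would produce a $\vp$-invariant subsheaf that destabilizes $(\ME,\vp)$ or violates the weight constraint recorded in $L$. Finally, the polyhomogeneous regularity of Theorem \ref{expansions} promotes the weak solution to a genuine Nahm pole solution, so that $\PNP(A,\phi,\phi_1)=(\ME,\vp)$.

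Uniqueness within a fiber of $\PNP$ comes from convexity: two solutions mapping to the same $(\ME,\vp)$ are isomorphic as holomorphic data, hence related by some $g\in\MGC$; setting $H_1:=g^*H_0$ and $s:=\log(H_1H_0^{-1})$, the Donaldson functional is convex along the geodesic $H_t=H_0e^{ts}$, and the Nahm pole and cylindrical boundary conditions kill the boundary terms in the second-variation identity, so criticality at both endpoints forces $\nabla s=0$ and $[\vp,s]=0$; thus $s$ is parallel and the two solutions agree up to unitary gauge. This yields injectivity of $\PNP$, and with part i) the set-theoretic bijection $\MM^{\mEBE}_{\NP}\cong\MM_{\Hit}$, which is promoted to a diffeomorphism by the deformation theory of \cite{MazzeoWitten2017}, the linearized \EBE operator together with the Nahm pole gauge fixing being Fredholm of index zero and invertible at each solution. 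For part ii), any Nahm pole solution without knots determines a triple $(\ME,\vp,L)$ as in \cite{MazzeoHe18}, and the uniformity of the Nahm pole model over all of $\Si$ --- the absence of any knot singularity --- forces each holomorphic section $f_j=1\we\vp\we\cdots\we\vp^{j-1}$ to be nowhere vanishing, so $Z(f_j)=Z(f_{j-1})$ for every $j$ and hence $\mfd(\ME,\vp,L)=\emptyset$; the Proposition then gives $(\ME,\vp)\in\MM_{\Hit}$, and the contrapositive is exactly the assertion of (ii).

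I expect the main obstacle to be the a priori $C^0$ estimate, i.e.\ the surjectivity half of the correspondence: one must run the Uhlenbeck--Yau argument in a setting where the base is noncompact in $y$ and the Hermitian metric is genuinely singular at $y=0$, so the integrations by parts acquire boundary contributions at $y=0$ and $y=\infty$ which must be shown, using the explicit Nahm pole weights and the asymptotics of $L$, to vanish or to carry the favorable sign. The patching construction, the convexity computation behind uniqueness, and the bookkeeping identifying knot data with the vanishing loci of the $f_j$ are comparatively routine once the weighted function spaces implicit in Theorem \ref{expansions} are in place.
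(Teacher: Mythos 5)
This theorem is not proved in the present paper at all; it is imported verbatim from \cite{HeMazzeo2017,MazzeoHe18}, so there is no internal proof to compare against. Your outline is a faithful reconstruction of the strategy of those references --- the Hermitian--Yang--Mills reformulation, gluing a model Nahm pole metric to the Hitchin-equation solution at $y=\infty$, a Donaldson-functional/continuity argument with the Uhlenbeck--Yau $C^0$ estimate as the crux, convexity for uniqueness, and the triple $(\ME,\vp,L)$ with $\mathfrak d(\ME,\vp,L)=\emptyset$ for part ii) --- so it takes essentially the same approach as the cited proof.
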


\begin{theorem}{\cite{HeMazzeo2017,MazzeoHe18}}
 \label{Thm_KobayashiHitchinKnot}
 Fix a data set $D=\{(p_i,\nk_i=(k_1^i,\cdots,k_{n-1}^i))\}$.  If $(\ME, \vp)$ is any stable Higgs bundle over $\Si$ with genus $g(\Si)>1$,
 there exists a solution to the \EBE satisfying the general Nahm pole boundary condition with knot singularities at $p_i$ with weight $\nk_i$
 if and only if there exists a line bundle $L\subset \ME$ such that $\mathfrak d(\ME,\vp,L)=D$. In other words, there is a bijection
 \[
   \MM^{\mEBE}_{\NPK}\cong \{(\ME,\vp,L)\}/\MGC,
 \]
where the pairs $(\ME,\vp)$ on the right are stable Higgs bundles and $L \subset \ME$ is a line subbundle.
\end{theorem}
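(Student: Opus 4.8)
The plan is to prove the bijection by the Donaldson--Uhlenbeck--Yau method adapted to this singular, non-compact setting: construct a map in each direction and check that they are mutually inverse, the real content being an existence-and-uniqueness statement for the Hermitian metric solving the moment-map equation within a fixed complex gauge orbit.

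\emph{From a solution to the triple.} Given $(A,\phi,\phi_1)\in\MM^{\mEBE}_{\NPK}$, the commutation relations $[\MD_1,\MD_2]=0$ and $[\MD_3,\MD_1]=[\MD_3,\MD_2]=0$ from \eqref{Eq_EBE_Hermitian} endow each slice $\Si\ti\{y\}$ with a Higgs bundle $(\ME_y,\vp_y)$ and identify these for different $y$ through $\MD_3$-parallel transport, producing a single pair $(\ME,\vp)$; it is the Higgs bundle attached by non-abelian Hodge (Theorem \ref{nonabelianhodge}) to the flat limit at $y\to\infty$, hence stable. Following \cite{MazzeoHe18}, the distinguished subbundle $L\subset\ME$ is the line bundle whose sections, transported to small $y$, decay fastest in the metric $H$; the polyhomogeneous expansions of Theorem \ref{expansions} make this rate well defined and independent of choices up to $\MGC$. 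Near a knot $K_i=S^1\ti\{p_i\}$ one compares with the Mikhaylov model $(A^{\md}_{\nk_i},\Phi^{\md}_{\nk_i})$: matching its leading asymptotics pins down the jet of $(\vp,L)$ at $p_i$ and forces $Z(f_j)-Z(f_{j-1})$ to contain $p_i$ with multiplicity exactly $k^i_j$, i.e.\ $\mfd(\ME,\vp,L)=D$.

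\emph{From the triple to a solution.} This is the analytically serious direction. Given $(\ME,\vp,L)$ with $\mfd(\ME,\vp,L)=D$, one first builds a smooth background Hermitian metric $H_0$ on the pullback of $\ME$ to $\Si\ti\RP$ realizing, as $y\to0$ and away from the $p_i$, the Nahm-pole model determined by $(\vp,L)$; near each corner $\{p_i\}\ti\{0\}$, the Mikhaylov knot model $(A^{\md}_{\nk_i},\Phi^{\md}_{\nk_i})$ --- this is precisely where $\mfd(\ME,\vp,L)=D$ is used, as it guarantees that the weights seen by $(\ME,\vp,L)$ at $p_i$ coincide with those of the model of weight $\nk_i$ --- and, as $y\to\infty$, the harmonic metric of Theorem \ref{nonabelianhodge}. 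Since $H_0$ already satisfies the $\MGC$-invariant system $[\MD_i,\MD_j]=0$, writing the true metric as $H=H_0e^{s}$ for an $H_0$-self-adjoint endomorphism $s$ reduces \eqref{Eq_EBE_Hermitian} to a single nonlinear elliptic equation for $s$, which one solves by minimizing a Donaldson-type energy functional (equivalently, a continuity method or the Donaldson heat flow): stability of $(\ME,\vp)$ together with $\mfd(\ME,\vp,L)=D$ supplies the a priori $C^0$ bound on $s$ and hence the coercivity needed to extract a minimizer, while an indicial-root and Fredholm analysis of the linearization on the corner manifold $[\Si\ti\RP;\{p_i\}\ti\{0\}]$ promotes the weak solution to a polyhomogeneous one with the leading asymptotics required for membership in $\MM^{\mEBE}_{\NPK}$.

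\emph{Inverses and the main difficulty.} The composite triple$\,\to\,$solution$\,\to\,$triple is the identity by construction, and solution$\,\to\,$triple$\,\to\,$solution is the identity by uniqueness of the solution with prescribed complex data; uniqueness follows from strict convexity of the Donaldson functional along the geodesics $H_t=He^{ts}$ in the space of Hermitian metrics, exactly as for Theorem \ref{Thm_KobayashiHitchinNP}. The principal obstacle is the existence half of the reverse direction, and inside it the corner analysis at $\{p_i\}\ti\{0\}$, where the Nahm-pole face meets the knot face: one must control the interaction of the two distinct singular weights, establish a priori estimates that are uniform up to the corner, and verify that the test subbundles which could obstruct the $C^0$ estimate are exactly those ruled out by the compatibility $\mfd(\ME,\vp,L)=D$ --- stability of $(\ME,\vp)$ alone does not suffice, since the prescribed knot weights alter which configurations count as destabilizing.
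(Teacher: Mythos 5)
A preliminary point: the paper does not prove this theorem at all --- it is quoted verbatim from \cite{HeMazzeo2017,MazzeoHe18} and used as an input to the classification, so there is no in-paper proof to compare against. What you have written is an outline of the strategy of those references, and at the level of strategy it is the right one: slice-wise Higgs bundles from $[\MD_1,\MD_2]=0$ identified along $y$ by $\MD_3$-transport, the line bundle $L$ singled out by fastest decay, a background metric $H_0$ glued from the Nahm-pole model, the Mikhaylov knot models, and the harmonic metric at infinity, and then a Donaldson-functional (or heat-flow/continuity) argument for the moment-map equation within the complex gauge orbit.

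As a proof, however, the proposal has genuine gaps rather than omitted routine details. First, the entire existence half rests on the single assertion that stability of $(\ME,\vp)$ together with $\mfd(\ME,\vp,L)=D$ supplies the a priori $C^0$ bound on $s$; that \emph{is} the theorem, not a lemma, and you give no mechanism for it --- you do not identify which degenerating subsheaves or filtrations could obstruct the bound in the presence of the prescribed singular weights, nor why the data-set condition (as opposed to bare stability) rules them out, even though you yourself note that stability alone does not suffice. Second, uniqueness via convexity of the Donaldson functional along $H_t = He^{ts}$ requires that the boundary contributions in the second-variation identity vanish at $y=0$, at $y=\to\infty$, and at the corners $\{p_i\}\times\{0\}$ where the two singular weights interact; this must be checked against the precise decay rates of Theorem \ref{expansions} and is exactly where such arguments can fail. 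Third, for the composite solution $\to$ triple $\to$ solution to be the identity you must show that the fastest-decaying subbundle of the \emph{constructed} solution is the $L$ you started with, which is an indicial-root computation you assert but do not perform. In short, you have correctly named the principal obstacle (the corner analysis and the singular $C^0$ estimate) but naming it is not overcoming it; the proposal is a faithful roadmap of \cite{HeMazzeo2017,MazzeoHe18}, not a self-contained proof.
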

\begin{remark}
  Notice that in the second result, when knot singularities are allowed, we do not claim that this bijection of moduli spaces
  is a diffeomorphism. Indeed, while the space of triples $(\ME, \vp, L)$ maps onto the space of all stable Higgs pairs, i.e., onto
  the entire Hitchin moduli space, it   is not clear that this space of triples is even a manifold. 

  As a second remark, if $(\ME,\vp)$ is polystable, it seems likely that there are no solutions to the \EBE which satisfy Nahm
  pole boundary conditions   with knot singularities which converge to $(\ME,\vp)$. However, we do not prove this.
\end{remark}

\section{A Weitzenb\"ock Identity for the Kapustin-Witten Equations}
In this section, we establish a Weitzenb\"ock identity analogous to the one in \cite{MazzeoWitten2017}, and use this to show
that all solutions to the KW equations over $M:=S^1\ti \Si \ti \RP$ are invariant in the $S^1$ direction, hence determine solutions
to the extended Bogomolny equations.   In all the following, we use coordinates $x_1 \in S^1$, $z \in \Si$ and $y \in \RP$. 
\subsection{Weitzenb\"ock Identity}
As before, let $P$ be an $\mathrm{SU}(n)$ bundle over $M:=S^1\ti  \Si\ti \RP$, and fix a connection $\hA$ and a
$\gpp$-valued $1$-form $\hP$ on $M$; assume that $\hA_1 = \hA_y = \hP_y \equiv 0$.
Write $d_{\hA}=d_A+dx_1\we \na_1$ and $\hP=\phi+\phi_1 dx_1$. We also fix a
product metric on $S^1\ti\Si\ti\RP$ with orientation $dx_1\we dA_\Sigma\we dy$. 

Now write $F_{\hA}=F_A+B_A\we dx_1$; the Bianchi identity $d_{\hA}F_{\hA}=0$ is equivalent to
\begin{equation}
\label{Bianchi}
\na_1 F_A+d_AB_A=0
\end{equation}
In the following, we write $\st_4$ and $\st$ for the Hodge star operators on $M$ and $S^1\ti\Si$, respectively.

We first compute
\begin{equation}
\begin{split}
&F_{\hA}- \hP\we \hP+\st_4 d_{\hA}\hP \\
& \qquad \quad  = (F_A-\phi\we\phi+\st(\na_1\phi-d_A\phi_1))+(B_A-[\phi,\phi_1]-\st d_A\phi)\we dx_1,\\
&d_{\hA}^{\st_4}\hP=d_A^{\st}\phi-\na_1\phi_1.
\end{split}
\end{equation}	
Next, for any $\ep\in(0,1)$, write $M_{\ep}:=S^1\ti\Si\ti [\ep, \ep^{-1}]$. Then 
\begin{equation}
\begin{split}
  \int_{M_{\ep}}|KW|^2 &=  \int_{M_{\ep}}|F_A-\phi\we\phi+\st(\na_1\phi-d_A\phi_1)|^2 \\ & \qquad \ \ 
  + |B_A-[\phi,\phi_1]-\st d_A\phi|^2+|d_A^{\st}\phi-\na_1\phi_1|^2\\
  & =\int_{M_{\ep}}|F_A-\phi\we\phi-\st d_A\phi_1|^2+|\na_1\phi|^2+ \\
  & \qquad \ \ |B_A|^2+|[\phi,\phi_1]+\st d_A\phi|^2+|d_A^{\st}\phi|^2+|\na_1\phi_1|^2+\int_{ M_\ep} \chi,
\end{split}
\end{equation}
where
\begin{equation}
\chi:=2\lan F_A-\phi\we\phi-\st d_A\phi_1,\st \na_1\phi\ran-2\lan B_A,\st d_A\phi+[\phi,\phi_1]\ran-2\lan d_A^{\st}\phi,\na_1\phi_1 \ran.
\end{equation}
The inner product here is $\lan A,B\ran:=- \Tr(A\we \st_4 B)$. 

\begin{lemma}
We have the following identities:
\begin{itemize}
\item [i)]  $\quad \lan F_A,\st \na_1\phi \ran-\lan B_A,\st d_A\phi\ran=\na_1\Tr(F_A\we \phi)\we dx_1+d\Tr(B_A\we\phi)\we dx_1$,
\item [ii)]
 \begin{equation*}
\begin{split}
&\lan \st d_A\phi_1,\st \na_1\phi\ran+\lan B_A,[\phi,\phi_1]\ran+\lan \st d_A\phi_1,\st \na_1\phi\ran\\
=&\na_1\Tr(\phi\we \st d_A\phi_1)\we dx_1-\na_1\Tr(\phi_1\we d_A\st \phi\we dx_1)
\end{split}
\end{equation*}
\item [iii)] $\quad \lan \phi\we\phi, \st \na_1\phi\ran=\frac{1}{3}\na_1\Tr(\phi\we\phi\we\phi\we dx_1)$
\end{itemize}
\end{lemma}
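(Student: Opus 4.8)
The plan is to treat \textup{(i)--(iii)} as what they literally are: pointwise identities between top-degree forms on $M$, each verified by expanding both sides into traces of wedge products and matching term by term. Once they are in hand they feed, after integration over $M_\ep$, into the Weitzenb\"ock formula for $\chi$ in exactly the way the analogous identities do in \cite{MazzeoWitten2017}. The ingredients I would use are: (a) the product structure $M=S^1_{x_1}\ti(\Si\ti\RP)$ underlying the splitting $d_{\hA}=d_A+dx_1\we\na_1$, which for $\gpp$-valued forms $\alpha,\be$ on $\Si\ti\RP$ of equal degree gives $\lan\alpha,\be\ran=-\Tr(\alpha\we\st_4\be)=\Tr(\alpha\we\st\be)\we dx_1$, together with $\st\st=\mathrm{id}$ on one- and two-forms of $\Si\ti\RP$ (hence $\lan\st\alpha,\st\be\ran=\lan\alpha,\be\ran$, so Hodge stars may be cancelled in pairs); (b) the Leibniz rule $d\,\Tr(\alpha\we\be)=\Tr(d_A\alpha\we\be)+(-1)^{\deg\alpha}\Tr(\alpha\we d_A\be)$, valid because the connection terms cancel under the trace, and the observation that, since $\hA_1\equiv0$, the operator $\na_1$ acts on $\gpp$-valued forms as the ordinary derivative $\pa_{x_1}$ and hence satisfies the usual product rule on traces; (c) the graded cyclicity $\Tr(\alpha\we\be)=(-1)^{\deg\alpha\deg\be}\Tr(\be\we\alpha)$; and (d) the Bianchi identity \eqref{Bianchi}, $\na_1 F_A=-d_A B_A$, together with the curvature commutation relation $[\na_1,d_A]=[B_A\we\,\cdot\,]$ coming from $F_{1\mu}=(B_A)_\mu$.

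For \textup{(i)} I would expand the right-hand side using (b): $\na_1\Tr(F_A\we\phi)=\Tr(\na_1 F_A\we\phi)+\Tr(F_A\we\na_1\phi)$ and $d\,\Tr(B_A\we\phi)=\Tr(d_A B_A\we\phi)-\Tr(B_A\we d_A\phi)$; substituting $\na_1 F_A=-d_A B_A$ from \eqref{Bianchi} cancels the two $\Tr(d_A B_A\we\phi)$ terms, leaving $\bigl(\Tr(F_A\we\na_1\phi)-\Tr(B_A\we d_A\phi)\bigr)\we dx_1$, and expanding the left-hand side via (a) reproduces exactly this. For \textup{(iii)}, the graded cyclicity (c) shows that the three terms of $\na_1\Tr(\phi\we\phi\we\phi\we dx_1)$ are all equal---each $\phi$ has odd degree, so a cyclic shift costs an even sign---so this equals $3\,\Tr(\na_1\phi\we\phi\we\phi)\we dx_1$, which is the source of the factor $\tfrac13$; and $\lan\phi\we\phi,\st\na_1\phi\ran=\Tr(\phi\we\phi\we\na_1\phi)\we dx_1=\Tr(\na_1\phi\we\phi\we\phi)\we dx_1$ by (a) and (c), matching.

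Identity \textup{(ii)} follows the same scheme, the one new element being that $\phi_1$ is a section (a zero-form) which may be moved freely through wedge products. Here I would expand the two $\na_1$-derivatives on the right and rewrite the terms of the form $\na_1 d_A(\,\cdot\,)$ using the commutator in (d); the $B_A$-dependent contributions produced by $[\na_1,d_A]=[B_A\we\,\cdot\,]$ collapse, after invoking the $\ad$-invariance of $-\Tr(\cdot\,\cdot)$ and graded cyclicity, to $\lan B_A,[\phi,\phi_1]\ran$, while the remaining pieces reassemble into the pairings of $d_A\phi_1$ with $\na_1\phi$ and of $d_A^{\st}\phi$ with $\na_1\phi_1$. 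I do not expect any conceptual obstacle; the one place real care is needed is sign bookkeeping---the graded commutativity of $\we$, the graded cyclicity of $\Tr$, and the orientation conventions built into $\st_4$ and $\st$ for the orientation $dx_1\we dA_\Si\we dy$---and in particular making the interplay in \textup{(ii)} between the commutator term and the $[\phi,\phi_1]$ pairing come out with the correct sign, all of which is managed exactly as in the corresponding computation of \cite{MazzeoWitten2017}.
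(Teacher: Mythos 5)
Your overall strategy --- expand both sides into traces via the Leibniz rules for $\na_1$ and $d_A$, cancel with the Bianchi identity \eqref{Bianchi}, and use graded cyclicity of $\Tr$ --- is exactly the paper's. For part (i) your computation is the paper's read in the opposite direction (the paper expands the left side and cancels $\Tr(\na_1F_A\we\phi)$ against $\Tr(d_AB_A\we\phi)$ via \eqref{Bianchi}; you expand the right side), and for part (iii) your cyclicity argument producing the factor $\tfrac13$ is precisely what the paper dismisses as ``straightforward.'' You also implicitly correct the typo in the statement of (ii): the third term on the left should be $\lan d_A^{\st}\phi,\na_1\phi_1\ran$, which is what occurs in $\chi$ and in the paper's own proof.

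In (ii), however, there is a genuine gap at the step where you assert that the non-$B_A$ ``remaining pieces reassemble into the pairings of $d_A\phi_1$ with $\na_1\phi$ and of $d_A^{\st}\phi$ with $\na_1\phi_1$.'' The Leibniz rule applied to $\na_1\Tr(\phi\we\st d_A\phi_1)$ produces, besides $\Tr(\na_1\phi\we\st d_A\phi_1)$ and the $B_A$-commutator term, the term $\Tr(\phi\we\st d_A\na_1\phi_1)$; likewise $\na_1\Tr(\phi_1\we d_A\st\phi)$ produces $\Tr(\phi_1\we d_A\st\na_1\phi)$. Neither of these is one of the stated pairings, and they do not cancel against each other. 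To convert them you must integrate by parts on $\Si\ti\RP$, e.g.\ $\Tr(\phi\we\st d_A\na_1\phi_1)=d\,\Tr(\na_1\phi_1\,\st\phi)-\Tr(\na_1\phi_1\, d_A\st\phi)$, which yields a \emph{second} copy of the $\lan d_A^{\st}\phi,\na_1\phi_1\ran$ pairing plus an exact form, and symmetrically for the other term. The net effect is that the right-hand side of (ii) equals twice the (corrected) left-hand side plus $d\,\Tr(\na_1\phi_1\,\st\phi-\phi_1\,\st\na_1\phi)\we dx_1$; the identity as printed is not a pointwise identity. (The paper's own displayed proof of (ii) silently discards exactly these two terms, so you have reproduced its gap rather than introduced a new one.) None of this harms the application: the extra exact form integrates to boundary terms involving the pullbacks of $\st\phi$ and $\st\na_1\phi$ to the slices $\Si\ti\{y\}$, which vanish identically because $\phi$ has no $dy$-component so $\st\phi$ contains a $dy$ factor, and the overall factor of $2$ is immaterial since all the $\na_1(\cdots)$ terms integrate to zero over $S^1$ in any case. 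But a complete proof of (ii) must record these corrections rather than claim termwise matching.
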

\begin{proof}
For (i), we compute 
\begin{equation*}
\begin{split}
\lan F_A, & \st \na_1\phi \ran -\lan B_A,\st_3 d_A\phi\ran\\
&= \Tr(F_A\we \na_1\phi)\we dx_1-\Tr(B_A\we d_A\phi)\we dx_1\\
&= \na_1\Tr(F_A\we \phi)\we dx_1-\Tr(\na_1F_A\we \phi)\we dx_1 \\
& \qquad \qquad +d\Tr(B_A\we\phi)\we dx_1-\Tr(d_AB_A\we\phi)\we dx_1\\
&= \na_1\Tr(F_A\we \phi)\we dx_1+d\Tr(B_A\we\phi)\we dx_1,
\end{split}
\end{equation*}
where the last step uses \eqref{Bianchi}.
		
Next, for (ii), 
\begin{equation*}
\begin{split}
\ \ \lan \st d_A\phi_1, & \st \na_1\phi \ran \\
& = \na_1\Tr(\phi\we \st d_A\phi_1)\we dx_1-\Tr(\phi\we \na_1(\st d_A\phi_1))\we dx_1\\
& = \na_1\Tr(\phi\we \st d_A\phi_1)\we dx_1-\Tr(\phi\we \st B_A\we \phi_1)\we dx_1,
\end{split}
\end{equation*}
\begin{equation*}
\begin{split}
\ \lan d_A^{\st}\phi, & \na_1\phi_1\ran \\
& = -\Tr(\na_1\phi_1\we d_A\st \phi)\we dx_1\\
& = -\na_1\Tr(\phi_1\we d_A\st \phi\we dx_1)+\Tr(\phi_1\we B_A\we \st \phi)\we dx_1,
\end{split}
\end{equation*}
and 		
\begin{equation*}
\lan B_A, [\phi,\phi_1]\ran=-\Tr(B_A\we \st \phi\phi_1-B_A\we\phi_1\we \st \phi)\we dx_1.
\end{equation*}
Adding these three equalities yields (ii).  The proof of (iii) is straightforward.
\end{proof}

\begin{corollary}
We have
\begin{equation*}
\label{boundaryterms}
\begin{split}
\int_{\Me} \chi=&\int_\Me \na_1\Tr(2F_A\we\phi-\frac{2}{3}\phi^3-2\phi\we \st d_A\phi_1+\phi_1\we d_A\st \phi)\we dx_1\\
&+2\int_\Me d \, \Tr(B_A\we \phi)\we dx_1.
\end{split}
\end{equation*}
\end{corollary}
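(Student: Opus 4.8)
The plan is to obtain the identity by substituting the three formulas of the preceding Lemma into the definition of $\chi$ and collecting terms; no geometric input beyond the Lemma—hence beyond the Bianchi identity \eqref{Bianchi}—enters. First I would expand the two inner products $\lan F_A-\phi\we\phi-\st d_A\phi_1,\st\na_1\phi\ran$ and $\lan B_A,\st d_A\phi+[\phi,\phi_1]\ran$ by linearity and regroup the six resulting terms of $\chi$ as
\[
\chi=2\big(\lan F_A,\st\na_1\phi\ran-\lan B_A,\st d_A\phi\ran\big)-2\lan\phi\we\phi,\st\na_1\phi\ran-2\big(\lan\st d_A\phi_1,\st\na_1\phi\ran+\lan B_A,[\phi,\phi_1]\ran+\lan d_A^{\st}\phi,\na_1\phi_1\ran\big).
\]
By construction the first parenthesis is the left-hand side of Lemma part (i), the middle term is the left-hand side of part (iii), and the last parenthesis is the left-hand side of part (ii).

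Next I would substitute the right-hand sides supplied by the Lemma. This rewrites $\chi$ as a sum of terms of the form $\na_1\Tr(\,\cdot\,)\we dx_1$, plus the single exact term $2\,d\Tr(B_A\we\phi)\we dx_1$ coming from part (i). Gathering the $\na_1\Tr$ contributions under one trace—$2F_A\we\phi$ from (i), $-\tfrac23\phi^3$ from (iii), and the $-2\phi\we\st d_A\phi_1$ and $\phi_1\we d_A\st\phi$ contributions from (ii)—and integrating over $\Me$ then gives the asserted formula. The exact terms $\na_1\Tr(\cdots)\we dx_1$ and $d\Tr(B_A\we\phi)\we dx_1$ are deliberately left unintegrated here, since they will be handled as boundary-type contributions later in the argument.

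Since this step is a pure substitution, there is essentially no obstacle; the only point requiring attention is the bookkeeping of the numerical coefficients ($2$ and $-\tfrac23$) and of the signs when splitting the six inner products of $\chi$ into the three groups above—in particular, noticing that the $B_A$-dependent cross terms appearing inside the proof of Lemma (ii) have been arranged to cancel, so that only the two $\na_1\Tr$ terms survive there. All of this is immediately checked against the explicit computations in the proof of the Lemma.
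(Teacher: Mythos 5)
Your proposal is correct and is exactly the paper's (implicit) argument: the corollary is obtained by splitting the six terms of $\chi$ into the three groups matching parts (i), (ii), (iii) of the Lemma and substituting. One bookkeeping remark: since every term of group (ii) enters $\chi$ with coefficient $-2$, the substitution actually produces $+2\,\phi_1\we d_A\st\phi$ inside the trace rather than the stated $+\phi_1\we d_A\st\phi$ (an apparent typo in the corollary), which is immaterial because that term is a total $\na_1$-derivative and vanishes upon integration over $S^1$.
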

\begin{lemma}
\label{limitflatSLCconnection}
Let $A^\rho+\phi^\rho$ be a flat $SL(n,\mbC)$ connection over $S^1\ti\Si$, and write $A^\rho=A^\rho_1+A^\rho_\Si$,
$\phi^{\rho}=\phi^{\rho}_1+\phi^{\rho}_\Si$. 
\begin{itemize}
\item [i)] If we write $F_{A^{\rho}}=B_{A^{\rho}}\we dx_1+E_{A^{\rho}}$, then $B_{A^{\rho}}=0$;
\item [ii)] Up to a unitary gauge transformation, we can assume $A^{\rho}_1$ and $\phi^\rho_1$ are invariant in the $\Si$ directions
and $A^{\rho}_\Si$, $\phi^{\rho}_\Si$ are invariant in the $S^1$ directions.
\item [iii)] Up to a unitary gauge transformation, $\phi_1^{\rho}=0$. 
\end{itemize}
\end{lemma}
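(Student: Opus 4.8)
The plan is to regard $A^\rho+\phi^\rho$ as a $y$-independent solution of the KW equations on $M=S^1\ti\Si\ti\RP$, equivalently as a solution of the decoupled system \eqref{flatconnection} on the closed $3$-manifold $S^1\ti\Si$ — which it is, being a steady state with $\phi_y\equiv 0$ by Proposition \ref{vanphiy} and the remark following it — and to run the integration-by-parts identity of this section on it. For $y$-independent fields the term $\int_{\Me}\na_1\Tr(\cdots)\we dx_1$ appearing in the Corollary vanishes by periodicity in $x_1$, and the two contributions of $2\int_{\Me}d\,\Tr(B_A\we\phi)\we dx_1$ coming from the slices $y=\ep$ and $y=\ep^{-1}$ are equal and cancel by Stokes; hence $\int_{\Me}\chi=0$. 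Since $|KW|\equiv 0$ and, on a solution of \eqref{flatconnection}, the squares $|F_A-\phi\we\phi-\st d_A\phi_1|^2$, $|[\phi,\phi_1]+\st d_A\phi|^2$, $|d_A^{\st}\phi|^2$ collapse to $|d_{A^\rho}\phi^\rho_1|^2$, $|[\phi^\rho_\Si,\phi^\rho_1]|^2$ and $0$, dividing the resulting identity by the length of the $y$-interval gives
\[
\int_{S^1\ti\Si}\Big(|d_{A^\rho}\phi^\rho_1|^2+|\na_1\phi^\rho_\Si|^2+|B_{A^\rho}|^2+|[\phi^\rho_\Si,\phi^\rho_1]|^2+|\na_1\phi^\rho_1|^2\Big)=0 .
\]

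Every term then vanishes identically. The vanishing of $B_{A^\rho}$ is exactly (i). The remaining relations say that $\phi^\rho_\Si$ and $\phi^\rho_1$ are covariantly constant in the $x_1$ direction, that $\phi^\rho_1$ is covariantly constant along $\Si$, that the mixed $x_1$--$\Si$ curvature vanishes, and that $\phi^\rho_1$ commutes with $\phi^\rho_\Si$. Using the vanishing of the mixed curvature one gauges $A^\rho_\Si$ to be $x_1$-independent; then $B_{A^\rho}=0$ forces $A^\rho_1$ to be covariantly constant along $\Si$, and the residual $S^1$-holonomy is an automorphism of the slicewise structure. Once this structure is known to be stable — see (iii) — that automorphism is a scalar, so it commutes with everything, $\na_1\phi^\rho_\Si=\na_1\phi^\rho_1=0$ become $\pa_1\phi^\rho_\Si=\pa_1\phi^\rho_1=0$, and $A^\rho_\Si,\phi^\rho_\Si$ are $S^1$-invariant while $A^\rho_1,\phi^\rho_1$ are covariantly constant along $\Si$; this is (ii).

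For (iii), observe that the slicewise fields $(A^\rho_\Si,\phi^\rho_\Si)$ satisfy the Hitchin equations \eqref{Hitchinequation} on $\Si$ (using $d_{A^\rho}^{\st}\phi^\rho=0$ together with $\na_1\phi^\rho_1=0$), so by Theorem \ref{nonabelianhodge} they determine a Higgs bundle $(\ME^\rho,\vp^\rho)$ with a harmonic metric, and $A^\rho_\Si$ is its Chern connection. Then $\phi^\rho_1$ is skew-Hermitian, is $\bar\pa_{A^\rho_\Si}$-holomorphic because covariantly constant, and (splitting the bidegrees in $[\phi^\rho_\Si,\phi^\rho_1]=0$) commutes with $\vp^\rho$; i.e.\ $\phi^\rho_1$ is an endomorphism of the Higgs bundle $(\ME^\rho,\vp^\rho)$. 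When the limiting flat connection is irreducible — the only relevant case here, since by the Kobayashi--Hitchin correspondence (Theorems \ref{Thm_KobayashiHitchinNP} and \ref{Thm_KobayashiHitchinKnot}) Nahm pole solutions converge only to connections in the Hitchin section or associated to stable Higgs bundles — stability forces every such endomorphism to be a scalar, and since $\phi^\rho_1$ is trace-free it vanishes. (In the reducible case one decomposes the flat bundle into isotypic pieces, on which the harmonic metric splits orthogonally; a piece carrying a non-unitary $S^1$-holonomy is precisely one on which $\phi^\rho_1$ would be nonzero, and exactly such pieces are excluded by the correspondence.)

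The main obstacle is the bookkeeping in the first step: one must check that the cross-terms collected into $\chi$, which on a general $4$-manifold the Corollary records as boundary expressions, really do assemble — with the correct signs — into the manifestly nonnegative combination displayed above once the configuration is $y$-independent, so that the argument yields a sum of squares with vanishing integral rather than merely an identity among norms. The other delicate point is the reducible case of (iii): there the vanishing of $\phi^\rho_1$ is not a formal consequence of the Weitzenböck identity and genuinely uses the $y\to\infty$ analysis of the extended Bogomolny equations.
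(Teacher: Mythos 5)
Your argument is essentially correct, but it takes a genuinely different route from the paper. The paper disposes of (i) and (ii) in one line via holonomy: a flat connection on $S^1\ti\Si$ is a representation of $\pi_1(S^1\ti\Si)=\pi_1(S^1)\ti\pi_1(\Si)$, so up to gauge the $S^1$- and $\Si$-parts of the data can be separated, and (iii) follows from the fact that the single $S^1$-holonomy element, being constrained by the harmonic-metric structure, has no Hermitian part. You instead specialize the section's Weitzenb\"ock identity to the $y$-independent configuration: since a steady state satisfies the decoupled system \eqref{flatconnection} (so $|KW|\equiv 0$), and since $y$-independence kills both the $\na_1$-terms and, by cancellation of the two $y$-slices, the $d\,\Tr(B_A\we\phi)$ boundary terms, the identity degenerates to a vanishing sum of squares, giving $B_{A^\rho}=0$, $\na_1\phi^\rho=\na_1\phi^\rho_1=0$, $d_{A^\rho}\phi^\rho_1=0$ and $[\phi^\rho_\Si,\phi^\rho_1]=0$ pointwise. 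This is longer but has the virtue of producing exactly the pointwise identities used downstream (notably $B_{A^\rho}=0$ for the boundary term at $y=\infty$) and of making explicit that the lemma really requires the limiting connection to be a steady state of the flow, i.e.\ to satisfy $d_A\st\Phi=0$, not merely to be flat; the paper's statement is silent on this.

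Two caveats. First, your appeal to Theorems \ref{Thm_KobayashiHitchinNP} and \ref{Thm_KobayashiHitchinKnot} to obtain stability of the slicewise Higgs bundle is circular at this point in the logical development: those theorems concern limits of \emph{extended Bogomolny} solutions, and $S^1$-invariance of the KW solution is precisely what this section is trying to establish. The repair is immediate and you essentially state it: irreducibility of the limiting flat connection is a standing hypothesis (cf.\ Proposition \ref{vanphiy} and the main theorems), and your relations $d_{A^\rho}\phi^\rho_1=0$, $[\Phi^\rho,\phi^\rho_1]=0$ say that $\phi^\rho_1$ is a parallel section of the adjoint bundle for the flat connection $A^\rho+i\Phi^\rho$; Schur then makes it a trace-free scalar, hence zero. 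No detour through Higgs-bundle stability is needed. Second, both your proof and the paper's leave (ii) and (iii) genuinely dependent on irreducibility (your own diagonal example shows (iii) fails for a reducible flat connection with non-unitary $S^1$-holonomy); you are more candid about this than the paper, but the dependence should be stated as a hypothesis rather than deferred to a parenthetical.
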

\begin{proof}
Items i) and ii) follow from the fact that $\pi_1(\Si\ti S^1)=\pi_1(\Si)\ti\pi_1(S^1)$.
  
For iii), observe that $A^{\rho}_1$ and $\phi^{\rho}_1$ come from the contribution of $\pi_1(S^1)\to SL(n,\mbC)$. Since $\pi_1(S^1)$ is abelian,
and $A^{\rho}_1+i\phi^{\rho}_1$ is an unitary connection, we obtain that $\phi^{\rho}_1=0$. 
\end{proof}	

We now prove vanishing of the second part of the boundary contribution:
\begin{lemma}  Suppose that $(A, \phi, \phi_1)$ is a solution to the extended Bogomolny equations. 
\begin{itemize}
\item [i)] If $(A,\phi)$ satisfies the Nahm pole boundary conditions at $y=0$, with or without knot singularities, then
  $$
  \lim_{\ep\to0}\int_{S^1\ti\Si\ti\{\ep\}}\Tr(B_A\we\phi)=0;
 $$
\item [ii)] If $(A,\phi)$ converges to a flat $\SL(n,\CC)$ connection as $y \to \infty$, then
  $$
  \lim_{\ep\to0}\int_{S^1\ti\Si\ti (1/\ep)}  \Tr(B_A\we\phi)=0.
 $$
\end{itemize}
\end{lemma}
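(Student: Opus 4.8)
The plan is to read $B_A$ as the $dx_1$-component of the curvature $F_{\hA}$ of the four-dimensional connection $\hA$ on $M=S^1\ti\Si\ti\RP$, working in the gauge $\hA_1=\hA_y=\hP_y\equiv0$ fixed in this section. (For a solution in the literal sense $\hA=\pi^*A$ is $x_1$-independent, so $F_{\hA}$ has no $dx_1$-leg, $B_A\equiv0$, and there is nothing to prove; the substance is the case where $(\hA,\hP)$ merely solves the KW equations on $M$, which is what the $S^1$-invariance argument needs.) The structural observation I would exploit is that both near-boundary models — the leading term $A_0$ of Theorem \ref{expansions} (the Levi--Civita connection of the product $S^1\ti\Si$) in the ordinary Nahm-pole case, and Mikhaylov's knot model $A^{\md}_{\nk}$ near a knot — are independent of $x_1$ and carry no $dx_1$-component. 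In the gauge $\hA_1=0$ this forces the error term $a$ in $A=A_0+a$ (resp.\ $A=A^{\md}_{\nk}+a$) to have vanishing $dx_1$-component as well, so every quadratic and $d_{A_0}a_1$ contribution to $F_{\hA}(\partial_{x_1},\cdot)$ drops out and one is left with the clean identity $B_A=\pm\nabla_{x_1}a$. Hence $|B_A|$ on any slice $\{y=\mathrm{const}\}$ is controlled directly by the $|\nabla_{x_1}a|$-bound in the Remark following Theorem \ref{expansions}.

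Granting this, part i) without knots is immediate: $|B_A|=|\nabla_{x_1}a|\LS y^{2+\ep}$ while $|\phi|\LS y^{-1}$ (the $e/y$-term dominates), so the restriction of $\Tr(B_A\we\phi)$ to $\Si\ti\{\ep\}$ is pointwise $O(\ep^{1+\ep})$, and integrating over the compact $\Si$ and then over $x_1\in S^1$ gives $O(\ep^{1+\ep})\to0$. With knots I would split $\Si$ into the complement of small disks $D_\delta(p_i)$ about the knot points, treated exactly as before, and the disks themselves. On $D_\delta(p_i)\ti\{y=\ep\}$ I would pass to polar coordinates $(r,\theta)$ on $\Si$, writing $y=R\sin s$, $r=R\cos s$, so $R=\sqrt{r^2+\ep^2}$ and $s\asymp\ep/R$; the near-knot estimates then give $|B_A|\LS R^{-\ep}s^{2-\ep}$ and $|\phi|\LS R^{-1}s^{-1}$ (model-dominated), whence the integrand of $\Tr(B_A\we\phi)$ is $\LS R^{-1-\ep}s^{1-\ep}$. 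Integrating $r\,dr\,d\theta$, the annulus $\{\ep\le r\le\delta\}$ contributes $\LS\ep^{1-\ep}\log(1/\ep)$ and the core $\{r\le\ep\}$ (on which $R\asymp\ep$, $s\asymp1$) contributes $\LS\ep^{1-\ep}$; both tend to $0$, and the $x_1$-integration only adds a bounded factor.

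For part ii) I would use that, as $y\to\infty$, $(\hA,\hP)$ converges — smoothly on $S^1\ti\Si$, by elliptic bootstrapping for the flow equations \eqref{flowequation} — to a flat $\SL(n,\CC)$ connection $A^{\rho}+\phi^{\rho}$. Then, since $B_A$ is a first-order expression in the connection, $\|B_A(\cdot,1/\ep)\|_{C^0}\to\|B_{A^{\rho}}\|_{C^0}=0$ by Lemma \ref{limitflatSLCconnection}(i), while $\|\phi(\cdot,1/\ep)\|_{C^0}$ stays bounded, so $\bigl|\int_{\Si\ti\{1/\ep\}}\Tr(B_A\we\phi)\bigr|\LS\|B_A(\cdot,1/\ep)\|_{C^0}\,\Vol(\Si)\to0$, and integrating in $x_1$ over $S^1$ preserves this. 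That part is routine; the step I expect to require the most care is the near-knot estimate of part i): it is exactly the cancellation producing $B_A=\pm\nabla_{x_1}a$ that keeps a spurious $(R\cos s)^{-1}$ factor — which would otherwise enter through $\Si$-derivatives of $a_1$ and destroy the bound — out of the estimate, so one must genuinely work in the gauge $\hA_1=0$ and check that the polyhomogeneous bounds of Theorem \ref{expansions} remain valid there (the intertwining gauge transformation being polyhomogeneous and bounded up to the boundary).
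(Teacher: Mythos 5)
Your argument is correct and follows essentially the same route as the paper: both rest on the observation that the leading term of $A$ (the Levi--Civita/product term away from the knot, the $x_1$-independent model $A^{\md}_{\nk}$ near it) contributes nothing to $B_A$, so the polyhomogeneous remainder estimates of Theorem \ref{expansions} give $|B_A\we\phi|=o(1)$ on the slices, while part ii) reduces to Lemma \ref{limitflatSLCconnection}. Your annulus/core computation near the knot and the remark about checking the estimates in the gauge $\hA_1=0$ are more explicit than the paper's terse treatment, but they are refinements of the same proof, not a different one.
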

\begin{proof}
  First consider i). Away from knots, Theorem \ref{expansions} gives that $A\sim A_{LC}+\MO(y^{2-\ep})$ for any $\ep>0$,
  which implies that $B_A=B_{A_{LC}}+  \MO(y^{2-\ep})+dy \we (B_{A})_y$. (The `LC' subscript denotes Levi-Civita.) 
 The $dy$ component vanishes in the integration so we may disregard it. In addition,  since we are using the
 product metric, $B_{A_{LC}}=0$.  Finally, since $\phi\sim\frac{e}{y}$, we conclude that $B_A\we\phi\sim \MO(y^{1-\ep})$, so there are no
 boundary contributions in this region.
	
 Near a knot $K$, we use spherical coordinates $(R,s,x_1)$ as before, and consider the boundary term as $R\to 0$. By Theorem \ref{expansions},
 $B_A\sim B_{A^{\Mod}}+\MO(1)\sim \MO(1)$ because $B_{A^{\Mod}}$. In addition, $\phi\sim R^{-1}$, so $B_A\we\phi\sim R^{-1}$.
 Since the volume form is $R^2dR ds dx_1$, this boundary contribution vanishes too. 
	
Part ii) follows directly from the previous lemma.
\end{proof}

The other terms in $\chi$ are derivatives with respect to $x_1$, and hence vanish once we integrate over $S^1$. 

\begin{corollary}
\label{boundarytermvanishes}
Under the previous assumptions, $\int_M\chi=0$.
\end{corollary}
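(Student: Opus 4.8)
The plan is to assemble the statement from the displayed identity of the preceding corollary,
\[
\int_{\Me}\chi = \int_{\Me}\na_1\Tr\!\left(2F_A\we\phi-\frac{2}{3}\phi^3-2\phi\we\st d_A\phi_1+\phi_1\we d_A\st\phi\right)\we dx_1 + 2\int_{\Me}d\,\Tr(B_A\we\phi)\we dx_1 ,
\]
together with the preceding lemma on the vanishing of the boundary integrals of $\Tr(B_A\we\phi)$. I would show that the first term on the right is identically $0$ for every $\ep\in(0,1)$ and that the second tends to $0$ as $\ep\to 0$; recall that $\int_M\chi$ is by definition $\lim_{\ep\to 0}\int_{\Me}\chi$, which is the only quantity entering the Weitzenb\"ock computation, so no question of absolute integrability over $M$ arises.

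For the first term, note that $\Tr(\cdot)$ is scalar-valued, so $\na_1\Tr(\ldots)$ is simply the ordinary $x_1$-derivative $\partial_{x_1}\Tr(\ldots)$ of a smooth form on $M$ (this is even more transparent in the gauge $\hA_1\equiv 0$ we are using, where $\na_1=\partial_{x_1}$ already on $\gpp$-valued tensors). Integrating first around the circle factor via Fubini and using $\int_{S^1}\partial_{x_1}f\,dx_1=0$ for every smooth $f$ on $S^1$ — the circle being closed — kills this term. No boundary contribution is produced in the process: the two boundary pieces of $\Me$ are $S^1\ti\Si\ti\{\ep\}$ and $S^1\ti\Si\ti\{\ep^{-1}\}$, and on each of them the $x_1$-integration still runs over the full circle.

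For the second term, fix $\ep\in(0,1)$. Since $\Si$ is closed and the knot locus sits in $\{y=0\}$, which is disjoint from $[\ep,\ep^{-1}]$, the fields are smooth on the compact manifold with boundary $\Me=S^1\ti\Si\ti[\ep,\ep^{-1}]$, and $d\,\Tr(B_A\we\phi)\we dx_1=d\bigl(\Tr(B_A\we\phi)\we dx_1\bigr)$ because $dx_1$ is closed. Stokes' theorem gives
\[
2\int_{\Me}d\,\Tr(B_A\we\phi)\we dx_1 = 2\int_{S^1\ti\Si\ti\{\ep^{-1}\}}\Tr(B_A\we\phi) - 2\int_{S^1\ti\Si\ti\{\ep\}}\Tr(B_A\we\phi) ,
\]
and by parts i) and ii) of the preceding lemma each of the two boundary integrals on the right tends to $0$ as $\ep\to 0$. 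Combining the two steps gives $\lim_{\ep\to 0}\int_{\Me}\chi=0$, which is the claim.

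I do not expect any real obstacle here: the substantive content is entirely contained in the preceding lemma, which controls $B_A\we\phi$ at $y=0$ (both away from the knot, via $A\sim A_{LC}$, and near the knot, via $A\sim A^{\md}_{\nk}$) and as $y\to\infty$ (via convergence to a flat connection). The only points that need a little attention are bookkeeping: checking that the $x_1$-derivative term leaves no boundary remainder — immediate since $\partial S^1=\emptyset$ — and that Stokes' theorem is being invoked on the truncated cylinder $\Me$ rather than on $M$ itself, which is legitimate for every $\ep>0$ precisely because the singular locus $\{y=0\}$ has been excised from the domain of integration.
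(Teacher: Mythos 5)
Your argument is correct and matches the paper's own (very brief) proof: the paper likewise disposes of the $\na_1$-terms by integrating over the closed circle factor and of the exact term by Stokes on $\Me$ combined with parts i) and ii) of the preceding lemma. Your extra bookkeeping about $\na_1\Tr=\partial_{x_1}\Tr$ and about applying Stokes on the truncated cylinder rather than on $M$ is sound and only makes explicit what the paper leaves implicit.
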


\subsection{$S^1$-invariance}
In summary, we may now conclude the 
\begin{theorem}
\label{S1invariant}
Any solution to the KW equations over $S^1\ti \Si\ti \RP$ satisfying Nahm pole boundary condition at $y=0$ (possibly with knot singularities
at $K=S^1\ti D\ti \{0\}$), and which converges to a flat $\SL(n,\CC)$ connection as $y \to \infty$, is $S^1$ invariant and reduces to
a solution of the extended Bogomolny equations. In addition, $A_1\equiv 0$. 
\end{theorem}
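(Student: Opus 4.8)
The plan is to feed a genuine solution into the Weitzenb\"ock identity established in this section and to exploit the vanishing of the boundary term $\int_M\chi$.

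First I would fix the gauge. By Proposition \ref{vanphiy} any solution as in the statement has $\phi_y\equiv 0$, and a temporal gauge transformation arranges $\hA_y\equiv 0$; this places us in the framework of this subsection, with $\na_1$ read as the $S^1$-covariant derivative $\pa_{x_1}+[\hA_1,\cdot]$ so that no hypothesis on $\hA_1$ is yet required (the Weitzenb\"ock computations above use only $\hA_y=\hP_y=0$ together with the Bianchi identity \eqref{Bianchi}). Since $(\hA,\hP)$ solves the KW equations, $|\mathrm{KW}|^2\equiv 0$, so integrating the Weitzenb\"ock identity over $\Me=S^1\ti\Si\ti[\ep,\ep^{-1}]$ yields
\[
0=\int_{\Me}\Big(|F_A-\phi\we\phi-\st d_A\phi_1|^2+|\na_1\phi|^2+|B_A|^2+|[\phi,\phi_1]+\st d_A\phi|^2+|d_A^{\st}\phi|^2+|\na_1\phi_1|^2\Big)+\int_{\Me}\chi .
\]
Letting $\ep\to 0$, Corollary \ref{boundarytermvanishes} gives $\int_M\chi=0$ (the $\na_1$-exact pieces of $\chi$ integrate to zero over $S^1$; the $y=\ep$ boundary term vanishes by the Nahm pole expansions of Theorem \ref{expansions}; the $y=\ep^{-1}$ boundary term vanishes by Lemma \ref{limitflatSLCconnection}). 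Hence each of the six nonnegative integrands vanishes identically on $M$.

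Next I would read off the consequences. The vanishing of $F_A-\phi\we\phi-\st d_A\phi_1$, of $[\phi,\phi_1]+\st d_A\phi$, and of $d_A^{\st}\phi$ is precisely the extended Bogomolny system \eqref{Eq_EBE} for the triple $(A,\phi,\phi_1)$. The vanishing of $\na_1\phi$ and $\na_1\phi_1$ says that $\phi$ and $\phi_1$ are covariantly constant in the $S^1$ direction, and the vanishing of $B_A$ says that $F_{\hA}$ has no $dx_1$-component; by \eqref{Bianchi} this also forces $\na_1 F_A=0$. Thus, up to the gauge transformation along $S^1$ generated by $\hA_1$, the whole configuration is independent of $x_1$. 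To finish the reduction I would then remove $\hA_1$: the Nahm pole condition pins the leading term of $\hA$ at $y=0$ to the connection intertwined with the Levi-Civita connection of the product metric, which is trivial along $S^1$; since $B_A\equiv0$, the $S^1$-holonomy of $\hA$ is covariantly constant in the $(z,y)$ variables, hence equal everywhere to its trivial value at $y=0$, so a further gauge transformation (preserving $\hA_y=\hP_y=0$ and the boundary data) achieves $\hA_1\equiv0$. In that gauge $\na_1=\pa_{x_1}$, so $A$, $\phi$ and $\phi_1$ are genuinely $x_1$-independent and descend to a solution of the extended Bogomolny equations on $\Si\ti\RP$; the corresponding EBE Nahm pole (resp.\ knot) boundary condition holds by definition, since the four-dimensional solution we began with satisfies it.

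The step I expect to be the main obstacle is this last one --- trivializing $\hA_1$ globally over $S^1\ti\Si\ti\RP$ while keeping the Nahm pole normalization at $y=0$ and the flat $\SL(n,\CC)$ limit at $y=\infty$ undisturbed; the essential input is that $B_A\equiv0$ together with the explicit Nahm pole leading term controls the $S^1$-holonomy. Everything preceding that is a formal consequence of the Weitzenb\"ock identity and Corollary \ref{boundarytermvanishes}, so the real content lies in the boundary-term vanishing, which has already been carried out above.
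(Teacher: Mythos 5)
Your proposal is correct, and its first and main stage --- feeding the solution into the Weitzenb\"ock identity, invoking Corollary \ref{boundarytermvanishes} to kill $\int_M\chi$, and reading off $\mathrm{EBE}(A,\phi,\phi_1)=0$ together with $\na_1\phi=\na_1\phi_1=0$ and $B_A=0$ --- is exactly the paper's argument. Where you genuinely diverge is in the endgame, i.e.\ in upgrading ``covariantly constant along $S^1$'' to ``$S^1$-invariant with $A_1\equiv 0$.'' The paper does this by citing the uniqueness halves of the Kobayashi--Hitchin correspondences (Theorems \ref{Thm_KobayashiHitchinNP} and \ref{Thm_KobayashiHitchinKnot}) applied slice by slice, using Lemma \ref{limitflatSLCconnection} to see that the limiting data at $y=\infty$ is $x_1$-independent, and then kills $A_1$ via $d_AA_1=[\phi,A_1]=0$ and irreducibility of Nahm pole solutions. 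You instead run a holonomy argument: $B_A=0$ makes the $S^1$-holonomy covariantly constant in $(z,y)$, and the Nahm pole regularity of Theorem \ref{expansions} pins it down at $y=0$. Your route has the advantage of being more self-contained --- it does not make this theorem logically dependent on the classification machinery it is meant to feed into --- at the cost of one fine point you elide: the regularity theorem identifies $A_0$ with the Levi-Civita connection only \emph{under the dreibein intertwining}, so it controls $A_0$ just on the sub-bundle spanned by the $\mft_a$. To conclude that the $S^1$-holonomy at $y=0$ acts trivially on $\gpp$ you must add that this holonomy fixes each $\mft_a$, hence lies in the centralizer of the principal $\su(2)$, which is the center of $\SUn$ --- the same irreducibility input the paper invokes at its last step. (Both your argument and the paper's silently ignore a possible residual central $S^1$-holonomy on $P$ itself, which is invisible on the adjoint bundle and harmless for the fields.) With that one observation added, your proof is complete.
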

\begin{proof}
 By Corollary \ref{boundarytermvanishes}, any solution to the KW equations with these boundary and asymptotic conditions must satisfy
 $$
 \mathrm{EBE}(A,\phi,\phi_1)=0,\ \na_1\phi=0,\ B_A=0,\ \na_1\phi_1=0,
 $$
where $\mathrm{EBE}$ is the extended Bogomolny equation operator.
		
By Lemma \ref{limitflatSLCconnection}, up to gauge we can assume that $(A,\phi,\phi_1)$ converges to $(A^{\rho},\phi^{\rho},0)$ as
$y\to\infty$, where $A^{\rho},\phi^{\rho}$ is $S^1$ invariant.   Since $(A,\phi,\phi_1)$ is a solution to the \EBE,
Theorem \ref{Thm_KobayashiHitchinNP} and Theorem \ref{Thm_KobayashiHitchinKnot} imply that $(A,\phi,\phi_1)$ is $S^1$ invariant.
From $B_A= \na_1\phi=0$, we obtain $d_AA_1=0$ and $[\phi,A_1]=0$. Irreducibility of solutions to the \EBE with Nahm pole boundary conditions
give finally that $A_1=0$.
\end{proof}

The projection map  $\pi:S^1\ti\Si\ti\RP\to\Si\ti\RP$ naturally induces morphisms
\begin{equation*}
  \begin{split}
    &\pi^{\st}:\MM^{\mEBE}_{\NP}\to\MM^{\mKW}_{\NP}, \\
    & \pi^{\st}:\MM^{\mEBE}_{\NPK}\to\MM^{\mKW}_{\NPK}
  \end{split}
\end{equation*}
We obtain from this the
\begin{corollary}
$\pi^{\st}:\MM^{\mEBE}_{\NP}\to\MM^{\mKW}_{\NP}$ and $\pi^{\st}:\MM^{\mEBE}_{\NPK}\to\MM^{\mKW}_{\NPK}$ are bijections.
\end{corollary}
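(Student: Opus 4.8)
The plan is to prove surjectivity and injectivity of $\pi^{\st}$ separately, with essentially all of the analytic content already supplied by Theorem \ref{S1invariant}. First I would observe that $\pi^{\st}$ is well-defined on moduli spaces: if $\bar g$ is a gauge transformation on $\Si\ti\RP$ relating two \EBE solutions, then $\pi^{\st}\bar g$ relates their pullbacks, and it preserves the Nahm pole conditions (with or without knots) simply because, by definition, the \EBE boundary conditions are the pullbacks of the KW ones.

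For surjectivity, I would take a class $[(\hA,\hP)]\in\MM^{\mKW}_{\NP}$ (resp.\ $\MM^{\mKW}_{\NPK}$) and choose a representative in a gauge with $\hA_1=\hA_y=0$, so that $\hP_y\equiv 0$ by Proposition \ref{vanphiy}. Theorem \ref{S1invariant} then says that the remaining fields are independent of $x_1$ and that $\hA_1\equiv 0$; hence $(\hA,\hP)=\pi^{\st}(A,\phi,\phi_1)$ for the induced solution $(A,\phi,\phi_1)$ of the \EBE, which by the same theorem satisfies the correct Nahm pole and $y\to\infty$ conditions. Thus $[(\hA,\hP)]$ lies in the image of $\pi^{\st}$.

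For injectivity, suppose $(A,\phi,\phi_1)$ and $(A',\phi',\phi_1')$ are \EBE solutions with the prescribed boundary behaviour whose pullbacks $\pi^{\st}(A,\phi,\phi_1)$ and $\pi^{\st}(A',\phi',\phi_1')$ are related by some $g\in\MG_0$ on $M=S^1\ti\Si\ti\RP$. The key point is that both pulled-back connections have vanishing $x_1$-component; since that component transforms under $g$ by $0\mapsto g^{-1}\pa_{x_1}g$, we conclude $\pa_{x_1}g=0$, so $g$ is $S^1$-invariant. Hence $g$ descends to a gauge transformation $\bar g$ on $\Si\ti\RP$, which preserves the Nahm pole boundary conditions (again by the definition of the \EBE conditions as pullbacks) and carries $(A,\phi,\phi_1)$ to $(A',\phi',\phi_1')$. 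Therefore the two define the same class in $\MM^{\mEBE}_{\NP}$ (resp.\ $\MM^{\mEBE}_{\NPK}$), and $\pi^{\st}$ is injective.

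The only delicate point — and the mild obstacle — is ensuring that the intertwining gauge transformation may be taken $S^1$-invariant; but as just explained this is forced by $A_1\equiv 0$ (Theorem \ref{S1invariant}), so there is no genuine difficulty once the Weitzenb\"ock argument of that theorem is in hand. A secondary bookkeeping issue is to check that the residual gauge freedom used to reach the gauge $\hA_1=\hA_y=0$ in the surjectivity step does not disturb the normalizations at $y=0$ and $y=\infty$; this too is part of the content of Theorem \ref{S1invariant} together with Proposition \ref{vanphiy}.
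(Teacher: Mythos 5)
Your proof is correct and follows exactly the route the paper intends: the corollary is stated there without an explicit proof as an immediate consequence of Theorem \ref{S1invariant}, and your argument simply fills in the standard details (surjectivity from $S^1$-invariance and $A_1\equiv 0$, injectivity from $\partial_{x_1}g=0$ forcing the intertwining gauge transformation to descend). No gaps.
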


\section{Classification}
We are now able to complete our main theorem.

\subsection{Case 1: $\Si=S^2$}
\begin{proposition}
There is no Nahm pole solution to the KW equations on $S^1\ti S^2\ti\RP$.
\end{proposition}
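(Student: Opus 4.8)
The plan is to reduce the statement to the extended Bogomolny equations via the $S^1$-invariance theorem already proved, and then to observe that the relevant $\mEBE$ moduli space is empty over a genus-zero surface.

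First I would apply Theorem~\ref{S1invariant}: any solution of the KW equations on $S^1\ti S^2\ti\RP$ which satisfies the Nahm pole boundary condition at $y=0$ (with no knot) and converges to a flat $\SL(n,\CC)$ connection as $y\to\infty$ is automatically $S^1$-invariant and reduces to a solution $(A,\phi,\phi_1)\in\MM^{\mEBE}_{\NP}$ of the extended Bogomolny equations on $S^2\ti\RP$. Hence it suffices to show that $\MM^{\mEBE}_{\NP}=\emptyset$ when $\Si=S^2$.

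For this I would invoke Theorem~\ref{Thm_KobayashiHitchinNP}, by which $\PNP$ identifies $\MM^{\mEBE}_{\NP}$ with the Hitchin component $\MM_{\mathrm{Hit}}$; so it is enough to check that $\MM_{\mathrm{Hit}}=\emptyset$ when $\Si = S^2 = \mathbb{P}^1$. This follows immediately from the explicit description~\eqref{HitchincomponentHiggsfield}: over $\mathbb{P}^1$ one has $K=\mathcal O(-2)$, hence $q_i\in H^0(\mathbb{P}^1,K^i)=H^0(\mathbb{P}^1,\mathcal O(-2i))=0$ for every $i\geq 2$, so the Higgs field in~\eqref{HitchincomponentHiggsfield} is forced to be the fixed strictly upper-triangular nilpotent matrix, whose first column vanishes. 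Therefore the top summand $L_0:=K^{-\frac{n-1}{2}}=\mathcal O(n-1)$ of $\ME$ is a $\vp$-invariant holomorphic subbundle with $\deg L_0=n-1\geq 1>0$ (here $n\geq 2$, since a principal $\su(2)\hookrightarrow\su(n)$ is required to define the Nahm pole), contradicting the stability inequality $\deg V<0$. Thus $\ME$ is not even semistable, $\MM_{\mathrm{Hit}}=\emptyset$, and so $\MM^{\mEBE}_{\NP}=\emptyset$. Equivalently, one may simply note, via the nonabelian Hodge correspondence (Theorem~\ref{nonabelianhodge} together with \cite{corlette1988}), that a stable $\SL(n,\CC)$ Higgs bundle over $S^2$ would produce an irreducible representation $\pi_1(S^2)=\{1\}\to\SL(n,\CC)$, which is impossible for $n\geq 2$. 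In either case there is no Nahm pole solution of the KW equations on $S^1\ti S^2\ti\RP$, which is the assertion.

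I do not expect a genuine obstacle here: each step is a citation of a result established earlier or in the cited literature. The one point that merits care is that Theorem~\ref{Thm_KobayashiHitchinNP} and the nonabelian Hodge correspondence are being applied on a genus-zero surface, where they degenerate to the statement that the pertinent moduli space is empty; the short direct computation above exhibits this emptiness explicitly and so keeps the argument self-contained.
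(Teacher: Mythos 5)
Your proposal is correct and follows essentially the same route as the paper: reduce to the extended Bogomolny equations via Theorem~\ref{S1invariant} and then observe that no suitable stable Higgs bundle (equivalently, no nonempty Hitchin component) exists over $S^2$, which the paper simply cites from \cite{Hitchin1987Selfdual}. Your explicit verification that $H^0(\mathbb{P}^1,K^i)=0$ forces a destabilizing $\vp$-invariant subbundle, and the alternative argument via $\pi_1(S^2)=\{1\}$, are just self-contained expansions of that citation.
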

\begin{proof}
  By Theorem \ref{S1invariant}, all such solutions must be $S^1$-invariant and reduce to solutions of the Extended Bogomolny
  equations. Hence any such solution would lead to a stable Higgs bundle over $S^2$ with nonvanishing Higgs field.
  However, these do not exist \cite{Hitchin1987Selfdual}.
\end{proof}

\subsection{Case 2: $\Si=T^2$}
We next classify Nahm pole solutions over $T^3\ti\RP$.

Let $M=T^3\ti\RP$ with flat metric $g$.  If $A$ is a connection, then $d_A=\na_A^{\perp}+\na_y$, where $\na^{\perp}_A$ is the covariant
derivative on $T^3$. 

We quote the following identity for solutions of the KW equations from \cite{RyosukeEnergy,MazzeoWitten2013}:
\begin{equation}
\begin{split}
  \int_{M_{\ep}}& |KW(A,\Phi)|^2 \\
  & =\int_{M_{\ep}}(|F_A|^2+|\na_A^{\perp}|^2+| \na_y\phi+\st \phi\we\phi|^2+
  \langle \mathrm{Ric}(\phi), \phi \rangle ) +2\int_{\pa M_{\ep}}\phi\we F_A, 
\label{KWindentities2}
\end{split}
\end{equation}
where $M_\ep:=T^3\ti (\ep,\frac{1}{\ep})$ and $\st$ is the Hodge star operator on $T^3$. 

\begin{proposition}
If $(A,\Phi)$ is a solution to the KW equations over $T^3\ti\RP$ satisfying the Nahm pole boundary conditions, then 
\begin{equation}
\begin{split}
F_A=0,\ ;\na_A^{\perp}\Phi=0,\ \na_y\phi+\st \phi\we\phi=0.
\label{reducedNahmpoletorus}
\end{split}
\end{equation}
\end{proposition}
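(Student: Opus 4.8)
The plan is to feed the hypotheses straight into the energy identity \eqref{KWindentities2}. Since the metric on $T^3$ is flat, its Ricci tensor vanishes, so the term $\langle\Ric(\phi),\phi\rangle$ drops out; and since $(A,\Phi)$ solves the Kapustin--Witten equations, the left-hand side of \eqref{KWindentities2} is zero on every $M_\ep := T^3\ti(\ep,1/\ep)$. Thus
\[
0 = \int_{M_\ep}\left(|F_A|^2 + |\na_A^\perp\Phi|^2 + |\na_y\phi+\st\phi\we\phi|^2\right) + 2\int_{\partial M_\ep}\Tr(\phi\we F_A),
\]
and everything reduces to showing that the boundary contribution tends to $0$ as $\ep\to0$. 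Writing $J(y):=\int_{T^3\ti\{y\}}\Tr(\phi\we F_A)$, the boundary integral equals $J(1/\ep)-J(\ep)$ up to sign, so it suffices to prove $J(\ep)\to0$ and $J(1/\ep)\to0$.

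For the inner boundary I would invoke the regularity statement, Theorem \ref{expansions}: on flat $T^3$ the leading connection $A_0$ is the Levi--Civita connection, which is itself flat, so $F_{A_0}=0$, and the derivative estimates quoted there give $F_A=\MO(y^2)$ on the slices while $\phi=\frac{e}{y}+\MO(y)=\MO(y^{-1})$; since $T^3$ has fixed finite volume this yields $|J(\ep)|=\MO(\ep)\to0$. (The flatness of $A_0$ is exactly what rules out an $\MO(\ep^{-1})$ contribution here.) For the outer boundary, recall that, as in the standing hypotheses of the classification theorems, $(A,\Phi)$ converges as $y\to\infty$ to a $y$-independent flat $\SL(n,\CC)$ connection $(A_\infty,\phi_\infty)$; hence $J(1/\ep)\to J_\infty:=\int_{T^3}\Tr(\phi_\infty\we F_{A_\infty})$, a finite real number. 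Substituting these limits back into the identity shows $\int_M(|F_A|^2+|\na_A^\perp\Phi|^2+|\na_y\phi+\st\phi\we\phi|^2)=-2J_\infty<\infty$; in particular $\int_M|F_A|^2<\infty$. But $F_A(\cdot,y)\to F_{A_\infty}$ in $L^2(T^3)$ while $T^3\ti[y_0,\infty)$ has infinite volume, so this forces $F_{A_\infty}=0$ --- otherwise the integral would diverge. Therefore $J_\infty=\int_{T^3}\Tr(\phi_\infty\we 0)=0$, the nonnegative bulk integrand has zero integral over $M$, and hence each of its three summands vanishes identically, which is exactly \eqref{reducedNahmpoletorus}.

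I expect the only real work to be the outer-boundary analysis: one needs the convergence $(A,\Phi)\to(A_\infty,\phi_\infty)$ to be uniform enough in the $T^3$-directions (say $\MC^1$ on the slices) both to pass to the limit in $J(y)$ and to run the ``finite $L^2$-curvature on an infinite cylinder forces $F_{A_\infty}=0$'' step; any subtlety in what ``converges to a flat connection'' precisely means will surface there. By contrast, the vanishing of the inner-boundary term and the algebraic rearrangement of \eqref{KWindentities2} are routine once the asymptotics of Theorem \ref{expansions} are in hand.
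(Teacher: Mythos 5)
Your proof is correct and follows the same skeleton as the paper's: substitute the solution into the energy identity \eqref{KWindentities2}, note that $\Ric\equiv 0$ on flat $T^3$, and kill the two boundary terms. The treatment of the inner boundary is identical to the paper's (there the reference is to \cite{Heexpansion18} for $F_A=F_{A_{LC}}+\MO(y^2)$ with $F_{A_{LC}}=0$, but the estimates quoted after Theorem \ref{expansions} give the same conclusion). Where you genuinely diverge is at $y=\infty$: the paper disposes of $\lim_{\ep\to 0}\int_{T^3\ti\{1/\ep\}}\Tr(\phi\we F_A)$ by invoking Lemma \ref{limitflatSLCconnection}, i.e.\ the structural fact that a flat $\SL(n,\CC)$ connection over a manifold with abelian fundamental group is, up to gauge, translation-invariant with $F_{A^\rho}=0$, so the limiting integrand vanishes outright. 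You instead only use that the limit $J_\infty$ of the slice integrals is finite, feed that back into the identity to get $\int_M|F_A|^2<\infty$, and then argue that finite $L^2$ curvature on a cylinder of infinite volume forces $F_{A_\infty}=0$ and hence $J_\infty=0$. This is a valid and somewhat more self-contained route (it does not use the abelianness of $\pi_1(T^3)$ at this stage), at the cost of needing the convergence $(A,\Phi)\to(A_\infty,\phi_\infty)$ to be strong enough ($L^2$ on slices for $F_A$, and enough control to pass to the limit in $J(y)$) --- a point you correctly flag, and which is implicit in the paper's asymptotic hypothesis as well. Both arguments close the proof; the paper's is shorter because Lemma \ref{limitflatSLCconnection} is already on hand for the Weitzenb\"ock argument in Section 4.
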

\begin{proof}
  From \cite{Heexpansion18}, $F_A\sim F_{A_{LC}}+\MO(y^2) = \MO(y^2)$, where $A_{LC}$ is the Levi-Civita connection on $T^3$, 
  but since the metric on $T^3$ is flat, $F_{A_{LC}}=0$.  This shows that $\lim_{\ep\to0}\int_{T^3\ti\{\ep\}}\phi\we F_A=0$.
  Furthermore, since $(A,\Phi)$ converges to a flat connection on $T^3$, Lemma \ref{limitflatSLCconnection} implies that
$\lim_{\ep\to0}\int_{T^3\ti\{\frac{1}{\ep}\}}\phi\we F_A=0$.
\end{proof}

\begin{proposition}
  Let $e$ be a dreibein which is parallel along $T^3$. 
  Then $(0,\frac{e}{y})$ is the only solution to \eqref{reducedNahmpoletorus}.
\end{proposition}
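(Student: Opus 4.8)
The plan is to use the three equations in \eqref{reducedNahmpoletorus} to strip away all dependence on the $T^3$ directions, reducing everything to a system of ODEs in $y$ — a copy of Nahm's equations with a pole at $y=0$ — and then to quote the uniqueness theorem of \cite{MazzeoWitten2013}. Throughout I use the standing assumption $\phi_y\equiv 0$ and work in temporal gauge $A_y\equiv 0$, so that $\Phi=\phi$ is a $\gpp$-valued $1$-form along $T^3$ depending on the parameter $y$.

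First I would show that $A$ is gauge-trivial. In temporal gauge $F_A=F_{A^\perp}+dy\we\pa_y A^\perp$, so $F_A=0$ forces $A^\perp$ to be flat and independent of $y$, and in particular to have $y$-independent holonomy. On the other hand, Theorem \ref{expansions} says that near $y=0$, in a suitable gauge, $A$ equals $A_{LC}+\MO(y^\ep)$, and $A_{LC}$ is the trivial connection since the metric on $T^3$ is flat; comparing the (gauge-invariant) holonomies on the slices $T^3\ti\{y\}$ as $y\to 0$ then shows that $A^\perp$ has trivial holonomy. Hence, after a $y$-independent gauge transformation, $A\equiv 0$, and I fix this gauge.

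With $A\equiv 0$, the equation $\na_A^\perp\Phi=0$ says precisely that $\Phi$ is constant along $T^3$: writing $\Phi=\phi=\sum_{a=1}^3\phi_a(y)\,e_a^*$ in the given parallel coframe, each $\phi_a$ is a map $\RP\to\mfg$. Using $\st(e_a^*\we e_b^*)=e_c^*$ for $(a,b,c)$ cyclic, the remaining equation $\na_y\phi+\st\phi\we\phi=0$ becomes Nahm's equations $\dot\phi_c+[\phi_a,\phi_b]=0$ ($(a,b,c)$ cyclic), of which $\phi_a=\mathfrak{t}_a/y$ (i.e.\ $\Phi=e/y$) is a solution because $[\mathfrak{t}_a,\mathfrak{t}_b]=\epsilon_{abc}\mathfrak{t}_c$. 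The Nahm pole condition becomes $\phi_a(y)=\mathfrak{t}_a/y+\MO(y^{-1+\ep})$ as $y\to0$, and the hypothesis that $(A,\Phi)$ converges to a flat $\SL(n,\CC)$ connection as $y\to\infty$ forces each $\phi_a(y)$ to tend to a limit.

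The hard part will be the last step: showing that the only solution of this ODE system with these boundary conditions is $\phi_a(y)=\mathfrak{t}_a/y$. This is exactly the translation-invariant instance of the Nahm pole problem for the Kapustin--Witten equations on $\RR^3\ti\RP$, so the uniqueness theorem of \cite{MazzeoWitten2013} applies and gives $\Phi=e/y$; together with the previous two steps this yields $(A,\Phi)=(0,e/y)$. The behavior at $y\to\infty$ is genuinely needed for this rigidity, not merely to kill the boundary term in the preceding proposition: already in the sub-family $\phi_a=f_a(y)\mathfrak{t}_a$, where the system collapses to $\dot f_c=-f_af_b$ with the conserved quantities $f_a^2-f_b^2$, one sees that the model $f_a\equiv 1/y$ is singled out among the pole solutions only once the asymptotics at infinity are imposed, and the general case is the content of \cite{MazzeoWitten2013}.
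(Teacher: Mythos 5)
Your argument is correct, and its essential content is the same as the paper's: both proofs reduce the system \eqref{reducedNahmpoletorus} to the Nahm ODE in $y$ with a pole at $y=0$ and boundedness as $y\to\infty$, and then quote uniqueness for that ODE (the \cite{MazzeoWitten2013} rigidity). The one genuine difference is the order of operations and the mechanism for killing $A^{\perp}$. The paper solves the Nahm equation first, pointwise over $T^3$ (in temporal gauge $\na_y\phi+\st\phi\we\phi=0$ is an ODE at each point), obtaining $\phi=e/y$, and only then uses $\na_A^{\perp}\Phi=0$: since $e$ spans a principal $\su(2)$ whose centralizer in $\su(n)$ is trivial, $\na_A^\perp e=0$ with $de=0$ forces $A^{\perp}=0$. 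You instead dispose of the connection first: $F_A=0$ in temporal gauge makes $A^{\perp}$ a $y$-independent flat connection on $T^3$, and the regularity theorem (leading coefficient of $A$ is the Levi--Civita connection, which is trivial on flat $T^3$) pins its holonomy class to the identity, so $A\equiv 0$ after a $y$-independent gauge change; then $\na^{\perp}\Phi=0$ literally says the $\phi_a$ depend on $y$ alone and the ODE reduction is immediate. Your ordering has the mild advantage of not having to argue that the pointwise-in-$T^3$ Nahm solutions assemble into a single parallel dreibein, at the cost of the (correct, but slightly longer) holonomy comparison; the key analytic input is identical in both routes.
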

\begin{proof}
  Use the temporal gauge in the $y$-direction, so $\na_y=\pa_y$. Then $\na_y\phi+\st \phi\we\phi=0$ is just the Nahm equations.
  Uniqueness of solutions to the Nahm equations with these boundary conditions implies that $\phi \equiv \frac{e}{y}$ for
  some dreibein $e$.
  Up to a unitary gauge transformation, we can write $e=\sum_{i=1}^3dx_i \mft_i$ where $de=0$, $dx_i$ is an orthogonal basis of
  $T^{\st}T^3$ and the triplet $\mft_i\in\gpp$ satisfies $[\mft_i,\mft_j]=\ep_{ijk}\mft_k$. Finally, $\na_A^{\perp}\Phi=0$ together with
  $de=0$ implies that $A^{\perp}=0$.
\end{proof}

\subsection{Case 3: $g(\Si)>1$}
\begin{proposition}	Let $(A,\Phi)$ be a solution to the KW equations on $S^1\ti\Si\ti\RP_y$ satisfying Nahm pole boundary conditions
and which converges to a flat $\SL(n,\CC)$ connection $(A^\rho, \phi^\rho)$ as $y\to\infty$. If $g(\Si)>1$, then there exists a unique
solution if and only if $\rho$ is $S^1$ independent and lies in the Hitchin component. 
\end{proposition}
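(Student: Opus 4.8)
The plan is to assemble this proposition from results already established, by tracking the $y\to\infty$ limit through the $S^1$-reduction. The two main inputs are Theorem \ref{S1invariant}, together with the corollary that $\pi^{\st}$ induces a bijection $\MM^{\mEBE}_{\NP}\to\MM^{\mKW}_{\NP}$, and the Kobayashi--Hitchin correspondence of Theorem \ref{Thm_KobayashiHitchinNP}, which identifies $\MM^{\mEBE}_{\NP}$ with the Hitchin component $\MM_{\Hit}$ through the limiting-data map $\PNP$ and asserts that there is no solution at all over Higgs bundles outside $\MM_{\Hit}$.

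For the ``only if'' direction, suppose $(A,\Phi)$ is a KW solution with the stated boundary conditions. By Theorem \ref{S1invariant} it is $S^1$-invariant, $A_1\equiv 0$, and descends to an element $(A,\phi,\phi_1)\in\MM^{\mEBE}_{\NP}$; combined with part (iii) of Lemma \ref{limitflatSLCconnection}, after a unitary gauge change the limiting flat connection $(A^\rho,\phi^\rho)$ satisfies $A_1^\rho=\phi_1^\rho=0$, hence is pulled back from $\Si$, so $\rho$ is $S^1$-independent. By Theorem \ref{Thm_KobayashiHitchinNP} the limiting Higgs bundle $\PNP(A,\phi,\phi_1)$ necessarily lies in $\MM_{\Hit}$; since this Higgs bundle is precisely the one that nonabelian Hodge (Theorem \ref{nonabelianhodge}) attaches to $\rho|_{\pi_1(\Si)}$, the representation $\rho$ lies in the Hitchin component.

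For the ``if'' direction, suppose $\rho$ is $S^1$-independent with $\rho|_{\pi_1(\Si)}$ in the Hitchin component; under Theorem \ref{nonabelianhodge} it corresponds to a Higgs bundle $(\ME,\vp)\in\MM_{\Hit}$, which is in particular stable. Part (i) of Theorem \ref{Thm_KobayashiHitchinNP} produces the unique $(A,\phi,\phi_1)\in\MM^{\mEBE}_{\NP}$ with $\PNP(A,\phi,\phi_1)=(\ME,\vp)$, and its image under the bijection $\pi^{\st}\colon\MM^{\mEBE}_{\NP}\to\MM^{\mKW}_{\NP}$ (equivalently, its pullback along $\pi$) is a KW Nahm pole solution converging to $\rho$. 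Uniqueness up to unitary gauge is transported by the same bijection: part (i) of Theorem \ref{Thm_KobayashiHitchinNP} gives uniqueness in $\MM^{\mEBE}_{\NP}$, which becomes uniqueness in $\MM^{\mKW}_{\NP}$.

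The step most in need of care --- though not deep --- is the limiting-data identification in the ``only if'' direction: one must confirm that after the gauge normalizations of Lemma \ref{limitflatSLCconnection} the $y\to\infty$ limit genuinely has trivial $S^1$-holonomy, so that ``lies in the Hitchin component'' is a well-posed condition on $\rho|_{\pi_1(\Si)}$, and that $\PNP$ assigns to the descended EBE solution exactly the Higgs bundle attached to $\rho|_{\pi_1(\Si)}$. Both follow from $A_1\equiv 0$, the product decomposition $\pi_1(S^1\ti\Si)=\pi_1(S^1)\ti\pi_1(\Si)$, and the definition of $\PNP$; everything else is routine assembly on top of Theorems \ref{S1invariant} and \ref{Thm_KobayashiHitchinNP}.
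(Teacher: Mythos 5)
Your proposal is correct and follows the same route as the paper: reduce to the extended Bogomolny equations via Theorem \ref{S1invariant} and then invoke the Kobayashi--Hitchin correspondence of Theorem \ref{Thm_KobayashiHitchinNP}. The paper's proof is just these two citations in one line; your extra care with Lemma \ref{limitflatSLCconnection} and the bijection $\pi^{\st}$ fills in details the paper leaves implicit but does not change the argument.
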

\begin{proof}
  By Theorem \ref{S1invariant}, all Nahm pole solutions are $S^1$ invariant and thus satisfy the \EBE and the statement then follows from
  Theorem \ref{Thm_KobayashiHitchinNP}.
\end{proof}

\subsection{Case 4: Knots}
Suppose now that the Nahm pole boundary condition has an additional singularity along the 
knot $K=\cup_iK_i$ where $K_i=S^1\ti \{p_i\}\subset S^1\ti\Si$ with weight $\nk_i=(k_1^i,\cdots, k_{n-1}^i)$.
\begin{theorem}
\label{InvariantKnotstatement}
There is no solution $(A,\Phi)$ to the KW equations over $S^1\ti S^2 \ti\RP_y$ satisfying the Nahm pole boundary conditions
with knots $K_i$ and weight $\nk_i$, and which converges to a flat $\SL(n,\mathbb{C})$ connection as $y \to \infty$.

On the other hand, solutions to these equations with these boundary and asymptotic conditions on $S^1 \ti \Si \ti \RP$ exist
when $g(\Si) > 1$ if and only if there exists a line subbundle $L \subset \ME$, where $(\ME, \vp)$ is the Higgs data
corresponding to the flat bundle at infinity, such that $\mfd (\ME,\vp,L)=\{(p_i,\nk_i)\}$.
\end{theorem}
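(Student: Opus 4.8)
The plan is to derive both assertions from results already in hand: the $S^1$-invariance theorem (Theorem~\ref{S1invariant}) and the Kobayashi--Hitchin correspondence with knot data (Theorem~\ref{Thm_KobayashiHitchinKnot}). First I would apply Theorem~\ref{S1invariant}: any solution $(A,\Phi)$ of the KW equations on $S^1\ti\Si\ti\RP$ satisfying the Nahm pole boundary condition with knot singularities along $K=\cup_i(S^1\ti\{p_i\})\ti\{0\}$ of weight $\nk_i$ and converging to a flat $\SL(n,\CC)$ connection is $S^1$-invariant with $A_1\equiv0$, hence descends to a solution $(A,\phi,\phi_1)$ of the \EBE on $\Si\ti\RP$ satisfying the Nahm pole boundary condition with knot data $D=\{(p_i,\nk_i)\}$; moreover the corollary following Theorem~\ref{S1invariant} gives that $\pi^{\st}\colon\MM^{\mEBE}_{\NPK}\to\MM^{\mKW}_{\NPK}$ is a bijection. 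So it suffices to work with the \EBE moduli space, and I want to stress that the genuine work behind this reduction --- the vanishing of the boundary contributions $\int\chi$, in particular near a knot where $\phi$ blows up like $R^{-1}$ --- was already carried out in Section~4.

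For the case $\Si=S^2$: by the reduction, such a solution would descend to an \EBE solution on $S^2\ti\RP$ with a knot singularity, which via the $\MD_1,\MD_2$-part of \eqref{Eq_EBE_Hermitian} produces a Higgs bundle $(\ME_y,\vp_y)$ on each slice $\Si_y$, all mutually isomorphic under $\MD_3$-parallel transport (and isomorphic in the limit to the polystable Higgs pair attached to the limiting flat connection by Theorem~\ref{nonabelianhodge}). Near the knot the solution is modelled on $(A^{\md}_{\nk_i},\Phi^{\md}_{\nk_i})$, whose Higgs field is not identically zero --- it vanishes only to finite order at the $p_i$ --- so $\vp\not\equiv0$. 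But over $S^2$ there is no polystable $\SL(n,\CC)$ Higgs bundle with nonzero Higgs field (Theorem~\ref{nonabelianhodge}, since $\pi_1(S^2)=0$), exactly as in the knotless $S^2$ case, a contradiction.

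For the case $g(\Si)>1$: let $(\ME,\vp)$ be the Higgs pair corresponding to the limiting flat connection, which is stable when that connection is irreducible (Theorem~\ref{nonabelianhodge}). By the reduction above, a KW solution with the prescribed knot data and asymptotics exists if and only if an \EBE solution on $\Si\ti\RP$ with knot data $D=\{(p_i,\nk_i)\}$ converging to $(\ME,\vp)$ exists; by Theorem~\ref{Thm_KobayashiHitchinKnot} this holds precisely when there is a holomorphic line subbundle $L\subset\ME$ with $\mfd(\ME,\vp,L)=\{(p_i,\nk_i)\}$, which is the assertion.

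I do not expect a real obstacle here: this last theorem is essentially an assembly of Theorem~\ref{S1invariant} and Theorem~\ref{Thm_KobayashiHitchinKnot}. The one bookkeeping point needing a sentence of care is that the weight $\nk_i$ attached to the three-dimensional knot $K_i=S^1\ti\{p_i\}$ in the KW problem is literally the same $(n-1)$-tuple as the weight attached to the point $p_i$ in the \EBE problem; this is immediate from the two definitions, since the $S^1$-invariant model near $K_i$ on $S^1\ti\Si\ti\RP$ is the pullback of Mikhaylov's model $(A^{\md}_{\nk_i},\Phi^{\md}_{\nk_i})$ near $p_i$. A caveat worth recording is that Theorem~\ref{Thm_KobayashiHitchinKnot} is stated for stable $(\ME,\vp)$, so the equivalence in the second assertion is proved under the hypothesis that the limiting flat connection is irreducible; the polystable case is left open, consistent with the remark following Theorem~\ref{Thm_KobayashiHitchinKnot}.
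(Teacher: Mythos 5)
Your proposal is correct and follows essentially the same route as the paper: reduce to the extended Bogomolny equations via Theorem \ref{S1invariant}, rule out $S^2$ because there are no Higgs bundles with nonvanishing Higgs field over the sphere, and quote Theorem \ref{Thm_KobayashiHitchinKnot} for $g(\Si)>1$. The extra bookkeeping you record (matching the weights $\nk_i$ across the two formulations, and the stability/irreducibility caveat) is consistent with the paper's hypotheses and does not change the argument.
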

\begin{proof}
  By Proposition \ref{S1invariant}, solutions in either case are necessarily $S^1$-invariant.  As there
  are no Higgs bundles with non-vanishing Higgs field over $S^2$, there is no solution over $S^1\ti S^2\ti \RP$. The rest of the statement is just Theorem \ref{Thm_KobayashiHitchinKnot}.
\end{proof}

\begin{corollary}
  Let $\rho$ be an irreducible flat $\SL(n,\CC)$ connection. Then there exists at most $n^{2g}$ solutions to the KW
  equations satisfying Nahm pole boundary condition with a knot singularity along $K$ at $y=0$ and which converges
  to $\rho$ in the cylindrical end.
\end{corollary}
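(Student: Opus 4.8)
The plan is to combine the $S^1$-invariance proved above with the Kobayashi--Hitchin correspondence for the \EBE, and then to reduce the count to a purely algebro-geometric statement about $n$-th roots of a fixed line bundle. By Theorem \ref{S1invariant}, every KW solution with the stated boundary conditions is $S^1$-invariant and descends to a solution of the \EBE on $\Si\ti\RP$ satisfying the Nahm pole boundary condition with knot data $D=\{(p_i,\nk_i)\}$ and converging to the flat connection determined by $\rho$. Since $\rho$ is irreducible, the associated Higgs bundle $(\ME,\vp)$ is stable, and Theorem \ref{Thm_KobayashiHitchinKnot} identifies the set of such solutions with the set of $\MGC$-orbits of triples $(\ME,\vp,L)$, where $L\subset\ME$ is a holomorphic line subbundle with $\mfd(\ME,\vp,L)=D$. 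As $(\ME,\vp)$ is stable, $\mathrm{Aut}(\ME,\vp)$ is the center of $\SL(n,\CC)$, which acts trivially on subbundles, so it is enough to show that there are at most $n^{2g}$ line subbundles $L\subset\ME$ with $\mfd(\ME,\vp,L)=D$.

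The key observation is that each such $L$ is an $n$-th root of one and the same line bundle. For a triple $(\ME,\vp,L)$ consider the top ``Wronskian''
\[
f_n=1\we\vp\we\cdots\we\vp^{\,n-1}\in H^0\big(\Si;\,L^{-n}\otimes\textstyle\bigwedge^n\ME\otimes K^{n(n-1)/2}\big).
\]
Since $\det\ME$ is trivial we have $\bigwedge^n\ME\cong\MO$, so $f_n\in H^0(\Si;\,L^{-n}\otimes K^{n(n-1)/2})$, and by the definition of $\mfd(\ME,\vp,L)$ its divisor of zeros is the fixed effective divisor $D_0=\sum_i(\sum_j k_j^i)\,p_i$ attached to the knot. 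Hence $L^{-n}\otimes K^{n(n-1)/2}\cong\MO(D_0)$, that is,
\[
L^{n}\cong N:=K^{n(n-1)/2}(-D_0),
\]
a line bundle depending only on $\Si$ and the knot data. The set of isomorphism classes of $n$-th roots of $N$ is either empty or a torsor over the $n$-torsion subgroup $\mathrm{Jac}(\Si)[n]\cong(\mathbb{Z}/n\mathbb{Z})^{2g}$, and so has at most $n^{2g}$ elements. (If this set is empty there are no solutions; since $\deg N\equiv-\deg D_0\pmod n$, this in particular recovers the non-existence statement when $n\nmid\deg D$.)

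It remains to bound the number of subbundles $L\subset\ME$ lying in a fixed isomorphism class and satisfying $\mfd(\ME,\vp,L)=D$; the claim is that there is at most one. Given two such, the inclusions are nowhere-vanishing sections $s_1,s_2\in H^0(\Si;L^{-1}\otimes\ME)$, and the condition $\mfd=D$ prescribes the zero divisors of all of the Wronskians $f^{(k)}_j=s_k\we\vp s_k\we\cdots\we\vp^{\,j-1}s_k$, $j=1,\dots,n$. For $j=n$ the target is the line bundle $\MO(D_0)$, so $f^{(2)}_n=c\,f^{(1)}_n$ for a nonzero constant $c$; since $f^{(1)}_n$ vanishes only on $D_0$, the frame $\{s_1,\vp s_1,\dots,\vp^{\,n-1}s_1\}$ trivializes $\ME$ away from $D_0$, and one propagates this proportionality down the flag $L\subset L+\vp L\subset\cdots$, using $\ad(\vp)$-equivariance, the Cayley--Hamilton relation for $\vp$, and stability of $(\ME,\vp)$, to conclude that $s_2$ is a constant multiple of $s_1$, i.e.\ $L_1=L_2$. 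Alternatively, this uniqueness can be read off from the construction in \cite{MazzeoHe18}, where the line subbundle attached to an \EBE solution is pinned down by the limiting flat connection together with the choice of root $L$. This last step --- uniqueness of the subbundle given its isomorphism class and the full degeneracy data $D$ --- is the main obstacle; the remaining steps are formal consequences of the results recalled above.
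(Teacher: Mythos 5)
Your argument reproduces the paper's proof for everything the paper actually establishes: reduce via Theorem \ref{S1invariant} and Theorem \ref{Thm_KobayashiHitchinKnot} to counting line subbundles $L\subset\ME$ with $\mfd(\ME,\vp,L)=D$, observe that $Z(f_n)=D_0$ forces $L^{n}\cong K^{n(n-1)/2}(-D_0)$, and conclude that the isomorphism class of $L$ is determined up to twisting by an $n$-torsion line bundle, of which there are $n^{2g}$. Up to that point you and the paper agree completely; the paper then simply asserts ``there are $n^{2g}$ choices of $N$, hence $n^{2g}$ possible solutions.''

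Where you diverge is that you correctly notice this is not yet a count of solutions: since $\mathrm{Aut}(\ME,\vp)$ is only the center for stable $(\ME,\vp)$, the correspondence counts \emph{subbundles} of $\ME$, not isomorphism classes of abstract line bundles, so one must still show that a fixed isomorphism class admits at most one embedding into $\ME$ realizing the data $D$. The paper does not address this step at all, so you have missed nothing it supplies. However, your sketch of that step does not go through. Proportionality $f^{(2)}_n=c\,f^{(1)}_n$ only matches the top exterior powers of the two flags; it does not propagate down to $s_2=c's_1$. Concretely, for $n=2$ take $L=K^{-1/2}$, $\ME=L\oplus L^{-1}$, a nonzero $b_0\in H^0(K)$, and
\[
\vp=\begin{pmatrix}0&1\\ b_0^{2}&0\end{pmatrix},
\]
which is stable since its only $\vp$-invariant line subbundles have degree $-(g-1)<0$. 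The subbundles $L_t=\mathrm{im}(1,\,t b_0)\subset\ME$ with $t^2\neq1$ are all isomorphic to $L$, and all satisfy $Z(f_1)=0$ and $Z(f_2)=Z\bigl((1-t^2)b_0^2\bigr)=2Z(b_0)$, hence carry the same data set while $s_t$ is not a multiple of $s_0$. So ``propagating the proportionality down the flag'' cannot work in general: either some input beyond stability and the divisor data singles out the subbundle, or the corollary must be read as counting isomorphism classes of $L$ --- which is all that either your argument or the paper's actually proves. You were right to flag this as the main obstacle; just do not present the sketch as if it closes it.
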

\begin{proof}
Denote by $(\ME,\vp)$ the Higgs bundle corresponding to $\rho$. By Theorem \ref{InvariantKnotstatement}, existence
of a solution is equivalent to the existence of a line bundle $L\subset \ME$ for which 
$\mfd(\ME,\vp,L)=\{p_i,\nk_i=(k^i_1,\cdots,k^i_{n-1})\}$. The knot data determines the divisor $D=\sum_{i}p_i(\sum_{j=1}^{n-1}k_j^i)$,
and we have $Z(f_n) = D$ where $f_n:=1\we\vp\we\cdots \we \vp^{n-1}$.
If $L_D$ is the line bundle associated to $D$, then $L^n=L_D^{-1}\otimes K^{\frac{n(n-1)}{2}}$.   However, this determines $L$
only up an $n^{\mathrm{th}}$ root of unity: if $N$ is any line bundle with $N^n=\MO$, then $(L\otimes N)^n=L^n$.
There are $n^{2g}$ choice of $N$, hence $n^{2g}$ possible solutions. However, it is not necessarily the case that 
each $(L \otimes N)^j$ is a subbundle of $\ME$, so there may not be $n^{2g}$ actual solutions.
\end{proof}

\begin{theorem}
  Let $D=\sum_{i}p_i(\sum_{j=1}^{n-1}k_j^i)$ be the divisor determined by the given knot data. If $\deg D$ is not divisible
  by $n$, then there exists no solution. 
\end{theorem}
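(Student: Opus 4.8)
The plan is to reduce the problem to the extended Bogomolny equations and then extract a divisibility constraint from the holomorphic line bundle identity that underlies the data set $\mfd(\ME,\vp,L)$. First I would apply Theorem~\ref{S1invariant}: any solution of the KW equations on $S^1\ti\Si\ti\RP$ satisfying the Nahm pole boundary condition with knot $K$ and converging to a flat $\SL(n,\CC)$ connection is $S^1$-invariant, hence descends to a solution of the \EBE on $\Si\ti\RP$. For $g(\Si)=0$ there is nothing to prove, since there are already no solutions over $S^1\ti S^2\ti\RP$ (Theorem~\ref{InvariantKnotstatement}); following the Remark we do not treat $g(\Si)=1$; and when $g(\Si)>1$ and the limiting flat connection is irreducible, it corresponds to a stable Higgs pair $(\ME,\vp)$ and Theorem~\ref{Thm_KobayashiHitchinKnot} converts the existence of a solution into the existence of a holomorphic line subbundle $L\subset\ME$ with $\mfd(\ME,\vp,L)=\{(p_i,\nk_i)\}$.

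The second step is a one-line degree count on such an $L$. The section $f_n:=1\we\vp\we\cdots\we\vp^{n-1}$ lies in $H^0\big(\Si;\,L^{-n}\otimes\we^n\ME\otimes K^{\frac{n(n-1)}{2}}\big)$ and is not identically zero (this is built into the definition of a data set); since $\det\ME=\MO$ forces $\we^n\ME\cong\MO$, it is therefore a nonzero holomorphic section of $L^{-n}\otimes K^{\frac{n(n-1)}{2}}$ whose zero divisor is $Z(f_n)=\sum_i p_i\sum_j k^i_j=D$ — the relation already recorded in the proof of the preceding Corollary, which gives $L^n\cong\MO(-D)\otimes K^{\frac{n(n-1)}{2}}$. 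Taking degrees and using $\deg K=2g-2$,
\[
\deg D=-n\deg L+\tfrac{n(n-1)}{2}(2g-2)=n\big((n-1)(g-1)-\deg L\big),
\]
so $n\mid\deg D$. The theorem follows by contraposition, and the special case is immediate: for $\mathrm{SU}(2)$ (that is, $n=2$) with a single knot of weight $1$ we have $D=p_1$, hence $\deg D=1$, which is not divisible by $2$, so no such Nahm pole solution exists.

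I do not expect a genuine obstacle: all of the analytic work is already packaged in Theorem~\ref{S1invariant} and Theorem~\ref{Thm_KobayashiHitchinKnot}, and the remaining content is the computation with $\deg K$. The only places that merit a word of care are the verification that $f_n\not\equiv 0$ — which is exactly part of what it means for $\mfd(\ME,\vp,L)$ to be a well-defined data set equal to the prescribed $\{(p_i,\nk_i)\}$ — and the identification $Z(f_n)=D$, obtained by telescoping $Z(f_j)-Z(f_{j-1})=\sum_i k^i_j p_i$ starting from $f_1$, the nowhere-vanishing inclusion $L\hookrightarrow\ME$ with $Z(f_1)=0$. One might also wish to remark that the argument is stated for an irreducible limiting flat connection; the polystable case is not covered by the correspondence theorems and, consistent with the Remark, is left aside.
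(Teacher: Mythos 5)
Your proposal is correct and follows essentially the same route as the paper: reduce to the extended Bogomolny equations, invoke the Kobayashi--Hitchin correspondence to obtain a line subbundle $L\subset\ME$ with $L^n\cong L_D^{-1}\otimes K^{\frac{n(n-1)}{2}}$, and take degrees to get $\deg D=-n\deg L+n(n-1)(g-1)$. The only difference is that you spell out the telescoping identification $Z(f_n)=D$ and the reduction steps more explicitly than the paper, which simply cites the computation from the preceding corollary.
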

\begin{proof}
Let $L_D$ be the line bundle associated to $D$ and $(\ME,\vp)$ the Higgs bundle determined by $\rho$. Suppose there exists
a solution; then there exists a subbundle $L \subset \ME$ such that $L^n=L_D^{-1}\otimes K^{\frac{n(n-1)}{2}}$. Therefore,
$\deg D=-n\deg(L)+n(n-1)(g-1)$, so $n$ divides $\deg D$. 
\end{proof}

\begin{corollary}
  Let $K=S^1\ti \{p\}\subset S^1\ti\Si$ with weight $1$ and suppose $g(\Si)>1$. Then there is no $\mathrm{SU}(2)$ solution
  to the KW equations with Nahm pole singularity and knot $K$. 
\end{corollary}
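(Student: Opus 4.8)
The plan is to obtain this as an immediate specialization of the divisibility theorem proved just above. First I would unwind the definitions for $G=\mathrm{SU}(2)$, so that $n=2$, and for a knot consisting of the single component $K=S^1\ti\{p\}$ carrying weight $\nk=(k_1)$ with $k_1=1$ (the $(n-1)$-tuple here has length one). According to the prescription $D=\sum_i p_i\big(\sum_{j=1}^{n-1}k_j^i\big)$, the divisor attached to this knot data is simply $D=p$, hence $\deg D=1$.

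Next I would simply observe that $n=2$ does not divide $\deg D=1$, and invoke the preceding theorem: if $\deg D$ is not divisible by $n$, there is no Nahm pole solution to the KW equations with knot singularity along $K$ converging to the given flat connection. For completeness I would recall why that theorem applies: by Theorem \ref{S1invariant} any such solution is $S^1$-invariant and reduces to a solution of the \EBE; then Theorem \ref{Thm_KobayashiHitchinKnot} produces a line subbundle $L\subset\ME$ of the limiting Higgs bundle with $\mfd(\ME,\vp,L)=\{(p,\nk)\}$, which in turn forces the isomorphism of line bundles $L^n\cong L_D^{-1}\otimes K^{\frac{n(n-1)}{2}}$; taking degrees gives $\deg D=-n\deg L+n(n-1)(g-1)$, so $n\mid\deg D$, a contradiction with $\deg D=1$, $n=2$. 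This yields the nonexistence claim, and the hypothesis $g(\Si)>1$ is used precisely to be in the regime where Theorem \ref{Thm_KobayashiHitchinKnot} is available.

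There is essentially no analytic obstacle here; the only step warranting care is a bookkeeping one, namely confirming the normalization of the word ``weight $1$'' for $\mathrm{SU}(2)$. One must check that it refers to the single integer $k_1=1$ entering the $1$-tuple $\nk$, matching the model solution $(A,\Phi)$ of weight $k$ displayed in Section~2 with $k=1$, so that the divisor formula indeed returns $\deg D=k_1=1$ rather than some rescaled value. Once that convention is fixed, the corollary follows with no further argument.
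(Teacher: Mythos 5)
Your proposal is correct and is exactly the argument the paper intends: the corollary is an immediate specialization of the preceding divisibility theorem with $n=2$, $D=p$, $\deg D=1$, and $2\nmid 1$. Your recap of why the theorem applies (via Theorem \ref{S1invariant} and Theorem \ref{Thm_KobayashiHitchinKnot}) and your check of the weight convention match the paper's conventions, so there is nothing to add.
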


We now focus on the special case where $\rho$ lies in one of the ``non-Hitchin'' components of $\SL(2,\mathbb{R})$
Higgs bundles.  These components are described as follows. Let $\ell$ be a line bundle with $0<\deg \ell <g-1$ and
consider the stable Higgs bundle
$$
\ME=\ell^{-1}\oplus \ell,\ \vp=\begin{pmatrix}
0& \al\\
\beta& 0
\end{pmatrix},
$$
where $\al\in H^{0}(\ell^{-2}\otimes K)$ and $\beta\in H^{0}(\ell^2\otimes K)$ are nontrivial sections. Then the zeroes
of $f_2:=1\we\vp: \ell^2\to K$ coincide with those of $\al$, and the number of zeroes counted with multiplicity equals
$2g-2-2\deg \ell$. 
	
\begin{proposition}
With all notation as above, fix the knot data $D=\sum_i p_i k_i$.
\begin{itemize}
\item [(i)] If $\deg D=2g-2-2\deg \ell$, then there exists a unique Nahm pole solution if and only if $D=\al$ and no
  solution otherwise;
\item [(ii)] if $2g-2>\deg D>2g-2-2\deg \ell$, there is no solution.
\end{itemize}
\end{proposition}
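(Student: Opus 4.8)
The plan is to run everything through the Kobayashi--Hitchin correspondence of Theorem~\ref{Thm_KobayashiHitchinKnot} (equivalently Theorem~\ref{InvariantKnotstatement}): a Nahm pole solution with knot divisor $D=\sum_i k_ip_i$ converging to the flat connection attached to the stable pair $(\ME,\vp)$ exists if and only if there is a holomorphic line \emph{subbundle} $L\subset\ME$ with $\mfd(\ME,\vp,L)=\{(p_i,(k_i))\}$; and since a stable pair is simple and scalars fix every subbundle, the bijection of Theorem~\ref{Thm_KobayashiHitchinKnot} identifies the solutions converging to $(\ME,\vp)$ with the set of such subbundles $L$. For $n=2$ the data-set condition unwinds to $Z(f_1)=0$ (automatic, since $f_1$ is the subbundle inclusion $L\hookrightarrow\ME$ and so is nowhere vanishing --- this is exactly where it matters that $L$ is a subbundle, not merely a subsheaf) together with $Z(f_2)=D$, where $f_2=1\we\vp\in H^0(L^{-2}\otimes\we^2\ME\otimes K)=H^0(L^{-2}\otimes K)$ using $\det\ME=\mathcal O$. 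Note that $\mfd$ is only defined when $f_2\not\equiv 0$, i.e.\ when $L$ is not $\vp$-invariant, which is automatic once $\deg L>0$ by stability.

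First I would extract the two consequences of $Z(f_2)=D$. Numerically, $L^{-2}\otimes K\cong\mathcal O(D)$, hence $\deg L=\tfrac12(2g-2-\deg D)$, which in particular must be an integer. Then, once $L$ is pinned down, I would compute $f_2$ explicitly: for $L=\ell$ the inclusion is the second summand $\ell\hookrightarrow\ell^{-1}\oplus\ell$, and $\vp$ carries it into $\ell^{-1}\otimes K$ via $\al$, so under $\we^2\ME\cong\mathcal O$ one gets $f_2=\pm\al$; thus $Z(f_2)$ is exactly the zero divisor of $\al$, which is the identification already recorded in the paragraph preceding the proposition.

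Second I would classify line subbundles of $\ME=\ell^{-1}\oplus\ell$ by degree. Giving a line subbundle $M\subset\ME$ is the same as giving a nowhere-vanishing section $(\varphi,\psi)$ of $M^{-1}\ell^{-1}\oplus M^{-1}\ell$. If $\deg M>-\deg\ell$ then $M^{-1}\ell^{-1}$ has negative degree, so $\varphi\equiv 0$, and then $\psi$ is a nowhere-vanishing section of $M^{-1}\ell$, forcing $M\cong\ell$ (realized as the second summand). Hence every line subbundle of $\ME$ has degree $\le-\deg\ell$ or is equal to $\ell$; in particular $\ME$ has no line subbundle of degree strictly between $-\deg\ell$ and $\deg\ell$.

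Now I assemble the two parts. In case (i), $\deg D=2g-2-2\deg\ell$ gives $\deg L=\deg\ell$, so $L=\ell$ by the classification --- there is at most one candidate subbundle, hence at most one solution. By the second step, $\mfd(\ME,\vp,\ell)=\{(p_i,(k_i))\}$ holds precisely when $D$ is the zero divisor of $\al$, i.e.\ $D=\al$; in that case a solution exists and is unique, and otherwise (same degree, different divisor) there is none. In case (ii), $2g-2>\deg D>2g-2-2\deg\ell$ forces $0<\deg L<\deg\ell$ for any putative $L$ (and if $\deg D$ is odd this is already impossible since $\deg L\notin\mathbb Z$, or one may instead quote the divisibility theorem above); but the classification shows $\ME$ has no line subbundle of such degree, so no solution exists. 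The computations are all short and the structure is essentially forced; the only point that requires care --- and hence the main (minor) obstacle --- is keeping the subbundle/subsheaf distinction straight, since it is used both in making $f_1$ nowhere vanishing (so that $\mfd$ is literally $Z(f_2)$) and in the degree classification of the third step, after which everything is bookkeeping with degrees and the explicit shape of $\vp$.
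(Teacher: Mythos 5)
Your proposal is correct and follows essentially the same route as the paper's proof: reduce via Theorem \ref{InvariantKnotstatement} to the existence of a line subbundle $L\subset\ME$ with $L^2=L_D^{-1}\otimes K$, read off $\deg L=\tfrac12(2g-2-\deg D)$, identify $L$ with $\ell$ and $f_2$ with $\al$ in case (i), and rule out case (ii) by degree considerations. The only cosmetic difference is that you classify line subbundles of $\ell^{-1}\oplus\ell$ by a direct section argument, where the paper invokes the general fact that a rank-$2$, degree-$0$ bundle admits at most one line subbundle of positive degree; both justifications are valid, and your extra care with the subbundle/subsheaf distinction and with $f_1$ being nowhere vanishing only makes the argument more complete.
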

\begin{proof}
With $L_D$ the line bundle for $D$, by Theorem \ref{InvariantKnotstatement} the necessary condition for
existence of a Nahm pole solution is that there exists $L\subset \ME$ such that $L^2=L_D^{-1}\otimes K$.
For (i), if $\deg (D)=2g-2-2\deg \ell$, then $\deg L=\deg \ell$.  However, since $\ME$ has rank $2$ and $\deg\;\ME=0$,
there is a unique subbundle of positive degree, so $L= \ell$. By the form of the Higgs bundle, we conclude that
$D=\al$. For (ii), if $2g-2>\deg D>2g-2-2\deg \ell$, if there is solution with line bundle $L$, then $0<\deg L<\deg \ell$,
which is impossible.
\end{proof}

\bibliographystyle{plain}
\bibliography{references}
\end{document}